\tikzset{
    >=stealth',
    pil/.style={
           ->,
           thick,
           shorten <=2pt,
           shorten >=2pt,}
}
\numberwithin{equation}{section}
\def \be{\begin{equs}}
\def \ee{\end{equs}}
\def \P{\mathbb{P}}
\def \E{\mathbb{E}}
\def \gd{G}
\def \bd{B}
\def \cov{W}
\newtheorem{theorem}{Theorem}[section]
\newtheorem{lemma}[theorem]{Lemma}
\newtheorem{prop}[theorem]{Proposition}
\newtheorem{assumptions}[theorem]{Assumptions}
\newtheorem{assumption}[theorem]{Assumption}
\theoremstyle{plain}
\newtheorem{thm}{Theorem}
\newtheorem*{thm-non}{Theorem}
\newtheorem{cor}[thm]{Corollary}
\theoremstyle{definition}
\newtheorem{defn}[theorem]{Definition}
\newtheorem{remark}[theorem]{Remark}
\begin{document}

\title[HMC on Multimodal Densities]{Does Hamiltonian Monte Carlo mix faster than a random walk on multimodal densities?}


\author{Oren Mangoubi$^{\flat}$}
\thanks{$^{\flat}$oren.mangoubi@gmail.com, 
\'{E}cole Polytechnique F\'{e}d\'{e}rale de Lausanne (EPFL), 
IC IINFCOM THL3,
1015 Lausanne, Switzerland}

\author{Natesh S. Pillai$^{\ddag}$}
\thanks{$^{\ddag}$pillai@fas.harvard.edu, 
   Department of Statistics,
    Harvard University, 1 Oxford Street, Cambridge
    MA 02138, USA}

\author{Aaron Smith$^{\sharp}$}
\thanks{$^{\sharp}$smith.aaron.matthew@gmail.com, 
   Department of Mathematics and Statistics,
University of Ottawa, 585 King Edward Avenue, Ottawa
ON K1N 7N5, Canada}

 \thanks{NSP gratefully acknowledges the support of ONR.   AS was supported by an NSERC Discovery grant.  OM was supported by NSF-DMS 1312831 and by an NSERC Discovery grant.}

\maketitle







\begin{abstract}
Hamiltonian Monte Carlo (HMC) is a very popular and generic collection of Markov chain Monte Carlo (MCMC) algorithms. One explanation for the popularity of HMC algorithms is their excellent performance as the dimension $d$ of the target becomes large: under conditions that are satisfied for many common statistical models, optimally-tuned HMC algorithms have a running time that scales like $d^{0.25}$. In stark contrast, the running time of the usual Random-Walk Metropolis (RWM) algorithm, optimally tuned, scales like $d$. This superior scaling of the HMC algorithm with dimension is attributed to the fact that it, unlike RWM, incorporates the gradient information in the proposal distribution. In this paper, we investigate a different scaling question: does HMC beat RWM for highly \textit{multimodal} targets? We find that the answer is often \textit{no}. We compute the spectral gaps for both the algorithms for a specific class of multimodal target densities, and show that they are identical. The key reason is that, within one mode, the gradient is effectively ignorant about other modes, thus negating the advantage the HMC algorithm enjoys in unimodal targets. We also give heuristic arguments suggesting that the above observation may hold quite generally. Our main tool for answering this question is a novel simple formula for the conductance of HMC using Liouville's theorem. This result allows us to compute the spectral gap of HMC algorithms, for both the classical HMC with isotropic momentum and the recent Riemannian HMC, for multimodal targets. 
\end{abstract}

\section{Introduction}
Markov chain Monte Carlo (MCMC) algorithms, and in particular the Hamiltonian Monte Carlo (HMC) algorithms, are workhorses in many scientific fields including physics \cite{Hybrid_MCMC}, statistics  and machine learning \cite{NUTS,Riemannian_HMC2, MCMC_Application_Machine_Learning,welling2011bayesian, mangoubi2018convex}, and molecular biology \cite{MCMC_Application_molecular_biology}. An important question practitioners face is how to choose an algorithm for a \textit{particular} problem? Are there any general principles that allow users to prefer one algorithm to another for \textit{generic} but \textit{simple} problems? One of the most fruitful branches of MCMC theory has focused on analyzing MCMC algorithms by studying their \textit{scaling limits.} The idea is to study how an MCMC algorithm's speed depends on some underlying parameter, often the dimension $d$ of the target distribution. Results from the theory of scaling limits suggest that HMC often has a running time of  $O(d^{0.25})$ \cite{beskos2013optimal} whereas  the Random-Walk Metropolis (RWM) algorithm has a much longer running time of $O(d)$ steps under very broad conditions \cite{roberts1997weak,mps12,bed07}. These results and other evidence, both empirical and theoretical (see \textit{e.g.,}  \cite{betancourt2017conceptual,mangoubi2017rapidp1,rabee2018HMCcoup, mangoubi2016rapid}) have lead to the widespread understanding that HMC is superior to RWM for a wide variety of problems. In this paper, we investigate the extent to which this superiority holds in another natural scaling regime: highly multimodal target distributions. There is no a priori reason to believe that algorithms superior in one regime are also superior in other regimes.

 To give a simple example, consider the mixture of Gaussians
\be \label{eqn:gausmix}
\frac{1}{2} \mathcal{N}(-1,\sigma^2) + \frac{1}{2} \mathcal{N}(1,\sigma^2).
\ee 
A natural regime to study is when $\sigma \rightarrow 0$. Let us tune both algorithms to optimize their performance within a single mode. To this end, for the RWM algorithm, results of \cite{roberts1997weak} (and also simple scale-invariance of the mixture components) imply that the proposal variance must be $O(\sigma^2)$. For the classical HMC algorithm  \cite{Hybrid_MCMC} with isotropic momentum, we set the integration length to be $O(\sigma)$; this is the first time we would expect the algorithm to take a U-turn (and is also suggested by scale-invariance of the mixture components). For the above example, with these tuning parameters, Theorem \ref{ThmHmcMultimodal} of this paper and Theorem 3 of our companion paper \cite{mangoubi2018simple} imply that the spectral gap of HMC and RWM \emph{both} decay exactly like $e^{-\frac{1}{2} \sigma^{-2}}$  as $\sigma \rightarrow 0$. Of course, we expect that the spectral gap goes to 0 quickly in both cases, just as it does in the high-dimensional scaling regime of \cite{roberts1997weak,beskos2013optimal}. The interesting fact is that they go to 0 at the \textit{same rate}, so that the asymptotic performance of the two algorithms is the same. For this example, instead of isotropic momentum, if we choose the metric to be the inverse of the Fisher information as in the Reimannian HMC of \cite{Riemannian_HMC2}, the spectral gap still decays like $e^{-\frac{1}{2} \sigma^{-2}}$  as $\sigma \rightarrow 0$.  Thus, our main conclusion is that, for this example is that HMC is \emph{not} better than RWM. 
Finally, the above results also hold for the high dimensional analog of \eqref{eqn:gausmix}; see Section \ref{sec:highdim}.

It is natural to ask if this comparison is a result of bad tuning. In fact, this is not the case. Tuning the integration length of HMC cannot improve its relative performance. The natural tuning for HMC used above is essentially optimal (in the sense of maximizing the effective sample size per unit computation; see Section \ref{sec:RemChoiceInt} for a more detailed discussion of tuning parameters and optimality). We give a further discussion of scaling results, as well as related open questions, in Section \ref{SecCompMulti}. We highlight here one fact from  Section \ref{SecCompMulti} that is particularly interesting for our simple target distribution \eqref{eqn:gausmix}: considering other tuning parameters makes HMC look even worse than RWM. More precisely, the computational cost of the HMC algorithm is \textit{always} at least on the order of $e^{-\frac{1}{2} \sigma^{-2}}$ even for much longer integration times, while well-chosen tuning parameters can make RWM much more efficient (again detailed in Section  \ref{sec:RemChoiceInt} and Section \ref{SecCompMulti}).

Readers and practitioners familiar with the MCMC literature may be surprised at the tone of the above paragraph, since HMC is generally viewed as having much better performance than many older and simpler random walk based MCMC algorithms  \cite{betancourt2017conceptual,roberts1997weak,beskos2013optimal,mangoubi2017concave,rabee2018HMCcoup}. However, the main heuristics justifying the dominance of HMC in the high-dimensional regime do not apply in our highly-multimodal regime. In particular, HMC can use knowledge of the gradient of the log-likelihood to make larger moves than RWM in the high-dimensional regime, but the gradient cannot detect multimodality. 

\subsection{Conductance Results}
One of the main tools we use in our calculations is \textit{Cheeger's inequality}. This well known tool provides a coarse estimate of the efficiency of an MCMC algorithm in terms of a geometric quantity called the \textit{conductance} (see\cite{cheeger1970lower, lawler1988bounds} and a survey of variants \cite{montenegro2006mathematical}). It is useful primarily because it can be easier to estimate than more precise measures of efficiency. 

Unfortunately, the conductance can still be difficult to compute. One of the main contributions of this paper is a simple exact formula for the conductance of both HMC and RHMC (see Theorem \ref{thm:1}). This formula relates the conductance of a set $S$ to the integral of a function, which we interpret as its ``crossing rate," on the boundary $\partial S$ of $S$.  For the `standard' HMC algorithm that uses isotropic momentum, Theorem \ref{thm:1} (see Corollary \ref{cor:1}) yields the following bound for conductance of a set $S$:
\be 
\mathrm{Conductance}(S) \leq \frac{1}{2}T { \int_{\partial S} \pi(q) \mathrm{d}q \over \pi(S)},
\ee 
where $T$ is the integration length and the integral is over the boundary $\partial S$ of the set $S$ and $\pi$ is the target density. In particular, the above formula
yields that the conductance can increase \emph{at most} linearly with the integration time for the HMC algorithm. This is also true for RHMC. In Section \ref{sec:RemChoiceInt}, we use this observation to show that changing the integration length cannot vastly improve the performance of the HMC algorithm in multimodal regimes.

 Markov chains do not typically have conductance formulas that can easily be expressed in terms of surface integrals. As seen from above, our new formula is much easier to use.  Having a simple formula for HMC is particularly useful because HMC algorithms are some of the most widely-used \cite{Riemannian_HMC2, cheung2009bayesian, mehlig1992hybrid} modern MCMC algorithms, but their theoretical properties are not yet well-understood (though again see \cite{holmes2014curvature, livingstone2016geometric, mangoubi2017concave, rabee2018HMCcoup}). Understanding the conductance of HMC algorithms can help us understand when these algorithms perform poorly, and often suggest what has gone wrong. 

As with \textit{e.g.,} \cite{holmes2014curvature}, we begin our quantitative study of mixing times by studying ``idealized" versions of HMC. That is, we ignore the error introduced by solving Hamilton's equations with a numerical integrator such as the leapfrog integrator and instead assume that we can solve the Hamilton's ODEs exactly. We expect similar qualitative conclusions to hold for appropriate practical implementations of HMC, and there is substantial work on relating ``ideal" Monte Carlo schemes to their numerical implementations (see \textit{e.g.,}  \cite{durmus2016sampling2} \cite{livingstone2016geometric}, \cite{mangoubi2017rapidp1}, \cite{mangoubi2017rapidp2}, \cite{lee2017convergence} \cite{mangoubi2018dimensionally}).

\subsection*{When can HMC beat RWM for multimodal targets?}
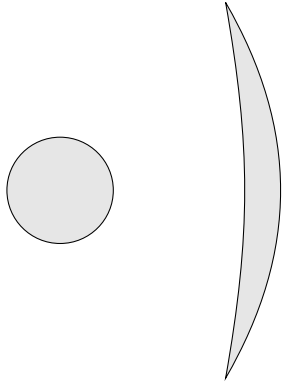
\begin{figure}
\begin{tikzpicture}[>=latex]
\coordinate (A) at (0,0) ;
\coordinate (B) at (0,-5) ;
\coordinate (C) at (4,0) ;
\coordinate (D) at (4,-5) ;

\filldraw[fill=gray!20]
 (A) to[out=-100,in=100] 
  (A)
 (C) to[out=-80,in=80] 
   coordinate[pos=0.05] (auxru)
   coordinate[pos=0.95] (auxrl)
 (D) to[out=60,in=-60] 
 (C);
\node[circle,draw = black, fill = gray!20,inner sep=0.5cm] 
  at (1.8,-2.5) (circle) {};    
  \end{tikzpicture}
  \caption{We see a small deep mode next to a `banana' shaped mode. HMC can often explore long, skinny modes much more quickly than RWM. Thus, it is possible to tune the length of the banana shaped mode, in relation to the distance between the centers of the banana and the circle, so that the time for RWM to mix on the long mode is much larger than the time to escape the small mode, while the time for HMC to mix on the long mode is much smaller than the time to escape the small mode. In this case, the HMC algorithm can exhibit metastability while the RWM algorithm does not; we would expect HMC to mix more quickly than RWM in this situation.} \label{FigBanana}
  \end{figure}

We envision that there may be some classes of highly multimodal target densities where the HMC algorithm is superior to random walk based algorithms. One such example is shown in Figure \ref{FigBanana}. The basic idea is that HMC can often explore long, skinny modes much more quickly than RWM. If we consider an example with several modes, at least one of which is very long and skinny, the running time for  RWM may be determined by the time it takes to traverse the long and skinny mode, whereas the running time for HMC will be determined by the time it takes to travel between the modes. This `energy-entropy' competition could lead to different performances of the two algorithms, and certainly merits further study. In particular, the HMC algorithm in this example could exhibit `metastability' while the RWM algorithm does not. In our companion paper \cite{mangoubi2018simple}, we derive simple conditions for checking the metastability of commonly used MCMC algorithms.

It is also natural to ask if any of the popular HMC variants can substantially improve the performance of HMC on multimodal targets. We believe that this is an important research question, but there are many HMC variants and a careful survey of their performance is beyond the scope of the present paper; instead we give a quick summary in relation to our results. Our main results do not apply as stated to the popular NUTS algorithm \cite{NUTS}, though we expect very similar bounds to be true. Our main conductance bounds, Theorem \ref{thm:1}  and Corollary \ref{cor:1}, do apply as stated to the Riemannian HMC (RHMC) algorithm of \cite{Riemannian_HMC2}. For the toy example \eqref{eqn:gausmix}, these bounds can be used to show that the RHMC algorithm with its ``usual" tuning (the inverse of the Fisher information as suggested in \cite{Riemannian_HMC2}) does not offer substantial improvements on the RWM algorithm with the ``usual" tuning discussed above. On the other hand, it does appear possible to use extremely unusual tuning parameters for RHMC to substantially improve performance, just as this is possible for RWM (see Section \ref{SecHmcImp} for detailed discussion of this tuning for RWM). As with RWM, there does not seem any obvious way to ``guess" these good tuning parameters without unrealistically good knowledge of the locations of the modes, and so it is not clear if the existence of good tuning parameters has any practical importance. 

\subsection{Guide to Paper}

In Section \ref{SecAlgDefs}, we review basic notation and the definitions of the MCMC algorithms that we study in this paper: an ``ideal" version of the standard and Riemannian HMC algorithms, as well as a simple Metropolis-Hastings algorithm for comparison. We then prove bounds on the conductance for HMC in Section \ref{sec:HMC}. Finally, we illustrate the uses of our results by analyzing the performance of HMC for two simple but important examples in Section \ref{SecAppl}: a mixture of Gaussians and a highly degenerate Gaussian. Finally, we discuss the consequences of our work and give related conjectures in Section \ref{SecCompMulti}. The longer proofs are deferred to the appendices.

\subsection{Basic Notation}
We review the standard ``big-O" notation, which is used throughout the paper. For two nonnegative functions or sequences $f,g$, we write $f = O(g)$ as shorthand for the statement: there exist constants $0 < C_{1},C_{2} < \infty$ so that for all $x > C_{1}$, we have $f(x) \leq C_{2} \, g(x)$. We write $f = \Omega(g)$ for $g = O(f)$, and we write $f = \Theta(g)$ if both $f= O(g)$ and $g=O(f)$. Relatedly, we write $f = o(g)$ as shorthand for the statement: $\lim_{x \rightarrow \infty} \frac{f(x)}{g(x)} = 0$.  We write $f = \tilde{O}(g)$ if there exist constants $0 < C_{1},C_{2}, C_{3} < \infty$ so that for all $x > C_{1}$, we have $f(x) \leq C_{2} \, g(x) \log(x)^{C_{3}}$, and write $f = \tilde{\Omega}(g)$ for $g = \tilde{O}(f)$. Finally, we say that a function $f$ is ``bounded by a polynomial" if there exists $0 < c < \infty$ so that $f(x) = O(x^{c})$. 
\section{Algorithms and Notation} \label{SecAlgDefs}

In this section we review two important Hamiltonian Monte Carlo algorithms, as well as the commonly-used Random Walk Metropolis algorithm.  We also review some important definitions for MCMC algorithms, including careful definitions of the conductance, spectral gap and Cheeger inequality. 

\subsection{Basic Notation}
Throughout the remainder of the paper, we denote by $\pi$ the smooth density function of a probability distribution on $\mathbb{R}^{d}$. We denote by $\mathcal{L}(X)$ the distribution of a random variable $X$. Similarly, if $\mu$ is a probability measure, we write ``$X \sim \mu$" for ``$X$ has distribution $\mu$."  Throughout, we will generically let $Q \sim \pi$ and $P \sim \mathcal{N}(0, \mathrm{Id})$ be independent random variables, where $\mathrm{Id}$ is the $d$-dimensional identity matrix.

\subsection{Random Walk Metropolis}
The Random Walk Metropolis (RWM) algorithm (Algorithm \ref{alg:RWM}) is the most basic commonly-used MCMC algorithm.  At each step $i$ of the Markov chain, the RWM algorithm proposes to take the next step $\hat{x}_{i+1}$ in a random direction and distance from the current position $x_i$.  The step is accepted with a probability of $\mathrm{min}\{\frac{\pi(\hat{x}_{i+1})}{\pi(x_{i})}, 1\}$, in which case we set $x_{i+1} = \hat{x}_{i+1}$.  Otherwise, we say that the step is rejected and we set $x_{i+1} = x_{i}$, so that the algorithm stays at its current position until the next time step.

\begin{algorithm}[H]
\caption{Random Walk Metropolis \cite{metropolis1953equation}}\label{alg:RWM}
\flushleft
\textbf{Input:} Tuning parameter $\epsilon > 0$, starting point $x_0$, target density $\pi: \mathbb{R}^{d} \rightarrow [0, \infty)$. \\
\textbf{Output:} Markov chain $x_1, x_2, \ldots$, with stationary distribution $\pi$. \\
\begin{algorithmic}[1]
\FOR{$i = 1,2,\ldots$,}
\STATE Sample $p_{i} \sim \mathcal{N}(0,\epsilon^{2})^{d}$ and $U \sim \mathrm{Unif}([0,1])$. 
\STATE Set $\hat{x}_{i+1} = x_{i} + p_{i}$.
\IF{$U < \mathrm{min}\{\frac{\pi(\hat{x}_{i+1})}{\pi(x_i)}, 1\},$}
\STATE Set $x_{i+1} = \hat{x}_{i+1}$.
\ELSE  
\STATE Set $x_{i+1} = x_{i}$.
\ENDIF
\ENDFOR
\end{algorithmic}
\end{algorithm}

\subsection{Hamiltonian Monte Carlo algorithms}

Hamiltonian Monte Carlo (HMC) algorithms \cite{Hybrid_MCMC} seek to avoid quadratic slowdowns associated with diffusive ``random walk" behavior.  They do so by adding a notion of ``momentum" to the random walk, which encourages the underlying Markov chain to propose longer steps without incurring a large chance of rejection. For this reason HMC algorithms work especially well in high dimensions - concentration of the posterior measure $\pi$ causes most other MCMC algorithms to either propose steps that are very small or larger steps that are rejected with high probability (see \textit{e.g.,} \cite{roberts2001optimal,beskos2013optimal} for discussion of these heuristics).

In this section we review two commonly-used HMC algorithms.  Both algorithms are based on Hamiltonian dynamics, which we review here.  Fix a smooth function $H \, : \, \mathbb{R}^{2d} \mapsto \mathbb{R}^{+}$, called the \textit{Hamiltonian function}, and starting points $p_{0},q_{0} \in \mathbb{R}^{d}$. We then define \textit{Hamilton's equations} to be the pair of differential equations
\be \label{EqHamEq}
\frac{d}{dt} p(t) = - \frac{\partial H(p,q)}{\partial q}, \quad \frac{d}{dt} q(t) = \frac{\partial H(p,q)}{\partial p},
\ee
with initial condition $q(0) = q_{0}$, $p(0) = p_{0}$. Throughout the paper, we denote by $\gamma_{p,q}(t) = q(t)$ the first component of the solution to these equations with these initial conditions, so that \textit{e.g.}, $\gamma_{p,q}'(t) = p(t)$ for the ``standard" choice of Hamiltonian in Equation \eqref{EqStandardHam}.  The associated Hamiltonian function $H$ will always be clear from the context.

The solutions to Hamilton's equations have a number of special properties. Two of the most important are \textit{conservation of energy} and \textit{conservation of volume}:

\begin{enumerate}
\item \textbf{Conservation of Energy:} For $\gamma_{p,q}$ as above, we have 
\be \label{EqConsEn}
\frac{d}{dt} H(\gamma_{p,q}'(t), \gamma_{p,q}(t)) \equiv 0.
\ee
\item \textbf{Invariant distribution:} Assume that $\int_{p,q} e^{-H(p,q)} dq dp = 1$, and write $\mu_{H} = e^{-H(p,q)}$. If we sample $(P,Q) \sim \mu_{H}$, then we also have 
\be  \label{EqConsVol}
\mathcal{L}(\gamma_{P,Q}'(t), \gamma_{P,Q}(t)) = \mu_{H}
\ee for all $t \in \mathbb{R}^{+}$. This fact is a restatement of Liouville's theorem in probabilistic language.
\end{enumerate}

We refer the reader to \cite{lanczos1949variational} for proofs of these facts. The second fact motivates the standard choice of Hamiltonian
\be \label{EqStandardHam}
H(p,q) = -\log(\pi(q)) + \frac{1}{2} \| p \|^{2}.
\ee 
Under this measure, if $Q \sim \pi$ and $P \sim \mathcal{N}(0,1)$, then $\gamma_{P,Q}(t) \sim \pi$ for all $t \in \mathbb{R}$.
This choice leads to the isotropic-momentum HMC algorithm (Algorithm \ref{alg:isotropic_HMC}), developed in \cite{mehlig1992hybrid}:

\begin{algorithm}[H]
\caption{Isotropic-Momentum HMC (idealized symplectic integrator) \cite{mehlig1992hybrid}} \label{alg:isotropic_HMC}
\flushleft
\textbf{Input:} Starting point $q_0 \in \mathbb{R}^{d}$, integration time $T \in \mathbb{R}^{+}$, smooth target density $\pi: \mathbb{R}^n \rightarrow [0, \infty)$. \\
\textbf{Output:} Markov chain $q_1, q_2, \ldots$, with stationary distribution $\pi$. \\
\begin{algorithmic}[1]
\STATE Define $H(p,q) := -\mathrm{log}(\pi(q)) + \frac{1}{2} \| p \|^{2}$.
\FOR{$i = 1,2,\ldots$,}
\STATE Sample $p_i \sim \mathcal{N}(0,1)^d$. 
\STATE Set $q_{i+1} = \gamma_{p_{i}, q_{i}}(T)$.
\ENDFOR
\end{algorithmic}
\end{algorithm}

Riemannian Manifold HMC seeks to take longer steps by choosing initial momenta from a multivariate Gaussian distribution that agrees with the local geometry of the posterior density $\pi$. This is achieved by using trajectories that evolve according to Hamiltonian dynamics on a Riemmanian manifold, with metric defined by some positive definite matrix $G(q)$.  For the special case of isotropic-momentum HMC, we have $G(q) = I_d$.  Alternatively, one may choose $G(q)$ to be a regularization of the Hessian of $U(q)$, which acts as a local pre-conditioner for $U$ \cite{betancourt2013general}. This can result in much faster mixing for distributions that are not close to isotropic; for example, it avoids the bad performance of Algorithm \ref{alg:isotropic_HMC} on the nearly-degenerate Gaussian that is observed in Theorem \ref{ThmHmcDegenerate}. The algorithm is identical to Algorithm \ref{alg:isotropic_HMC}, except for the choice of the Hamiltonian $H$:

\begin{algorithm}[H]
\caption{Riemannian Manifold HMC (idealized symplectic integrator) \cite{Riemannian_HMC1, Riemannian_HMC2}} \label{alg:Riemannian_algorithm}
\flushleft
\textbf{Input:} Starting point $q_0 \in \mathbb{R}^{d}$, Integration time $T \in \mathbb{R}^{+}$, smooth target density $\pi: \mathbb{R}^d \rightarrow [0, \infty)$. \\
\textbf{Input:} Positive definite matrix $G(q)$ defining a Riemannian metric at every $q\in \mathbb{R}^d$. \\
\textbf{Output:} Markov chain $q_1, q_2, \ldots$, with stationary distribution $\pi$. \\
\begin{algorithmic}[1]
\STATE Define $H(p,q) := -\mathrm{log}(\pi(q)) +   \, \frac{1}{2}\log\left((2\pi)^d\det(G(q))\right) + \frac{1}{2} \langle p^\top G^{-1}(q), p \rangle$.
\FOR{$i = 1,2,\ldots$,}
\STATE Sample $p_i \sim \mathcal{N}(0, G^{-1}(q_i))$. 
\STATE Set $q_{i+1} = \gamma_{p_{i}, q_{i}}(T)$.
\ENDFOR
\end{algorithmic}
\end{algorithm}

We will usually make the following additional assumption on the Riemannian metric matrix $G(q)$ in this case:

\begin{assumption} \label{AssumptionBoundFIM}
Let $\lambda_{0}(q), \lambda_{1}(q)$ be the smallest and largest singular values of $G(q)$. Assume that for any compact set $A \subset \mathbb{R}^{d}$ we have  
\be 
\sup_{q \in A} \max \left( \frac{1}{\lambda_{0}(q)}, \, \lambda_{1}(q) \right) < \infty.
\ee 
\end{assumption}

Assumption \ref{AssumptionBoundFIM} says that the singular values of the 
metric $G$ are bounded away from $0$ and $\infty$ on compact sets. This is satisfied when $G$ is continuous and positive definite. More effort is required
when $G$ is allowed to be positive semi-definite; we do not study this case.

\subsection{Cheeger's inequality and the spectral gap}

We recall the basic definitions used to measure the efficiency of MCMC algorithms. Let $L$ be a reversible transition kernel with unique stationary distribution $\mu$ on $\mathbb{R}^{d}$. We view $L$ as an operator from $L_{2}(\pi)$ to itself:
\be
(L  f)(x) = \int_{y \in \mathbb{R}^{d}} L(x,dy) f(y).
\ee
The constant function is always an eigenfuncton of this operator, with eigenvalue 1. We define the space $W^{\perp} = \{ f \in L_{2}(\mu) \, : \, \int_{x} f(x) \mu(dx) = 0\}$ of functions that are orthogonal to the constant function, and denote by $L^{\perp}$ the restriction of the operator $L$ to the space $W^{\perp}$. We then define the \textit{spectral gap} $\rho$ of $L$ by the formula 
\be
\rho = \rho(L) \equiv 1 - \sup \{ |\lambda| \, : \, \lambda \in \mathrm{Spectrum}(L^{\perp}) \},
\ee
where $\mathrm{Spectrum}$ refers to the usual spectrum of an operator. If $L^{\perp}$ has a largest eigenvalue $|\lambda_2|$, then $\rho = 1-| \lambda_2 |$.  Geometric ergodicity of HMC algorithms was proved under very general conditions in \cite{livingstone2016geometric, Nawaf}, implying existence of a non-zero spectral gap under those conditions \cite{roberts1997geometric}.

Cheeger's inequality \cite{cheeger1970lower,lawler1988bounds} provides bounds for the spectral gap in terms of the ability of $L$ to move from any set to its complement in a single step. This ability is measured by the conductance $\Phi(L)$, which is defined by the pair of equations
\be 
\Phi(L) &= \inf_{S  \in \mathcal{A} \, : \, 0 < \mu(S) < \frac{1}{2}} \Phi(L,S) \\
\Phi(L,S) &= \frac{ \int_{x} \mathbbm{1}\{x \in S\} L(x,S^{c}) \mu(dx)}{\mu(S) },
\ee 
where $\mathcal{A}= \mathcal{A}(\mathbb{R}^{d})$ denote the usual collection of Lebesgue-measurable subsets of $\mathbb{R}^{d}$.

Cheeger's inequality for Markov chains, first proved in \cite{lawler1988bounds}, gives:
\begin{equation} \label{IneqCheegPoin}
\frac{\Phi(L)^2}{2} \leq \rho(L) \leq 2 \Phi(L).
\end{equation}

\section{Conductance of Hamiltonian Monte Carlo} \label{sec:HMC}

We derive equations for the conductance of HMC. Our main result, Theorem \ref{thm:1}, is based on the following two heuristics about HMC started at stationarity:
\begin{enumerate}
\item The conductance of a Markov chain measures the probability that a Markov chain will jump from some set $S$ to its complement in a single step. In the special case of HMC, this is roughly equivalent to the probability that the path $\gamma_{P,Q}$ will cross the boundary $\partial S$ an odd number of times.
\item By Liouville's theorem and reversibility, the \textit{rate} $\frac{d}{dt} | \gamma_{P,Q}([0,t]) \cap (\partial S)|$ at which the path $\gamma_{P,Q}$ intersects $\partial S$ is constant.
\end{enumerate}

Theorem \ref{thm:1} describes the conductance of a set purely in terms of this constant ``crossing rate" and the probability that the total number of crossings will be odd. Unfortunately, defining ``crossings" in a precise way requires a substantial amount of additional notation, which we give in Section \ref{SubsecNotationManifolds}. Readers primarily interested in upper bounds on the conductance may skip to Corollary \ref{cor:1}, which gives an easier-to-state upper bound on the conductance that is often close to sharp.

Although we introduce some restrictions on the set $S$ before giving our formula for the conductance of a \textit{particular set} $S$, we later prove that is possible to compute the conductance of \textit{the entire Markov chain} exactly using only the sets that satisfy these assumptions; see Lemma \ref{LemmSuffLoc} for details. Thus, our formula can in fact be used to compute the conductance exactly.

\subsection{Notation Related to Manifolds and Intersections} \label{SubsecNotationManifolds}

Fix a set $S \subset \mathbb{R}^{d}$ whose topological boundary $\partial S$  is an embedded $(d-1)$-dimensional manifold. Fix also an integration time $T \in \mathbb{R}^{+}$. In this special case, we define the \textit{number of intersections} $N_{\partial S}$ as follows:

\begin{defn} \label{DefNumInt}
For fixed $T > 0$, define the set $\mathcal{G}_{S} = \mathcal{G}_{S}(T) \subset \mathbb{R}^{2d}$ to be all pairs $(p,q) \in \mathbb{R}^{2d}$ satisfying:
\begin{enumerate}
\item The function $\gamma_{p,q} \, : \, \mathbb{R} \mapsto \mathbb{R}^{d}$ 
is \textit{transverse} to the manifold $\partial S$.
\item The set $\{ t \in [0,T] \, : \, \gamma_{p,q}(t) \in \partial S\}$ is finite.
\end{enumerate}

For $(p,q) \in \mathcal{G}_{S}$, define
\be
N_{\partial S} = N_{\partial S}(p,q) \equiv |\{ t \in [0,T] \, : \, \gamma_{p,q}(t) \in \partial S\}|.
\ee
\end{defn}

To recall a precise definition of a \textit{transverse} intersection, see \textit{e.g.,} \cite{guillemin2010differential}. In this paper, we are primarily interested in the following property of transverse intersections. Let $\partial S \subset \mathbb{R}^{d}$ be a closed surface that partitions $\mathbb{R}^{d}$ into two open pieces $P_{1}'$, $P_{2}'$ with closures $P_{1},P_{2}$, and let $\gamma \, : \, [0,1] \mapsto \mathbb{R}^{d}$ be a curve that is transverse to $\partial S$. Assume that $|\gamma([0,1]) \cap \partial S| < \infty$. Then $\gamma(0) \notin \partial S, \, \gamma(1)$ are in the same closed piece of $\mathbb{R}^{d}$ if and only if $|\gamma([0,1]) \cap \partial S| < \infty$ is even.

In other words: if all intersections are transverse, we can tell which side of a curve a path will end on by simply counting the number of intersections. This is not true for non-transverse intersections. For example, consider the intersection of the circle given by $x^{2} + y^{2} = 1$ and the curve $\gamma(t)=(2t - 1, 1)$; they intersect at exactly one point but the curve never enters the inside of the circle. See also Figure \ref{FigTransverse}.

\begin{figure}[H]
\includegraphics[scale=0.4]{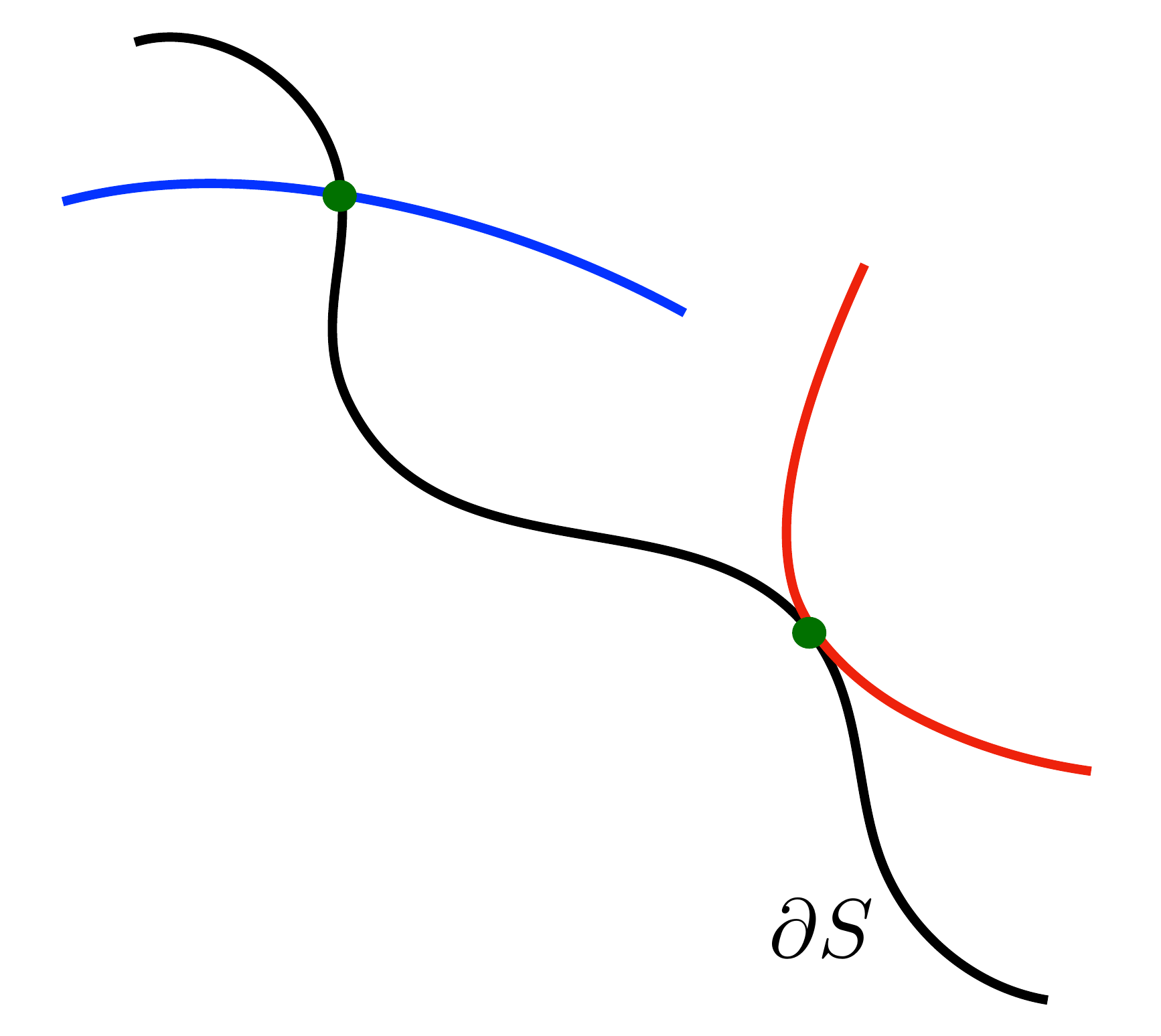}
\caption{A boundary $\partial S$ is shown in black. The intersection of $\partial S$ with the blue curve is transverse; the intersection with the red curve is not.\label{FigTransverse}}
\end{figure}

We next show that this definition of $N_{\partial S}(p,q)$ can easily be extended to all $(p,q) \in \mathbb{R}^{2d}$, under modest assumptions about $S$.  We need the following simple notation:

\begin{itemize}
\item For $q \in \partial S$, define $\eta(q)$ to be the unit normal vector of $\partial S$ at $q$ that points away from the interior of $S$, and for $p \in \mathbb{R}^{d}$ define
\be
p_{q} = \langle G^{-1}(q) p, \eta(q) \rangle
\ee

to be the component of $p$ in the direction orthogonal to $\partial S$ at q.
\item For $q \in \partial S$, define the set 

\be 
\mathcal{P}^{+}_{S}(q) = \{ p \in \mathbb{R}^{d} \, : \, p_{q} > 0 \}
\ee

to be the half-space of momentum vectors pointing away from $S$ at $q\in \partial S$.
\end{itemize}

For $x \in \partial S$, denote by $\mathcal{T}_{x}$ the tangent space of $\partial S$ at $x$. We view this tangent space as being embedded in the same copy of $\mathbb{R}^{d}$ as $S$ and being based at the point $x$ (so that, for example, it will generally not include 0). Denote by $\mathbbm{Proj}_{x} \, : \, \mathbb{R}^{d} \mapsto \mathcal{T}_{x}$ the projection map from $\mathbb{R}^{d}$ to the tangent space. We assume:

\begin{assumption} [Locally Well-Behaved Manifold] \label{DefLocallyWellBehaved}
We say that $S$ is \textit{locally well-behaved} if, for every compact set $A \subset \mathbb{R}^{d}$, there exist constants $0 < \epsilon, C < \infty$ so that:
\begin{enumerate}
\item For every ball $A' \subset A$ of diameter less than $\epsilon$, the set $A' \cap \partial S$ has a single component, and
\item For every $x, y \in (A \cap \partial S)$,
\be
\| y - \mathbbm{Proj}_{x}(y) \| \leq C \|x - \mathbbm{Proj}_{x}(y) \|^{2}
\ee
and 
\be
\| \eta(x) - \eta(y) \| \leq C \|x-y \|.
\ee
\end{enumerate}
\end{assumption}

\begin{remark}
We have not tried to provide the weakest-possible assumptions in Assumption \ref{DefLocallyWellBehaved}. Since Lemma \ref{LemmSuffLoc} implies that the conductance of most realistic HMC chains can be computed entirely in terms of sets that satisfy Assumption \ref{DefLocallyWellBehaved}, these assumptions are weak enough for our purposes. Here, we explain why something like Assumption \ref{DefLocallyWellBehaved} is needed at all.

The goal is to rule out the possibility that solutions to Hamilton's equations will pass through $\partial S$ very many times over very short intervals. To give a simple pathological example, we wish to avoid sets such as
\be  \label{BadSetSDef}
S_{\infty} = \cup_{n \in \mathbb{N}} [0,1] \times [(4n)^{-2}, (4n+1)^{-2}].
\ee

The problem with $S_{\infty}$ is that the boundary $\partial S$  contains a countably infinite union of parallel lines within the compact set $[0,1]^{2}$, and so arbitrarily short Hamiltonian paths can cross $\partial S$ arbitrarily (or even infinitely) often. 
\end{remark}

We next note that $N_{\partial S}$ can be defined $\mathbb{R}^{2d}$-almost everywhere:

\begin{lemma} \label{LemCountingPossible}
Set notation as above. Then the set $\mathcal{G}_{S}^{c}(T)$ has measure 0 for any $T \in \mathbb{R}^{+}$.
\end{lemma}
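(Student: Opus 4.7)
My plan is to decompose $\mathcal{G}_{S}^{c}(T) = B_1 \cup B_2$, where $B_1$ is the set of $(p,q)$ for which $\gamma_{p,q}$ fails to be transverse to $\partial S$ somewhere on $\mathbb{R}$, and $B_2$ is the set on which every intersection is transverse but $\gamma_{p,q}^{-1}(\partial S)\cap[0,T]$ is nevertheless infinite. I then plan to show $B_2 \subseteq B_1$, so it suffices to verify that $B_1$ has Lebesgue measure zero.

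For the inclusion $B_2 \subseteq B_1$, suppose $(p,q)\notin B_1$ and $t^\ast \in [0,T]$ is an intersection time. Near $\gamma_{p,q}(t^\ast)$, the embedded manifold $\partial S$ is cut out as the zero set of a $C^1$ defining function $f$ with $\nabla f \neq 0$. Transversality gives
\[
\tfrac{d}{dt} f\bigl(\gamma_{p,q}(t)\bigr)\Big|_{t=t^\ast} = \bigl\langle \nabla f(\gamma_{p,q}(t^\ast)),\, \gamma_{p,q}'(t^\ast)\bigr\rangle \neq 0,
\]
and by continuity this derivative is nonzero on a neighborhood of $t^\ast$, so $t^\ast$ is an isolated zero of $f\circ\gamma_{p,q}$. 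If $\gamma_{p,q}^{-1}(\partial S)\cap[0,T]$ were infinite, compactness would produce an accumulation point $t_0$; continuity of $\gamma_{p,q}$ together with closedness of $\partial S$ would force $\gamma_{p,q}(t_0)\in \partial S$, contradicting the isolation of $t_0$. Hence $B_2\subseteq B_1$.

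For $B_1$ I will use a Sard-type dimension count. Define the non-transverse configurations
\[
\Sigma = \{(p,q)\in \mathbb{R}^{2d}\,:\, q\in\partial S,\ \langle G^{-1}(q)p,\,\eta(q)\rangle = 0\}.
\]
On the smooth $(2d-1)$-dimensional manifold $\partial S\times\mathbb{R}^d$, the scalar constraint cutting out $\Sigma$ has nonvanishing $p$-gradient $G^{-1}(q)\eta(q)\neq 0$ (since $G$ is positive definite), so $\Sigma$ is a smooth submanifold of $\mathbb{R}^{2d}$ of dimension $2d-2$. Writing $\Phi_s$ for the Hamiltonian flow, $(p,q)\in B_1$ iff $\Phi_s(p,q)\in \Sigma$ for some $s\in \mathbb{R}$, so $B_1 = \Psi(\mathbb{R}\times \Sigma)$ with $\Psi(s,x) = \Phi_{-s}(x)$. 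The domain $\mathbb{R}\times \Sigma$ is $\sigma$-compact of dimension $2d-1 < 2d$, and $\Psi$ is $C^1$ by smoothness of $H$; the standard fact that $C^1$ images of lower-dimensional manifolds into Euclidean space have Lebesgue measure zero then gives $|B_1| = 0$.

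The main obstacle I foresee is that the Hamiltonian flow $\Phi_s$ need not be globally defined on $\mathbb{R}$ for every initial condition. For the targets and metrics of interest in the paper (e.g.\ the mixture \eqref{eqn:gausmix}, or anything where $H$ has compact sublevel sets) conservation of energy makes this automatic; in general one may restrict to the open, conull set of initial conditions for which $\Phi_s$ exists on $[-T,T]$. It is worth noting that Assumption \ref{DefLocallyWellBehaved} is not actually required for this particular lemma -- the embedded-manifold structure of $\partial S$ by itself suffices -- though it becomes essential in the later quantitative statements bounding conductance by surface integrals over $\partial S$.
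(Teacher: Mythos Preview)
Your proof is correct, and the finiteness step ($B_2\subseteq B_1$, hence $B_2=\emptyset$) is exactly what the paper does. The difference lies in how $B_1$ is handled. The paper invokes the parametric transversality theorem directly: the map $F(p,q,t)=\gamma_{p,q}(t)$ is a submersion (the differential in the $(p,q)$-variables alone already has rank $d$, since the Hamiltonian flow is a diffeomorphism), hence $F$ is transverse to $\partial S$, and Thom's theorem yields that $\gamma_{p,q}(\cdot)$ is transverse to $\partial S$ for almost every $(p,q)$. Your dimension count on the non-transverse locus $\Sigma$ is essentially that theorem unwound by hand in this special case. The paper's route is a one-line citation once the right theorem is named; yours is more self-contained and makes the geometry of the bad set explicit---it is the flow-orbit of a codimension-two stratum---which could be useful if one later wanted quantitative control on how ``thick'' the exceptional set is. One minor regularity point: your regular-value argument for $\Sigma$ implicitly needs $\eta$ to be $C^1$ (hence $\partial S$ to be $C^2$) for $\Sigma$ to be a genuine $C^1$ submanifold, whereas the parametric transversality route needs only $\partial S\in C^1$; this is harmless under the smoothness hypotheses of the paper. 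Your closing remarks---that Assumption~\ref{DefLocallyWellBehaved} is not actually used here, and that global existence of the flow is being tacitly assumed---are both correct and go unremarked in the paper's proof.
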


\begin{proof}
The proof is deferred to Appendix \ref{SecCountingPossible}.
\end{proof}

By this lemma, for all fixed $T \in \mathbb{R}^{+}$ we can define a measurable function $N_{\partial S} \, : \, \mathbb{R}^{2d} \mapsto \mathbb{N}$ that agrees with Definition \ref{DefNumInt} for almost every value of $(p,q) \in \mathbb{R}^{2d}$. Throughout the rest of the paper, we will always assume that all intersections are transverse. By this lemma, the complement of the set $\cap_{n \in \mathbb{N}} \mathcal{G}_{S}(n)$ has measure 0, and so this assumption will not influence the results of any calculations.

Next, we define two families of tilted measures on $\mathbb{R}^{2d}$:

\begin{defn} [Tilted Measures]

Let $\tilde{\mathbb{P}}$ be the probability measure on $\mathbb{R}^{2d}$ with density
\be
\tilde{\mathbb{P}}(p,q) \propto e^{-H(p,q)} \, \mathbbm{1} \{ N_{\partial S}(p,q) \geq 1 \}.
\ee

Define $\mathbb{Q}$ to be the probability measure on $\mathbb{R}^{2d}$ with density
\be 
\mathbb{Q}(p,q) = \tilde{\mathbb{P}}(p,q) \cdot N_{\partial S}(p,q).
\ee
\end{defn}

For a set $A \subset \mathbb{R}^{d}$ and a constant $c > 0$, define the $c$-thickening of $A$ to be 
\be 
A_{c} = \{x \in \mathbb{R}^{d} \, : \, \inf_{a \in A} \, \|x-a\| \leq c\}.
\ee

\subsection{Main Conductance Formula}

Our main theorem is:

\begin{thm} \label{thm:1} 

Let $T \in \mathbb{R}^{+}$, let $\pi(q)$ be any smooth probability density on $\mathbb{R}^d$, let $K$ and $H$ be the transition kernel and Hamiltonian of either Algorithm \ref{alg:isotropic_HMC} or Algorithm \ref{alg:Riemannian_algorithm} with these parameters, and let $\mu_{H}(p,q) \propto e^{-H(p,q)}$ be the associated Hamiltonian measure. Let $S \subset \mathbb{R}^{d}$ be any subset whose boundary $\partial S$ is a smooth manifold satisfying Assumption \ref{DefLocallyWellBehaved}. Finally, in the case of  Algorithm \ref{alg:Riemannian_algorithm}, let Assumption \ref{AssumptionBoundFIM} also hold.

Then the conductance of $K$ satisfies
\be\label{eq:HMC_traditional_Cheeger}
\Phi(K,S) = \Phi^+ \cdot \mathbb{E}\bigg[\frac{1}{N_{\partial S}(P,Q)} \cdot \mathbbm{1}\{N_{\partial S}(P,Q) \, \mathrm{odd}\}\bigg] \bigg/ (\pi(S)),
\ee

where the expectation is taken with respect to the random variables $(P,Q) \sim \mathbb{Q}$ and the total positive flux $\Phi^+$ is given by
\be  \label{EqPosFluxConc1}
\Phi^+ = \frac{1}{2} T \cdot \int_{\partial S} \int_{\mathbb{R}^d} \mu_{H}(p,q) \cdot    |p_q| \mathrm{d}p\mathrm{d}q.
\ee

In the case of Algorithm \ref{alg:isotropic_HMC}, this formula for $\Phi^+$ reduces to the simpler expression

\be  \label{EqPosFluxConc2}
\Phi^+ = \frac{1}{2}T \cdot \int_{\partial S} \pi(q) \mathrm{d}q.
\ee 

\end{thm}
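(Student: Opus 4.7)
The plan is to rewrite the numerator $\pi(S)\,\Phi(K,S)$ of the conductance in two equivalent ways and equate them. First, by Lemma \ref{LemCountingPossible} the set of $(p,q)$ whose Hamiltonian trajectory is not transverse to $\partial S$ is $\mu_H$-null, and for the remaining $(p,q)$ with $q\in S$ the endpoint $\gamma_{p,q}(T)$ lies in $S^c$ iff $N_{\partial S}(p,q)$ is odd. Next, $\pi$-reversibility of HMC (equivalently, $\mu_H$-invariance of the time-$T$ flow $\phi_T:(p,q)\mapsto(\gamma_{p,q}'(T),\gamma_{p,q}(T))$ composed with momentum reflection $p\mapsto -p$) sends $\{q\in S,\,N_{\partial S}\text{ odd}\}$ bijectively onto $\{q\in S^c,\,N_{\partial S}\text{ odd}\}$ while preserving both $\mu_H$ and $N_{\partial S}$, so
\[ \pi(S)\,\Phi(K,S) \;=\; \int \mathbbm{1}\{q\in S\}\,\mathbbm{1}\{N_{\partial S}\text{ odd}\}\,\mu_H\,dp\,dq \;=\; \tfrac{1}{2}\!\int_{\mathbb{R}^{2d}}\!\mathbbm{1}\{N_{\partial S}\text{ odd}\}\,\mu_H\,dp\,dq. \]

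Next, I would compute the total mass of $\mu_H\cdot N_{\partial S}$, i.e.\ the normalization of $\mathbb{Q}$. The coarea identity gives
\[ N_{\partial S}(p,q) \;=\; \int_0^T |p_q(t)|\,\delta\!\big(\mathrm{dist}(\gamma_{p,q}(t),\partial S)\big)\,dt, \]
where $|p_q(t)|$ is the instantaneous normal speed of the trajectory at time $t$. Applying Fubini and the $\mu_H$-invariance of each $\phi_s$ from Liouville's theorem \eqref{EqConsVol}, the inner expectation is independent of $s$, yielding
\[ \mathbb{E}_{\mu_H}[N_{\partial S}] \;=\; T\!\int_{\partial S}\!\int_{\mathbb{R}^d}\!\mu_H(p,q)\,|p_q|\,dp\,d\sigma(q) \;=\; 2\Phi^+. \]
Combining with the first display gives
\[ \mathbb{E}_{\mathbb{Q}}\!\Big[\tfrac{\mathbbm{1}\{N_{\partial S}\text{ odd}\}}{N_{\partial S}}\Big] \;=\; \tfrac{1}{2\Phi^+}\!\int\!\mathbbm{1}\{N_{\partial S}\text{ odd}\}\,\mu_H\,dp\,dq \;=\; \tfrac{\pi(S)\,\Phi(K,S)}{\Phi^+}, \]
which rearranges to \eqref{eq:HMC_traditional_Cheeger}. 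For Algorithm \ref{alg:isotropic_HMC}, $\mu_H(p,q)$ factorizes as $\pi(q)$ times a standard Gaussian density in $p$ and $|p_q|=|\langle p,\eta(q)\rangle|$, so integrating the one-dimensional Gaussian in the normal direction reduces \eqref{EqPosFluxConc1} to the surface integral of $\pi$ appearing in \eqref{EqPosFluxConc2}.

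The main obstacle is making the coarea/Dirac identity for $N_{\partial S}$ rigorous. One must rule out pathologies such as the set $S_\infty$ in \eqref{BadSetSDef}, whose boundary is crossed arbitrarily (or infinitely) often on short arcs; this is precisely the role of Assumption \ref{DefLocallyWellBehaved}, which bounds the curvature of $\partial S$ and ensures a single boundary component inside small balls. Assumption \ref{AssumptionBoundFIM} plays the analogous role in the Riemannian case by keeping $G(q)^{\pm 1}$ bounded on compacts so that the flow and its interaction with $\partial S$ remain regular. Once these technicalities are secured, the $\mu_H$-invariance of the Hamiltonian flow is exactly what makes the instantaneous crossing rate constant in $t$ and produces the clean surface-integral form of $\Phi^+$.
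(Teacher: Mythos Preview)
Your proposal is correct and follows essentially the same two-step architecture as the paper: first use reversibility of the time-$T$ flow (with momentum flip) to show that $\pi(S)\Phi(K,S)=\tfrac12\,\mu_H(N_{\partial S}\text{ odd})$, then use Liouville invariance of $\phi_s$ to compute $\mathbb{E}_{\mu_H}[N_{\partial S}]=2\Phi^+$ as a surface integral, and combine. The only difference is packaging: where you invoke a coarea/Dirac identity for $N_{\partial S}$ and then correctly flag it as the ``main obstacle,'' the paper makes that step rigorous directly via its technical Lemma~\ref{LemmaTechIntLemma}, linearizing the flow over short time $h$ and using Assumption~\ref{DefLocallyWellBehaved} (and Assumption~\ref{AssumptionBoundFIM} in the RHMC case) to bound the probability of near-tangent or multiple crossings by $o(h)$.
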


\begin{proof}
The proof of this result is given in Appendix \ref{SecPfThm1}.
\end{proof}

Note that, in Equations \eqref{EqPosFluxConc1} and \eqref{EqPosFluxConc2}, the integral over $\partial S$ is taken with respect to the volume measure on the $(d-1)$-dimensional manifold $\partial S$, not with respect to the Lebesgue measure on the $d$-dimensional space $\mathbb{R}^{d}$.

In many applications, it is more important to get a good bound on the conductance than to compute it directly. Observing that $\mathbb{E}_\mathbb{Q}\bigg[\frac{1}{N_{\partial S}} \cdot \mathbbm{1}\{N_{\partial S} \, \mathrm{odd}\}\bigg] \leq 1$, we have the following useful consequence of Theorem \ref{thm:1}:

\begin{cor}[Simple Conductance Bound] \label{cor:1} 

Set notation as in Theorem \ref{thm:1}. Then the conductance for the Isotropic-Momentum HMC algorithm is bounded by 

\be 
\Phi(K,S) \leq \frac{1}{2}T\frac{\int_{\partial S} \pi(q) \mathrm{d}q \cdot }{\pi(S)}. 
\ee 
\end{cor}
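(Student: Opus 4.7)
The plan is to observe that this corollary follows immediately from Theorem \ref{thm:1} by bounding the expectation factor trivially. Specifically, starting from the exact formula
\[
\Phi(K,S) = \Phi^+ \cdot \mathbb{E}\!\left[\frac{1}{N_{\partial S}(P,Q)} \, \mathbbm{1}\{N_{\partial S}(P,Q) \text{ odd}\}\right] \bigg/ \pi(S)
\]
with $(P,Q) \sim \mathbb{Q}$, I would first note that $\mathbb{Q}$ is supported on pairs $(p,q)$ with $N_{\partial S}(p,q) \geq 1$ (this is enforced by the indicator in the definition of $\tilde{\mathbb{P}}$). Consequently $1/N_{\partial S}(P,Q) \leq 1$ almost surely under $\mathbb{Q}$, and the indicator $\mathbbm{1}\{N_{\partial S}(P,Q) \text{ odd}\}$ is of course also bounded by $1$, so the entire expectation is at most $1$.

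Next, I would substitute the explicit form of $\Phi^+$ from Equation \eqref{EqPosFluxConc2} valid for Algorithm \ref{alg:isotropic_HMC}, namely
\[
\Phi^+ = \frac{1}{2} T \int_{\partial S} \pi(q)\, \mathrm{d}q,
\]
which follows for the standard Hamiltonian $H(p,q)=-\log\pi(q)+\tfrac{1}{2}\|p\|^{2}$ because the momentum factor of $\mu_H$ integrates out via $\mathbb{E}|p_q| = \sqrt{2/\pi}$ against a suitable normalization---but for the purpose of proving the corollary we may simply quote \eqref{EqPosFluxConc2}. Combining the two displays yields
\[
\Phi(K,S) \;\leq\; \frac{\Phi^+}{\pi(S)} \;=\; \frac{1}{2}T\,\frac{\int_{\partial S}\pi(q)\,\mathrm{d}q}{\pi(S)},
\]
which is exactly the claimed bound.

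There is essentially no obstacle here: the entire work is carried by Theorem \ref{thm:1}, and the corollary is only recording the trivial upper bound on the expectation appearing in that formula. The only small technical point to verify is that the expectation is indeed well defined and $\leq 1$, which in turn reduces to checking that $\mathbb{Q}$ puts no mass on $\{N_{\partial S}=0\}$; this is immediate from its definition and does not require invoking Assumption \ref{DefLocallyWellBehaved} or Assumption \ref{AssumptionBoundFIM} beyond what was already needed to state Theorem \ref{thm:1}.
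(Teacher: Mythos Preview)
Your proposal is correct and matches the paper's own argument essentially verbatim: the paper derives the corollary by observing that $\mathbb{E}_{\mathbb{Q}}\bigl[\tfrac{1}{N_{\partial S}}\mathbbm{1}\{N_{\partial S}\text{ odd}\}\bigr]\leq 1$ and then quoting Equation~\eqref{EqPosFluxConc2} for $\Phi^{+}$ in the isotropic case. Your additional remark that $\mathbb{Q}$ is supported on $\{N_{\partial S}\geq 1\}$ is the right justification for the expectation bound.
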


An immediate consequence of this bound is that the time it takes energy-conserving Hamiltonian Markov chains to search for sub-Gaussian modes grows exponentially with both the dimension and the distance between modes. Numerical simulations for various two-mode densities that approximate Gaussian mixture models (Figure \ref{figure:simulation}) suggest that this upper bound is nearly tight in many cases where $T$ is not too large.

\subsection{Removing Assumption \ref{DefLocallyWellBehaved}}

Theorem \ref{thm:1} only applies to sets that satisfy Assumption \ref{DefLocallyWellBehaved}. Fortunately, it is not necessary to consider any other sets to compute the conductance of typical HMC Markov chains:

\begin{lemma} [Sufficiency of Compact Locally-Good Manifolds] \label{LemmSuffLoc}
Let $\pi$ be a distribution with twice-differentiable density, let $T > 0$, and let $K_{T}$ be the transition kernel given by Algorithm \ref{alg:isotropic_HMC} with these parameters. Let $\mathfrak{B}$ be the Lebesgue measurable subsets of $\mathbb{R}^{d}$, and let $\mathfrak{A} \subset \mathfrak{B}$ be the subsets that also satisfy Assumption \ref{DefLocallyWellBehaved}. Then
\be \label{IneqSuffLoc1}
\inf_{S \in \mathfrak{A}} \Phi(K_{T},S) = \inf_{S \in \mathfrak{B}} \Phi(K_{T},S).
\ee 
\end{lemma}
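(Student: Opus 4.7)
The direction $\inf_{\mathfrak{A}}\Phi\ge\inf_{\mathfrak{B}}\Phi$ is immediate from $\mathfrak{A}\subset\mathfrak{B}$, so the content of the lemma is the reverse inequality. The plan is to show that every $S\in\mathfrak{B}$ with $0<\pi(S)<\tfrac{1}{2}$ can be approximated by sets $S_n\in\mathfrak{A}$ with $\Phi(K_T,S_n)\to\Phi(K_T,S)$; once I have this, taking infima on both sides gives the equality \eqref{IneqSuffLoc1}.

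Construction of the $S_n$ proceeds by standard mollification followed by taking a regular level set. By inner regularity of $\pi$, I pick compact $L_n\subset S$ with $\pi(L_n)\to\pi(S)$. Let $\phi_{\epsilon_n}$ be a smooth compactly supported bump of width $\epsilon_n\downarrow 0$, and set $f_n=\phi_{\epsilon_n}\ast\mathbbm{1}_{L_n}$. Sard's theorem lets me pick a regular value $c_n\in(\tfrac{1}{3},\tfrac{2}{3})$ of $f_n$; then $S_n:=\{f_n>c_n\}$ is open and bounded with smooth compact boundary $\partial S_n=f_n^{-1}(c_n)$. The three conditions of Assumption \ref{DefLocallyWellBehaved} follow from compactness and smoothness of $\partial S_n$: the implicit function theorem supplies a uniform radius inside which $\partial S_n$ has a single component, the second fundamental form is bounded so the tangent plane approximates $\partial S_n$ quadratically, and the Gauss map is smooth and hence Lipschitz. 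Thus $S_n\in\mathfrak{A}$, and $\pi(S_n\triangle S)\to 0$ by the choices of $L_n$ and $\epsilon_n$.

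The convergence of conductance is where invariance of $\pi$ under $K_T$ does all the work. Since $\pi(S_n)\to\pi(S)\in(0,\tfrac{1}{2})$, for large $n$ the sets $S_n$ are admissible in the conductance infimum. For the numerator I would split using $|K_T(x,S_n^c)-K_T(x,S^c)|\le K_T(x,S_n\triangle S)$ together with the stationarity identity $\int K_T(x,A)\,\pi(dx)=\pi(A)$, to obtain
\[
\Big|\int_{S_n}K_T(x,S_n^c)\,\pi(dx)-\int_{S}K_T(x,S^c)\,\pi(dx)\Big|\le 2\pi(S_n\triangle S)\to 0.
\]
Dividing by the converging denominator yields $\Phi(K_T,S_n)\to\Phi(K_T,S)$, hence $\inf_{\mathfrak{A}}\Phi\le\Phi(K_T,S)$; taking the infimum over $S$ finishes the proof.

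The step I expect to be the main obstacle is ensuring uniform control of the constants in Assumption \ref{DefLocallyWellBehaved} for the approximants $S_n$, rather than anything about the conductance convergence itself. Keeping the regular value $c_n$ bounded away from $0$ and $1$ is precisely what forces $\|\nabla f_n\|$ to stay bounded below on $\partial S_n$, which in turn yields bounds on the principal curvatures of $\partial S_n$ on each fixed compact set $A$; without that, the constant $C$ in Assumption \ref{DefLocallyWellBehaved} could blow up along the sequence and the $S_n$ would fail to lie in $\mathfrak{A}$. The conductance-convergence step, by contrast, is purely soft and relies only on the invariance of $\pi$ under the Hamiltonian flow, a formal consequence of Liouville's theorem.
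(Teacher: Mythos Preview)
Your proof is correct. The overall strategy matches the paper's: approximate an arbitrary measurable $S$ by sets in $\mathfrak{A}$ that are close in $\pi$-measure, then control the conductance via stationarity. Your conductance-comparison step is essentially identical to the paper's---both reduce to the identity $\int K_T(x,A)\,\pi(dx)=\pi(A)$, and your bound $2\pi(S_n\triangle S)$ on the numerator difference is exactly the estimate the paper writes out as an explicit chain of inequalities.

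Where you differ is in constructing the approximants. The paper approximates $S$ by a finite union of open rectangles (via the Lebesgue regularity theorem) and then takes a small $\delta$-thickening to round the corners into $\mathfrak{A}$. You instead use inner regularity by compacts, mollify the indicator, and invoke Sard's theorem to select a regular superlevel set. Your route produces a genuinely $C^\infty$ compact hypersurface, for which Assumption~\ref{DefLocallyWellBehaved} follows directly from the tubular-neighbourhood theorem and boundedness of the second fundamental form; the paper's rounded polyhedra are only $C^{1,1}$ at the seams and the verification that they lie in $\mathfrak{A}$ is asserted rather than spelled out. Both constructions work, and yours is arguably cleaner.

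One correction to your closing paragraph: there is no obstacle where you locate one. Membership in $\mathfrak{A}$ is a property of each $S_n$ separately; Assumption~\ref{DefLocallyWellBehaved} only demands that for each fixed set and each compact $A$ some constants $\epsilon,C$ exist, with no uniformity across the approximating sequence required. Since every $\partial S_n$ is a smooth compact hypersurface, such constants exist automatically for each $n$, and whether they degenerate as $n\to\infty$ is irrelevant to the argument. Keeping $c_n\in(\tfrac13,\tfrac23)$ matters only for the measure approximation $\pi(S_n\triangle S)\to 0$---it lets you bound the Lebesgue measure of $S_n\triangle L_n$ by a multiple of $\|f_n-\mathbbm{1}_{L_n}\|_{L^1}$---and not for curvature control.
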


\begin{proof}
The proof is deferred to Appendix \ref{SecLemmSuffLoc}.
\end{proof}

\section{Examples and Applications} \label{SecAppl}

We illustrate our conductance bounds with two examples:

\begin{enumerate}
\item In Section \ref{SubsecAppMix}, we use Theorem \ref{thm:1} to compute the Cheeger constant of an HMC algorithm targetting a mixture of Gaussians. We also compute the spectral gap of this Markov chain, showing that it is roughly equal to the Cheeger constant. In particular, the \textit{upper} bound of Inequality \eqref{IneqCheegPoin} is close to sharp in this example.

\item In Section \ref{SubsecAppDeg}, we use Theorem \ref{thm:1} to compute the Cheeger constant of an HMC algorithm targetting a multivariate Gaussian with nearly-singular covariance matrix. We also compute the spectral gap of this Markov chain, showing that it is roughly equal to the \textit{square} of the Cheeger constant.  In particular, the \textit{lower} bound of Inequality \eqref{IneqCheegPoin} is close to sharp.
\item In Section \ref{SubsecAppNum}, we numerically compute the spectral gap and Cheeger constant of an HMC algorithm to illustrate Theorem \ref{thm:1}.
\end{enumerate}

\subsection{Application: HMC Targetting Mixture of Gaussians} \label{SubsecAppMix}

For $\sigma > 0$, define the mixture distribution 
\be  \label{EqDefMixDistBasic}
\pi_{\sigma} = \frac{1}{2} \mathcal{N}(-1,\sigma^2) + \frac{1}{2} \mathcal{N}(1,\sigma^2)
\ee  
and denote its density by $f_{\sigma}$. Let $K_{\sigma}$ be the transition kernel of Algorithm \ref{alg:isotropic_HMC} with target distribution $\pi = \pi_{\sigma}$ and time-step $T = T_{\sigma} \equiv \sigma$. Denote by $\lambda_{\sigma}$ the relaxation time of $K_{\sigma}$, and denote by $\Phi_{\sigma} = \Phi(K_{\sigma}, (-\infty,0))$ the Cheeger constant associated with kernel $K_{\sigma}$ and set $(-\infty,0)$.

For fixed $x \in (-\infty,0)$, let $\{X_{t}^{(\sigma)}\}_{t \in \mathbb{N}}$ be a Markov chain with transition kernel $K_{\sigma}$ and initial point $X_{1}^{(\sigma)} = x$. Define the hitting time 
\be \label{EqDefTauSigmaX}
\tau_{x}^{(\sigma)} = \inf \{ t > 0 \, : \, X_{t}^{(\sigma)} \notin (-\infty,0)\}.
\ee

We show:

\begin{thm} [HMC for Multimodal Distributions] \label{ThmHmcMultimodal}
The Cheeger constant $\Phi_{\sigma}$ satisfies 
\be  \label{EqMultiCheegAsym}
\lim_{\sigma \rightarrow 0} (-2 \sigma^{2}) \, \log(\Phi_{\sigma}) = 1.
\ee
Furthermore, for all $\epsilon > 0$ and fixed $x \in (-\infty,0)$, the hitting time $\tau_{x}^{(\sigma)}$ satisfies 
\be \label{EqMultiCheegHitting}
\lim_{\sigma \rightarrow 0} \P[\frac{\log(\tau_{x}^{(\sigma)})}{\log(\Phi_{\sigma})} < 1 + \epsilon] = 1
\ee 
and the relaxation time satisfies 
\be   \label{IneqRelMulti}
\lim_{\sigma \rightarrow 0} \frac{\log(\lambda_{\sigma})}{\log(\Phi_{\sigma})} = \lim_{\sigma \rightarrow 0} \frac{\log(\lambda_{\sigma})}{\log(\Phi(K_{\sigma}))} = 1.
\ee 
\end{thm}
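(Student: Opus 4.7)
My plan is to compute $\Phi_\sigma$ essentially exactly via Theorem \ref{thm:1}, and then to deduce the hitting-time and spectral-gap asymptotics from this Cheeger estimate by classical arguments. For \eqref{EqMultiCheegAsym}, with $S = (-\infty,0)$ the boundary $\partial S = \{0\}$ is a single point and $\pi_\sigma(S) = \tfrac{1}{2}$, so Corollary \ref{cor:1} gives
\be
\Phi_\sigma \leq T \cdot f_\sigma(0) = \sigma \cdot \frac{1}{\sqrt{2\pi\sigma^2}} e^{-1/(2\sigma^2)}(1+o(1)) = \frac{1}{\sqrt{2\pi}} e^{-1/(2\sigma^2)}(1+o(1)),
\ee
which yields $\liminf_{\sigma\to 0} (-2\sigma^{2})\log\Phi_\sigma \geq 1$. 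For the matching lower bound I apply the exact formula of Theorem \ref{thm:1}: the prefactor $\Phi^{+}$ equals the right-hand side above, so it remains to show that $\E_{\mathbb{Q}}[N_{\partial S}^{-1}\mathbbm{1}\{N_{\partial S}\ \text{odd}\}]$ stays bounded away from $0$ as $\sigma \to 0$. This estimate is the technical heart of the argument.

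The key observation for this expectation bound is that $T = \sigma$ is short while the force $(\log f_\sigma)'(q) = \sigma^{-2}\tanh(q/\sigma^2)$ is \emph{repulsive} from $q=0$: once a trajectory crosses the boundary, it tends not to return within time $T$. I will formalize this by a case analysis on the initial momentum $p$ conditional on $\{N_{\partial S}(p,q) \geq 1\}$, combined with conservation of energy to control the trajectory. Roughly, the event $\{N_{\partial S}(p,q) \geq 3\}$ requires the momentum to reverse sign within time $\sigma$ in a region where the force points away from the boundary, and a direct computation will show this occurs with probability bounded strictly less than $1$ under $\tilde{\P}$. I expect this step to be the main technical obstacle.

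For \eqref{EqMultiCheegHitting}, I will use a standard ``conductance implies hitting time" argument. Under $\pi_\sigma$ conditioned on $S$, the expected number of boundary crossings by step $n$ is at most $2n\Phi_\sigma$ by reversibility and the definition of the conductance, and concentration of these crossings yields both $\tau_x^{(\sigma)} > \Phi_\sigma^{-(1-\epsilon)}$ and $\tau_x^{(\sigma)} < \Phi_\sigma^{-(1+\epsilon)}$ with high probability. For a deterministic starting point $x \in (-\infty,0)$, a coupling to the stationary chain after an $O(\mathrm{poly}(\sigma^{-1}))$ burn-in suffices: the within-mode chain is approximately isotropic HMC targeting $\mathcal{N}(-1,\sigma^2)$ at its natural time-scale $T=\sigma$, and thus mixes in polynomially many steps.

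For \eqref{IneqRelMulti}, it suffices to show the spectral gap satisfies $\log\rho(K_\sigma) / \log\Phi_\sigma \to 1$. The easier direction uses the Rayleigh quotient with test function $f(q) = \mathrm{sign}(q)$: a direct computation via reversibility gives Dirichlet form $\mathcal{E}(f,f) = 4\pi_\sigma(S)\Phi_\sigma = 2\Phi_\sigma$ and $\mathrm{Var}_{\pi_\sigma}(f) \to 1$, so $\rho(K_\sigma) \leq 2\Phi_\sigma(1+o(1))$. For the reverse direction, Cheeger alone gives only $\rho \geq \Phi_\sigma^2/2$, a polynomial factor weaker. To close the gap I apply a decomposition theorem for spectral gaps (of Jerrum-Sinclair-Son-Vigoda or Madras-Randall type) with the two-piece decomposition $\mathbb{R} = S \sqcup S^c$: the restricted chains are approximately isotropic HMC on $\mathcal{N}(\pm 1,\sigma^2)$ at the matched time-scale and thus have gap $\Omega(1)$, while the two-state macro chain on $\{S, S^c\}$ has gap $\Theta(\Phi_\sigma)$. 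The decomposition theorem then gives $\rho(K_\sigma) = \Omega(\Phi_\sigma)$, which combined with the upper bound yields the desired limit.
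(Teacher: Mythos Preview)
Your upper bound for $\Phi_\sigma$ via Corollary \ref{cor:1} matches the paper exactly. For the matching lower bound, however, the paper takes a much simpler route than you propose: rather than controlling $\E_{\mathbb{Q}}[N_{\partial S}^{-1}\mathbbm{1}\{N_{\partial S}\ \text{odd}\}]$ under the tilted measure, it simply exhibits explicit intervals $I_\sigma = (-\sigma^{20},0)$ and $J_\sigma = (\sigma^{12},\sigma^{8})$ and checks, via the linear approximation Lemma \ref{LemmaLinApprox}, that $(q,p) \in I_\sigma \times J_\sigma$ forces $\gamma_{p,q}(\sigma) > 0$. This gives $\Phi_\sigma \geq \P[Q \in I_\sigma,\, P \in J_\sigma] = \Omega(\sigma^{32} e^{-1/(2\sigma^2)})$, and since only the log-scale limit is claimed, the polynomial prefactor is harmless. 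Your proposed route through the repulsive-force heuristic is more delicate (you yourself flag it as the main obstacle) and unnecessary for \eqref{EqMultiCheegAsym}. Incidentally, $(\log f_\sigma)'(q) = \sigma^{-2}(\tanh(q/\sigma^2) - q)$, not $\sigma^{-2}\tanh(q/\sigma^2)$; the qualitative repulsion from $0$ is still correct.

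For \eqref{EqMultiCheegHitting} and \eqref{IneqRelMulti}, your decomposition strategy is in the same spirit as the paper's, which invokes a general metastability framework (Lemmas \ref{LemmaMeta1} and \ref{LemmaMeta2}, imported from the companion paper \cite{mangoubi2018simple}) in place of Madras--Randall or JSSV. Both routes reduce to the same two ingredients: fast mixing of the chain restricted to one mode, and a between-mode transition rate of order $\Phi_\sigma$. You treat the first ingredient as essentially immediate (``approximately isotropic HMC targeting $\mathcal{N}(-1,\sigma^2)$, thus mixes in polynomially many steps''), but in the paper this is where most of the work lies. The restricted kernel $\hat{K}_\sigma$ is \emph{not} HMC for a log-concave target, since $f_\sigma$ fails to be log-concave in a neighbourhood of $0$. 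The paper therefore constructs an auxiliary log-concave target $\mu_{\sigma,\delta}$ by a quadratic continuation past $-2\delta$, couples $\hat{K}_\sigma$ to the HMC chain for $\mu_{\sigma,\delta}$, invokes the log-concave HMC bounds of \cite{mangoubi2017concave} for that chain, and then separately controls excursions into $(-2\delta,0)$ (via a random-walk domination argument) and below $-\sigma^{-10}$ (via a Lyapunov function); see Lemmas \ref{LemmaLyapunovEx} and \ref{ThmMulti1AppMainMixingMode}. Any decomposition-based proof would need essentially this same analysis to verify its restricted-chain hypothesis, so your sketch underestimates the main technical content of the argument.
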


We defer the proof to Appendix \ref{AppSubsecPfHmcMulti}. Note that this result exactly matches the spectral gap for optimally-tuned random-walk Metropolis algorithm with the same target distribution (see Theorem 3 of the our companion paper \cite{mangoubi2018simple}).

We also observe that this result implies the Cheeger constant $\Phi(K_{\sigma})$ of $K_{\sigma}$ is close to the bottleneck ratio $\Phi_{\sigma} = \Phi(K_{\sigma}, (-\infty,0))$ associated with the set $(-\infty,0)$, at least for $\sigma$ very small. The set $(-\infty,0)$ is of course a natural guess for the set with the ``worst" conductance, though we do not know of any simple argument that would prove something like this. In fact, deriving simple criteria to verify this fact was the motivation behind our companion paper \cite{mangoubi2018simple}.

\subsection{High Dimensional Analog} \label{sec:highdim} Almost all of the work in the proof of Theorem \ref{ThmHmcMultimodal} is checking that, in fact, the spectral gap is not much \textit{smaller} than the natural guess and upper bound $\Phi(K_{\sigma}, (-\infty,0))$; computing a sharp upper bound $\Phi(K_{\sigma}, (-\infty,0))$ itself is straightforward using Theorem \ref{thm:1}. This remains true in higher-dimensional examples. For example, considering the mixture distribution
\be 
\pi_{\sigma} = \frac{1}{2} \mathcal{N}((-1,0,\ldots,0),\sigma^2 \mathrm{Id}) + \frac{1}{2}\mathcal{N}((1,0,\ldots,0),\sigma^2 \mathrm{Id})
\ee  
on $\mathbb{R}^{d}$. It is natural to guess that, for the HMC algorithm with typical time step $T_{\sigma} = O(\sigma)$, the set $\{ x \in \mathbb{R}^{d} \, : \, x[1] < 0 \}$ has (nearly) the worst conductance. Theorem \ref{thm:1} can be used to get a very good estimate of this conductance, using the same calculation as the start of Theorem \ref{ThmHmcMultimodal}.

\subsection{Choice of Integration Time and Computational Cost}\label{sec:RemChoiceInt}
It is natural to ask: why do we study the choice $T_{\sigma} = \sigma$, rather than some other choice of integration time? The simplest answer is that it is impossible to substantially improve the performance of the algorithm by choosing a larger integration time. To make this statement precise, we note that the computational cost of running a single step of the HMC algorithm is approximately proportional to the integration time $T$. Thus, the effective computational cost of a sample from a target distribution $\pi$ is roughly the ratio of the integration time to the spectral gap (this is a standard way to compare the efficiency of MCMC algorithms with widely differing costs per step - see  \cite{bornn2017use,sherlock2017pseudo} and references therein). It is this effective computational cost that is bounded below: we always have
\be 
\limsup_{\sigma \rightarrow 0} 2 \sigma^{2} (\log(\lambda_{\sigma}) - \log(T_{\sigma})) \leq 1,
\ee for all $T_{\sigma} \geq \sigma$ (see discussion in Section \ref{SecHmcImp} for a proof of this fact). Thus, $T_{\sigma} = \sigma$ is essentially the optimal choice for $T_{\sigma}$ in this case.

Having said this, we find this simple answer slightly misleading. In practice, one never knows the optimal value of $T_{\sigma}$. Instead, one often tunes an MCMC algorithm according to some ``Goldilocks principle."\footnote{The first use of Goldilocks analogy in this context is by Jeff Rosenthal.} For HMC, one chooses $T_{\sigma}$  so that the probability of an HMC trajectory making a ``U-turn" (in the sense of \cite{NUTS}) is not too close to 0 and not too close to 1. In this example, this means choosig $T_{\sigma} = \Theta(\sigma)$. Note that this heuristic is very similar to the ``Goldilocks principle" for tuning RWM: one should choose the standard deviation of the proposal distribution so that the probability of rejecting a proposal is not too close to 0 and not too close to 1. This popular tuning choice is exactly the one that we study in our companion paper \cite{mangoubi2018simple}, even though it is \textit{not} close to optimal in that context.

\subsection{Application: HMC Targetting Degenerate Multivariate Gaussian} \label{SubsecAppDeg}

This section is motivated by the study of the standard HMC Algorithm \ref{alg:isotropic_HMC} with target distribution of the form $\mathcal{N}(0, M_{\sigma})$, where the 2-dimensional covariance matrix $M_{\sigma}$ is given by:  

\be 
   M_{\sigma}=
  \left[ {\begin{array}{cc}
   1 & 0 \\
   0 & \sigma^{2} \\
  \end{array} } \right].
\ee

We next observe that the target distribution $\mathcal{N}(0,M_{\sigma})$ is special: if $\{X_{t}\}_{t \in \mathbb{N}}$ is a Markov chain drawn from Algorithm \ref{alg:isotropic_HMC} targetting $\mathcal{N}(0,M_{\sigma})$, then the coordinate sequences $\{X_{t}[1]\}_{t \in \mathbb{N}}$ and $\{X_{t}[2]\}_{t \in \mathbb{N}}$ are each Markov chains as well. Furthermore, they both evolve independently, and they evolve according to Algorithm \ref{alg:isotropic_HMC}, with $\{X_{t}[1]\}_{t \in \mathbb{N}}$ targetting $\mathcal{N}(0,1)$ and $\{X_{t}[2]\}_{t \in \mathbb{N}}$ targetting $\mathcal{N}(0,\sigma^{2})$. Thus, rather than analyzing the full chain $\{X_{t}\}_{t \in \mathbb{N}}$, to calculate the spectral gap it is enough to analyze the slower-mixing marginal chain $\{X_{t}[1]\}_{t \in \mathbb{N}}$.

Denote by $K_{\sigma}$ the transition kernel associated with target distribution $\mathcal{N}(0,1)$ and integration time $T_{\sigma}$ satisfying $T_{\sigma} = o(1)$. Denote by $\Phi_{\sigma}$ the Cheeger constant associated with this transition kernel and the set $(-\infty,0)$; denote by $\rho_{\sigma}$ the spectral gap of $K_{\sigma}$. We have:

\begin{thm} \label{ThmHmcDegenerate}
The Cheeger constant $\Phi_{\sigma}$ satisfies 
\be  \label{EqMultiCheegDeg}
\lim_{\sigma \rightarrow 0} \frac{\log(\Phi_{\sigma})}{\log(T_{\sigma})} \leq 1.
\ee 
Furthermore, the relaxation time satisfies 
\be  \label{IneqRelDeg}
\limsup_{\sigma \rightarrow 0} \frac{\log(\rho_{\sigma})}{\log(T_{\sigma})} \geq \frac{1}{2}.
\ee
\end{thm}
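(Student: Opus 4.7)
The plan is to exploit the harmonic-oscillator structure of idealized HMC on $\pi=\mathcal{N}(0,1)$: with the standard Hamiltonian $H(p,q)=(p^2+q^2)/2$, Hamilton's equations are solved exactly by $\gamma_{p,q}(t)=q\cos t+p\sin t$. This simultaneously makes every quantity appearing in Theorem~\ref{thm:1} computable in closed form and provides a fully explicit one-step Markov kernel.

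\emph{For \eqref{EqMultiCheegDeg}.} I would apply Theorem~\ref{thm:1} with $S=(-\infty,0)$. The simplified flux formula \eqref{EqPosFluxConc2} gives $\Phi^+=(T_\sigma/2)\pi(0)=T_\sigma/(2\sqrt{2\pi})$, and $\pi(S)=1/2$. What remains is the evaluation of $\mathbb{E}_{\mathbb{Q}}[\mathbbm{1}\{N_{\partial S}\text{ odd}\}/N_{\partial S}]$. Writing $\gamma_{p,q}(t)=r\cos(t-\phi)$ in polar coordinates shows that consecutive zeros of $\gamma_{p,q}$ are separated by exactly $\pi$, so for $T_\sigma<\pi$ we have $N_{\partial S}(p,q)\in\{0,1\}$ for Lebesgue-a.e.\ $(p,q)$. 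Since $\mathbb{Q}$ is supported on $\{N_{\partial S}\ge 1\}$, this forces $N_{\partial S}=1$ $\mathbb{Q}$-a.s.\ and the expectation equals $1$. Substituting into Theorem~\ref{thm:1} yields the exact identity $\Phi_\sigma = T_\sigma/\sqrt{2\pi}$ for all sufficiently small $T_\sigma$; \eqref{EqMultiCheegDeg} then follows by taking $\log$ and dividing by $\log T_\sigma\to-\infty$.

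\emph{For \eqref{IneqRelDeg}.} The fastest route is Cheeger's upper bound in \eqref{IneqCheegPoin}, which together with Part~1 gives $\rho_\sigma\le 2\Phi(K_\sigma)\le 2\Phi_\sigma=O(T_\sigma)$, and hence $\log\rho_\sigma/\log T_\sigma\ge 1+o(1)\ge 1/2$. A sharper computation, consistent with the claim from the introduction that $\rho_\sigma\asymp\Phi_\sigma^2$, diagonalizes $K_\sigma$ directly: the harmonic-oscillator flow yields the AR(1) recursion $q_{i+1}=(\cos T_\sigma)q_i+(\sin T_\sigma)p_i$ with $p_i\stackrel{\text{iid}}{\sim}\mathcal{N}(0,1)$, which has $\mathcal{N}(0,1)$ as stationary distribution since $\cos^2 T_\sigma+\sin^2 T_\sigma=1$. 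Mehler's formula then identifies the (probabilist's) Hermite polynomials as a complete orthonormal eigenbasis of $K_\sigma$ in $L^2(\pi)$ with eigenvalues $(\cos T_\sigma)^n$, giving $\rho_\sigma=1-|\cos T_\sigma|\sim T_\sigma^2/2$ and $\log\rho_\sigma/\log T_\sigma\to 2$.

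The hard part is really Part~1: one must convert the simple ``at most one zero'' heuristic into a precise measure-theoretic statement by checking that the exceptional pairs $(p,q)$ whose trajectory has a zero at $t\in\{0,T_\sigma\}$ or is tangent to $\{0\}$ form a Lebesgue null set. This is already subsumed by Lemma~\ref{LemCountingPossible}. For the sharper form of Part~2 the only non-elementary input, the completeness of the Hermite polynomials in $L^2(\mathcal{N}(0,1))$, is a standard fact about the Ornstein--Uhlenbeck semigroup.
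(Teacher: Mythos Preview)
Your proposal is correct and follows essentially the same route as the paper, with a couple of genuine refinements worth noting.

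For \eqref{EqMultiCheegDeg}, both you and the paper apply Theorem~\ref{thm:1} with $S=(-\infty,0)$ after writing down the closed-form harmonic-oscillator solution $\gamma_{p,q}(t)=q\cos t+p\sin t$. The paper then simply bounds the correction factor $\mathbb{E}_{\mathbb{Q}}[N_{\partial S}^{-1}\mathbbm{1}\{N_{\partial S}\text{ odd}\}]\le 1$ and concludes $\Phi_\sigma\le cT_\sigma$. You go one step further: the observation that zeros of $r\cos(t-\phi)$ are exactly $\pi$-separated, so $N_{\partial S}\in\{0,1\}$ for $T_\sigma<\pi$, pins the correction factor down to exactly $1$ and yields the sharp identity $\Phi_\sigma=T_\sigma/\sqrt{2\pi}$. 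This is a nice strengthening of the paper's argument at essentially no extra cost.

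For \eqref{IneqRelDeg}, the paper's proof is to test the single function $f(x)=x$ and compute $(K_\sigma f)(x)=x\cos T_\sigma$, giving $\rho_\sigma\le 1-\cos T_\sigma$. Your Mehler/Hermite diagonalization is the natural completion of this idea (the paper's $f$ is the first Hermite polynomial $H_1$), and it upgrades the inequality to the exact value $\rho_\sigma=1-|\cos T_\sigma|$. Your alternative route via Cheeger's upper bound, $\rho_\sigma\le 2\Phi(K_\sigma)\le 2\Phi_\sigma=O(T_\sigma)$, is a genuinely different and more elementary argument that does not appear in the paper; it already delivers $\limsup\log\rho_\sigma/\log T_\sigma\ge 1$, comfortably exceeding the stated bound $\ge 1/2$, without touching any eigenfunction.
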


The proof is deferred to the appendix. Note that we do not give a bound for the hitting time in this example. This is not an accident: the target distribution is unimodal, and so the HMC algorithm does not exhibit metastability and the fluctuations of the hitting time remain large relative to the relaxation time as $\sigma$ goes to 0. 

\begin{remark}
This example is studied in Example 2.1 of \cite{rabee2018HMCcoup}. We observe that our results are sharper in a few ways (we allow larger integration times; we obtain the correct order of the dependence of the spectral gap on $\sigma$), but are also the results of exact computations rather than general theorems.

We mention that our own previous work \cite{mangoubi2017concave} and that of \cite{holmes2014curvature} gives similar (non-sharp) estimates to \cite{rabee2018HMCcoup} in this example. We think that finding general bounds that give sharp answers in this prototypical case to be an interesting problem.
\end{remark}

\subsection{Numerical Simulation: Spectral Gap of Hamiltonian Monte Carlo} \label{SubsecAppNum}

In this section we plot the spectral gap (Figure \ref{figure:simulation}) of an Isotropic-Momentum HMC algorithm sampling a two-mode density; the plot was generated by numerically diagonalizing an analytical solution for the transition matrix of the HMC Markov chain.  The results of the calculation agree closely with the upper bounds on the spectral gap given by applying Corollary \ref{cor:1} with Inequality \eqref{IneqCheegPoin}.

In this simulation we computed the spectral gap for the Isotropic-Momentum HMC algorithm with the stationary distributions of the form $\pi_{a}(q) = \frac{1}{2F_{\mathcal{N}(0,1)}(a)}\mathrm{max}(f_{\mathcal{N}(0,1)}(q-a),f_{\mathcal{N}(0,1)}(q+a))$, where $f_{\mathcal{N}(0,1)}$ and $F_{\mathcal{N}(0,1)}$ denote the CDF and PDF of the standard normal density.  Note that $\pi_{a}(q)$ approximates the Gaussian mixture model $\tilde{\pi}_{a}(q) =  \frac{1}{2}f_{\mathcal{N}(0,1)}(q-a) + \frac{1}{2}f_{\mathcal{N}(0,1)}(q+a)$; indeed $\lim_{a \rightarrow \infty} \| \pi_{a} - \tilde{\pi}_{a} \|_{\mathrm{TV}} = 0$.

As suggested by the formula for the conductance in Theorem \ref{thm:1}, the spectral gap is bounded above by a linear function of $T$, and in fact increases approximately linearly with $T$ when $a=0$ for $T\leq \frac{\pi}{2}$. Note that, for fixed $a$ small, the spectral gap $(1 - \lambda_{2})$ looks like a periodic function of $T$. This is due to the fact that the trajectories themselves are very close to periodic with period $\geq\frac{\pi}{2}$, meaning that the expectation term in Equation \eqref{eq:HMC_traditional_Cheeger} also varies (approximately) periodically with T.  The exponential decay in $a^2$ is explained by considering the set $S = (-\infty,0)$ and noting that $\int_{\partial S} \pi_{a}(q) \mathrm{d}q = f_{\mathcal{N}(0,1)}(q-a)$, so the corresponding term in Corollary \ref{cor:1} of Theorem \ref{thm:1}  decreases exponentially in $a^2$.

\begin{figure}[H]
\includegraphics[scale=0.3]{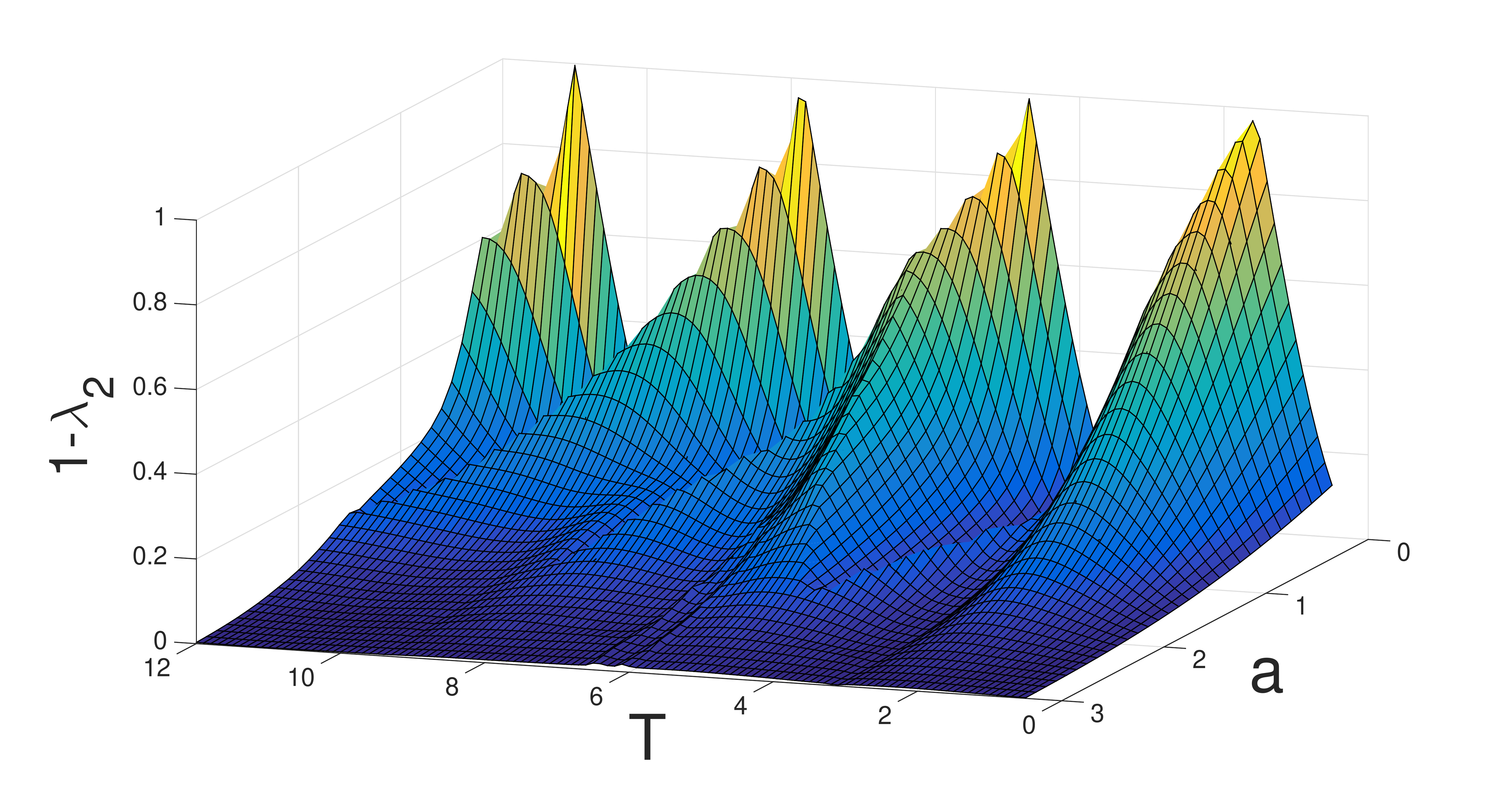}
\caption{The spectral gap for the Isotropic-Momentum HMC algorithm with stationary distribution $\pi(q) = \frac{1}{2F_{\mathcal{N}(0,1)}(a)}\mathrm{max}(f_{\mathcal{N}(0,1)}(q-a),f_{\mathcal{N}(0,1)}(q-a))$, for different inter-modal distances $2a$  and different Hamiltonian trajectory times $T$.  The results agree closely with the bound in Theorem \ref{thm:1}.\label{figure:simulation}}
\end{figure}

\section{Discussion and Open Problems} \label{SecCompMulti} 

We give some consequences of Theorem \ref{thm:1}, and discuss open questions related to the performance of HMC.

\subsection{Bounds on HMC Improvements} \label{SecHmcImp}

One immediate consequence of Theorem \ref{thm:1} and Lemma \ref{LemmSuffLoc} is that it is not possible to dramatically improve the performance of Algorithm \ref{alg:isotropic_HMC} by tuning the trajectory integration time $T$. We give a quick discussion of this fact and some related open problems.

Fix a probability distribution  with twice-differentiable density $\pi$ and denote by $K_{T}$ the transition kernel defined by Algorithm \ref{alg:isotropic_HMC} with parameters $\pi$ and $T > 0$. Define 
\be 
\Phi_{0}(\pi,S) &= \frac{1}{2} \frac{\int_{\partial S} \pi(q) \mathrm{d}q  }{\pi(S)}\\
\Phi_{0}(\pi) &= \inf_{S \in \mathfrak{A}}  \Phi_{0}(\pi,S), \\
\ee 
for $S \in \mathfrak{A}$. By Corollary \ref{cor:1} of Theorem \ref{thm:1}, combined with Lemma \ref{LemmSuffLoc}, we have the upper bound
\be \label{IneqLinIncCond}
T^{-1} \Phi(K_{T}) \leq  \Phi_{0}(\pi),
\ee 
so that $\Phi(K_{T})$ is bounded by a linear function in the integration time $T$. 

This observation is already in stark contrast to RWM as given in Algorithm \ref{alg:RWM}, whose tuning parameter $\epsilon > 0$ can have very large impacts on performance. As an illustration of this large change in performance, consider the family of transition kernels $\{ Q_{\sigma} \}_{0<\sigma <1}$ given by Algorithm \ref{alg:RWM} with target distribution $\pi_{\sigma}$ given in Equation \eqref{EqDefMixDistBasic} and tuning parameter $\epsilon \equiv 10$ for all $\sigma$. It is a straightforward exercise \footnote{ For example, one could apply Theorem 5 of \cite{rosenthal1995minorization} with ``small set" $(-5,5)$ and ``Lyapunov function" $V(x) = e^{\|x\|}$ to obtain a bound on the rate of convergence of the walk to stationarity. One could then apply Theorem 2.1 of \cite{roberts1997geometric} to convert this bound on the convergence rate to a bound on the spectral gap.} to check that $\log(\rho(Q_{\sigma})^{-1}) = O(\log(\sigma^{-1}))$ for $\sigma$ small. Comparing this with the result of Theorem 3 of \cite{mangoubi2018simple}, we see that changing the tuning parameter $\epsilon$ of Algorithm \ref{alg:RWM} by a factor on the order of $\sigma$ will vastly decrease the size of the log of the relaxation time, from $\Theta(\sigma^{-2})$  to $O(-\log(\sigma))$. 

This dramatic performance improvement does not have obvious practical applications, since choosing a good tuning parameter for multimodal targets requires a more detailed understanding of the mode than a user will typically have. Our point is just that such dramatic improvements are possible for RWM with multimodal targets, while they are not possible for HMC.

We interpret the linear bound \eqref{IneqLinIncCond} in terms of computational cost. Roughly speaking, the computational cost of a step of Algorithm \ref{alg:isotropic_HMC} is  proportional to the integration time $T$. Paraphrasing Inequality \eqref{IneqLinIncCond}, it is impossible to improve the cost-normalized conductance of Algorithm \ref{alg:isotropic_HMC} by increasing the integration time $T$. This immediately implies, via Cheeger's inequality \eqref{IneqCheegPoin}, that the  spectral gap is bounded by a quadratic in $T$. We point out that this behaviour is very similar to that of ``lifted" Markov chains (see \cite{chen1999lifting} for a definition). Roughly speaking, like HMC, ``lifted" Markov chains attempt to combat diffusive behaviour of RWM by adding an abstract notion of ``momentum." The central idea behind the non-improvement theorems for lifted chains in \cite{chen1999lifting,ramanan2017bounds} is that it is impossible to increase the conductance of a Markov chain by ``lifting" it. By Cheeger's inequality, this gives an upper bound on the best-possible improvement due to lifting.

We close this discussion with an open question. We have shown that the conductance \textit{of a given set} can increase at most \textit{linearly} in $T$. Via Cheeger's inequality \eqref{IneqCheegPoin}, this suggests that the spectral gap should satisfy a similar \textit{quadratic} inequality in $T$. To make this rigorous, it would be sufficient to show:

\textbf{Open Problem 1:} Prove the limit
\be \label{EqLimOpenQuestion}
\lim_{T \rightarrow 0} \inf_{S \in \mathfrak{A}} T^{-1} \Phi(K_{T},S) = \inf_{S \in \mathfrak{A}} \Phi_{0}(\pi,S). 
\ee

Note that Equality \eqref{IneqE4Small} already implies that, for fixed $S \in \mathfrak{A}$, 
\be 
\lim_{T \rightarrow 0}  T^{-1} \Phi(K_{T},S) =  \Phi_{0}(\pi,S). 
\ee

\subsection{Multimodal Targets}

In Theorem \ref{ThmHmcMultimodal} and Theorem 3 of the companion paper \cite{mangoubi2018simple}, we showed that Algorithm \ref{alg:isotropic_HMC} has very similar performance to Algorithm \ref{alg:RWM} for a strongly multimodal example. However, our comparison is not direct: we prove that the two algorithms have similar spectral gaps by laboriously computing the spectral gaps of both algorithms. We suspect that this behaviour is quite general, and propose the following informal problem:

\textbf{Open Question 2:} Let $\pi_{\sigma}$ be a mixture distribution of the form 
\be 
\pi_{\sigma}(x) \propto \sum_{i=1}^{k} \mu_{i} f_{i}(\frac{x}{\sigma}),
\ee 
where $f_{1},\ldots,f_{k}$ are the densities of probability distributions and $\mu_{1},\ldots,\mu_{k} \geq 0$ are fixed weights that sum to $\sum_{i=1}^{k} \mu_{i} = 1$, $a_{1},\ldots,a_{k}$.

Let $K_{\sigma}$, $Q_{\sigma}$ be the transition kernels of Algorithms \ref{alg:isotropic_HMC} and \ref{alg:RWM} respectively, with target distribution $\pi_{\sigma}$, integration time $T_{\sigma} \propto \sigma$ and standard deviation $\epsilon_{\sigma} \propto \sigma$. We wish to know: what are sufficient conditions for $\{ \mu_{i}\}_{i=1}^{k}$ so that 
\be \label{LimConj2Strong}
\lim_{\sigma \rightarrow 0} \frac{\log(\rho(K_{\sigma}))}{\log(\rho(Q_{\sigma}))} \geq 1?
\ee 

Note that the constant ``1" in the limit \eqref{LimConj2Strong} is important. We  expect all algorithms to perform poorly for multimodal targets, and thus for the limit \eqref{LimConj2Strong} to be strictly greater than 0. A limit of ``1" would suggest that HMC exhibits \textit{no} asymptotic improvement over RWM; a limit of \textit{e.g.} $\frac{1}{2}$ would suggest a quadratic improvement over RWM, which is substantial.

We conjecture that Inequality \eqref{LimConj2Strong} holds as long as the level sets of $f_{i}$ are fairly ball-like; for example, if the sets $S_{i,C} \equiv \{x \, : \, f_{i}(x) \leq C\}$ are all convex, with the ratio of the inner and outer radii
\be 
r_{\mathrm{inner}}(i,C) &\equiv \sup\{r \, : \, \exists \, x \in S_{i,C} \text{ s.t. } B_{r}(x) \subset S_{i,C} \} \\
r_{\mathrm{outer}}(i,C) &\equiv \inf\{r \, : \, \exists \, x \in S_{i,C} \text{ s.t. } B_{r}(x) \supset S_{i,C} \} \\
\ee 
uniformly bounded.  We mention that we do not expect this to hold in complete generality. For example, if the level sets $S_{i,C}$ of $\pi_{\sigma}$ for $0 <C \ll e^{\sigma^{-1}}$ look like Figure \ref{FigBanana},


More generally, we are interested in determining which ``momentum-based" algorithms satisfy the limit \eqref{LimConj2Strong}. We note that not all similar chains satisfy this equality. In particular, lifted Markov chains can replace the ``1" with a ``$\frac{1}{2}$" for realistic examples (see \textit{e.g.} \cite{bierkens2017piecewise}). It would be valuable to determine if other generic algorithms, such as Algorithm \ref{alg:Riemannian_algorithm} or \cite{bierkens2016zig}, can also achieve this.

Finally, we suggest a more straightforward question. The level sets of the multimodal densities studied in this paper are \textit{completely disconnected} below some (fairly large) threshold. Thus, the multimodality is essentially invisible to the gradient of the log-likelihood within each mode. Can HMC, and especially Riemannian Manifold HMC under an appropriate implementable choice of Riemannian metric, improve on MH for multimodal examples in which the level sets are \textit{connected}, and multimodality is induced by these level sets being merely \textit{very narrow}? Note that this is only possible in two or more dimensions.

\section*{Acknowledgement}
We would like to thank Neil Shephard for asking us the title question.
NSP thanks Gareth Roberts and Andrew Stuart for introducing him to the optimal scaling of MCMC algorithms, and Jeff Rosenthal for references. Part of this work was done when OM was a Ph.D. student at MIT working on this project under the supervision of NSP at Harvard University, and later when he was a postdoctoral researcher with AS at the University of Ottawa. 
We thank these institutions for their hospitality.

\bibliographystyle{alpha}
\bibliography{HMC}

\newpage 

\appendix

\section{Main Theoretical Results}

We prove our main theoretical bounds.

\subsection{Proof of Lemma \ref{LemCountingPossible}} \label{SecCountingPossible}

Define $F \, : \, \mathbb{R}^{2d} \times \mathbb{R} \mapsto \mathbb{R}^{d}$ by
\be \label{EqDefF}
F(p,q,t) = \gamma_{p,q}(t).
\ee
Since $dF$ is full rank at all points, it is immediate that $F$ is transverse to $\partial S$. Since the domain of $F$ does not have a boundary, this implies by the parametric transversality theorem that the function $\gamma_{p,q}(\cdot)$ is transerve to $\partial S$ for almost all values of $(p,q) \in \mathbb{R}^{2d}$. 

Fix $(p,q)$ for which $F(p,q,\cdot)$ is transverse to $\partial S$. Since $\gamma_{p,q}(\cdot)$ is transverse to $\partial S$, the intersection $\gamma_{p,q}([0,T]) \cap \partial S$ must be a zero-dimensional manifold. Since $[0,T]$ is compact, and $\gamma_{p,q}(\cdot)$ is a continuous function by assumption, $\gamma_{p,q}([0,T])$ must also be compact. Since $\partial S$ is closed, $\gamma_{p,q}([0,T]) \cap \partial S$ is also compact. Since $\gamma_{p,q}(\cdot)$ is tranverse to $\partial S$, at all points $0 < t < T$ there exists $\epsilon > 0$ so that  $|\gamma_{p,q}((t- \epsilon, t + \epsilon)) \cap \partial S| \leq 1$ - in other words, points of the intersection $\gamma_{p,q}([0,T]) \cap \partial S$ are isolated. Since the intersection $\gamma_{p,q}([0,T]) \cap \partial S$ is compact and has isolated points, we conclude that $\gamma_{p,q}([0,T]) \cap \partial S$ must be finite (since every open cover of the intersection has a finite subcover).

Putting these facts together, we see that for almost every value of $(p,q) \in \mathbb{R}^{2d}$, we have both:

\begin{enumerate}
\item $F(p,q,\cdot)$ is transverse to $\partial S$, and
\item The set $\gamma_{p,q}([0,T]) \cap \partial S$ is finite.
\end{enumerate}

This completes the proof.

\subsection{Proof of Theorem \ref{thm:1}} \label{SecPfThm1}

The proof is based on the following observations:

\begin{enumerate}
\item  Taking a step of HMC involves constructing an entire solution \textit{path} $\gamma_{p,q} \, : \, [0,T] \mapsto \mathbb{R}^{d}$. Thus, as long as the path is sufficiently well-behaved, we can determine the probability of crossing from $S$ to $S^{c}$ by computing the probability that the size of the intersection $|\gamma_{p,q}([0,T]) \cap \partial S|$ of the path $\gamma_{p,q}$ with the boundary $\partial S$ is odd.
\item The expected \textit{rate} at which random paths cross $\partial S$ is constant.
\end{enumerate} 

These suggest the conductance of HMC can be expressed in terms of the rate at which very short random paths cross $\partial S$. 

\begin{proof} [Proof of Theorem \ref{thm:1}]

Let $(P,Q) \sim \mu_{H}$ and let $(\hat{P},\hat{Q}) \sim \mathbb{Q}$. Define the total positive flux $\Phi^{+}$ across $\partial S$ by
\be \label{eq:a1}
\Phi^{+} := \frac{1}{2} \sum_{n=1,2,3,\ldots} n \, \P(N_{\partial S}(P,Q)=n) = \frac{1}{2} \E(N_{\partial S}(P,Q)).
\ee

Note that, by reversibility and the fact that $\partial S$ has measure 0, we have 

\be 
\P(\gamma_{P,Q}(T) \notin S, Q \in S) = \P(\gamma_{P,Q}(T) \in S, Q \notin S)
\ee 

and
\be \label{eq:a2}
\mathbb{P}(N_{\partial S}(P,Q)=n, Q \in S) = \mathbb{P}(N_{\partial S}(P,Q)=n, Q \in S^c) = \frac{1}{2}\mathbb{P}(N_{\partial S}(P,Q)=n).
\ee 

This implies 

\be \label{eq:a3}
\sum_{n=1,3,5,\ldots} \P( N_{\partial S}(P,Q) = n) &\stackrel{{\scriptsize \textrm{Eq. }}\eqref{eq:a2}}{=}  \sum_{n=1,3,5,\ldots}(\mathbb{P}(N_{\partial S}(P,Q)=n, Q\in S)\\&  \qquad \qquad \qquad \qquad + \mathbb{P}(N_{\partial S}(P,Q)=n, Q \in S^c))\\
& = 2 \sum_{n=1,3,5,\ldots} \P(N_{\partial S}(P,Q) =n, Q \in S),
\ee

This allows us to compute

\be
\mathbb{P}(\gamma_{P,Q}(T) \in S^c, \, Q \in S) &= \sum_{n=1,3,\ldots} \mathbb{P}(N_{\partial S}(P,Q) =n, Q \in S) \\
&\stackrel{{\scriptsize \textrm{Eq. }}\eqref{eq:a3}}{=} \frac{1}{2}\sum_{n=1,3,\ldots} \mathbb{P}(N_{\partial S}(P,Q)=n) \\
&= \frac{1}{2}\sum_{n=1,3,\ldots} \frac{2\Phi^+}{n} \P(N_{\partial S}(\hat{P},\hat{Q})=n) \\
&\stackrel{{\scriptsize \textrm{Eq. }}\eqref{eq:a1}}{=} \Phi^+ \cdot \sum_{n=1,3,\ldots} \frac{1}{n} \P(N_{\partial S}(\hat{P},\hat{Q})=n) \\
&= \Phi^+ \cdot \E \bigg[\frac{1}{N_{\partial S}(\hat{P},\hat{Q})} \cdot \mathbbm{1}\{N_{\partial S}(\hat{P},\hat{Q}) \, \mathrm{odd}\}\bigg].
\ee

Next, we must compute $\Phi^+$. For $0 < a < c < \infty$, define
\be 
N_{\partial S}(p,q,a,c) \equiv |\{ t \in [a,c] \, : \, \gamma_{p,q}(t) \in \partial S\}|.
\ee

Recall that, by the conservation of volume formula in Equation \eqref{EqConsVol}, we have 
\be 
\E[N_{\partial S} (P,Q,a,c)] =  \E[N_{\partial S} (P,Q,0,b-a)] + \E[N_{\partial S} (P,Q,0,c-b)]
\ee 
for all $0 < a < b < c < \infty$; note that all three expectations are well-defined by Lemma \ref{LemCountingPossible}. Viewing $\Phi^{+} = \Phi^{+}(T)$ as a function of the integration time $T \in \mathbb{R}^{+}$, this decomposition formula implies that $\frac{d \Phi^{+}(t)}{dt}$ is constant. We must now compute the derivative

\be  \label{EqDerivCalc}
\frac{d \Phi^{+}(t)}{dt} = \frac{1}{2} \lim_{h \rightarrow 0} h^{-1}  \E [N_{\partial S}(P,Q,0,h)],
\ee

which computes the number of times that very short Hamiltonian paths cross the surface $\partial S$. Since the paths are very short and neither the paths nor $\partial S$ ``bend" very much, linearizing and ignoring multiple crossings suggests 
\be 
\frac{d \Phi^{+}(t)}{dt} = \int_{\partial S} \int_{\mathbb{R}^d}  \mu_{H}(p,q) \cdot \langle v^+(p,q), \eta(q) \rangle \mathrm{d}q,
\ee 

where 
\be  \label{EqDefVPLate}
v^+(p,q) = \gamma_{p,q}'(0) \, \mathbbm{1} \{ \langle \gamma_{p,q}'(0), \eta(q) \rangle \geq 0\}.
\ee 

This turns out to be correct, but we don't know a standard reference for this fact. We summarize it as the following technical lemma, whose proof is deferred to Appendix \ref{AppSubsecTechIntLemmaProof}:

\begin{lemma} \label{LemmaTechIntLemma}
With notation as above,
\be
\Phi^+ = T \cdot \int_{\partial S} \int_{\mathbb{R}^d}  \mu_{H}(p,q) \cdot \langle v^+(p,q), \eta(q) \rangle \mathrm{d}q.
\ee
\end{lemma}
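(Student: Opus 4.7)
The plan is to reduce to a pointwise local rate computation: I will show that
\begin{equation*}
\lim_{h \to 0^+} h^{-1} \mathbb{E}[N_{\partial S}(P,Q,0,h)] = \int_{\partial S} \int_{\mathbb{R}^d} \mu_{H}(p,q)\, |p_{q}|\, dp\, d\mathrm{vol}_{\partial S}(q),
\end{equation*}
after which the lemma follows from Equation \eqref{EqDerivCalc} (to produce the factor $\tfrac{1}{2}$), from the already-established linearity of $\Phi^{+}(t)$ in $t$ (so $\Phi^{+}(T) = T \cdot \Phi^{+}{}'(0)$), and from the symmetry $\mu_{H}(p,q) = \mu_{H}(-p,q)$, which turns $\tfrac{1}{2}|p_{q}|$ into $\langle v^{+}(p,q), \eta(q)\rangle$ after integration in $p$. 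This reduces the entire statement to a single asymptotic crossing estimate.

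To establish the crossing-rate formula I would localize. First, using the Gaussian tails of $\mu_{H}$ in $p$ (which for Algorithm \ref{alg:Riemannian_algorithm} uses Assumption \ref{AssumptionBoundFIM} to bound $G^{-1}(q)$ from above on compacta), for any $\epsilon > 0$ there exist a compact $A \subset \mathbb{R}^{d}$ and $R < \infty$ so that the contributions from $\{q \notin A\}$ or $\{|p| > R\}$ to either side of the target identity are at most $\epsilon$, uniformly in $h \in (0,1]$. Next, Assumption \ref{DefLocallyWellBehaved} lets me cover $\partial S \cap A$ by finitely many coordinate charts on which the tubular map $(q_{0},s) \mapsto q_{0} + s\,\eta(q_{0})$ is a smooth diffeomorphism onto a one-sided neighborhood of $\partial S$, with Jacobian tending to $1$ as the tubular radius shrinks (the quadratic approximation bound in Assumption \ref{DefLocallyWellBehaved} is exactly what is needed to control this Jacobian).

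Inside such a chart, I would linearize Hamilton's equations. For $(p,q_{0})$ in a compact set and small $h$, $C^{2}$-regularity of the Hamiltonian gives
\begin{equation*}
\gamma_{p,q_{0}+s\eta(q_{0})}(t) \;=\; q_{0} + s\,\eta(q_{0}) + t\, G^{-1}(q_{0})p + O(t^{2} + ts)
\end{equation*}
uniformly on the chart, so the signed distance to $\partial S$ along the trajectory is $\sigma(t) = s + t\, p_{q_{0}} + O(t^{2} + ts + s^{2})$. When $|p_{q_{0}}| \geq \delta > 0$, for $h$ sufficiently small this is monotone on $[0,h]$, and $\sigma$ has exactly one zero in $[0,h]$ iff $-s/p_{q_{0}} \in (0,h)$, i.e.\ iff $s$ lies in an interval of length $h |p_{q_{0}}|$. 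Integrating over $s$ in the tubular direction therefore produces a contribution of $h|p_{q_{0}}|(1+o(1))$ per $(p,q_{0})$, and integrating over $p$ and $q_{0} \in \partial S \cap A$ yields the claimed expression up to $o(h)$ error.

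The main obstacle is the near-tangential regime $|p_{q_{0}}| < \delta$, where the higher-order terms in $\sigma(t)$ can create or destroy crossings and the single-zero argument above breaks down. I would handle this by splitting the $p$-integration at $\{|p_{q_{0}}| = \delta\}$: on $\{|p_{q_{0}}| \geq \delta\}$ the quantitative linearization above gives the desired count, while $\{|p_{q_{0}}| < \delta\}$ has $\mu_{H}$-measure $O(\delta)$ uniformly in $q_{0} \in A$, and contributes at most $O(h\delta)$ to both sides (for the expectation side, one uses the crude bound $N_{\partial S}(p,q,0,h) \leq C(p,q)\, h$ valid on any compact region by $C^{2}$-smoothness of the trajectory, combined with Lemma \ref{LemCountingPossible} to rule out pathological initial data). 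Sending $h \to 0$, then $\delta \to 0$, then $A \uparrow \mathbb{R}^{d}$ and $R \to \infty$ by dominated convergence finishes the estimate and thus the lemma.
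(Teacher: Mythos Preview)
Your approach is essentially the paper's: localize in phase space, linearize both the Hamiltonian trajectory and $\partial S$, compute the single-crossing contribution directly in tubular coordinates, and treat multiple-crossing / near-tangential initial data as a negligible error. The single-crossing computation in the non-tangential regime $\{|p_{q_{0}}|\geq\delta\}$ is correct and matches what the paper does.

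The gap is the bound on the near-tangential contribution. Your justification, ``$N_{\partial S}(p,q,0,h) \leq C(p,q)\, h$,'' is false: $N_{\partial S}$ is integer-valued, so for any $(p,q)$ whose trajectory meets $\partial S$ at some time $t_{0}>0$ one has $N_{\partial S}(p,q,0,h)\geq 1$ for all $h>t_{0}$, and $N/h$ is unbounded as $h\downarrow t_{0}$. Lemma \ref{LemCountingPossible} gives only almost-sure finiteness of crossings, no quantitative rate. Without such a bound you cannot conclude $\mathbb{E}[N_{\partial S}\,\mathbbm{1}_{|p_{q_{0}}|<\delta}] = O(h\delta)$: near-tangential trajectories starting close to $\partial S$ can have many crossings in $[0,h]$ (e.g.\ a signed-distance profile $\sigma(t)=n^{-2}\sin(nt)$ has $|\sigma''|\leq 1$ but $\Theta(nh)$ zeros).

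This is exactly the point the paper works hardest on. Its proof of Inequality \eqref{IneqE4Small} shows that multiple crossings in $[0,h]$ force \emph{two} simultaneous constraints: $|\langle\gamma'_{p,q}(0),\eta(s(q))\rangle| = O(h^{0.9})$ \emph{and} $d(q,\partial S)=O(h^{0.9})$, each obtained from Assumption \ref{DefLocallyWellBehaved} and Lemma \ref{LemmaLinApprox}. The intersection then has probability $O(h^{1.8})=o(h)$. Your $\delta$-splitting uses only the first constraint; you need the second to close the upper bound on the crossing rate. A related issue: your uniform-in-$h$ tail truncation (``contributions from $\{q\notin A\}$ or $\{|p|>R\}$ are at most $\epsilon$'') for the expectation side requires the same missing control on $N$; the paper avoids this by localizing only in $q$ and passing the sum over slabs through the limit via monotone convergence.
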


Applying Hamilton's equations \eqref{EqHamEq}, we have 

\be  
\langle v^+(p,q) , \eta(q) \rangle =|p_q|\cdot \mathbbm{1} \{ p \in \mathcal{P}^+_S(q) \},
\ee 
and so by Lemma \ref{LemmaTechIntLemma}
\be\label{eq:32}
\Phi^+ = T \cdot \int_{\partial S} \int_{\mathcal{P}^+_S(q)} \pi(p,q) \cdot |p_q| \mathrm{d}q.
\ee
This completes the proof of Equation \eqref{EqPosFluxConc1}. In the case of Isotropic-Momentum HMC, Equation \eqref{eq:32} simplifies to 

\be
\Phi^+ &= T \cdot \int_{\partial S} \int_{\mathcal{P}^+_S(q)} \pi(p,q) \cdot |p_q| \mathrm{d}q\\
&= T \cdot \int_{\partial S} \int_0^\infty \pi(q)   \frac{1}{\sqrt{2 \pi}} y e^{-\frac{1}{2}y^2} \cdot y \mathrm{d}y \mathrm{d}q\\
&= T  \cdot \int_{\partial S} \pi(q) \mathrm{d}q \cdot \frac{1}{2}. \\
\ee
This completes the proof of the theorem.
\end{proof}

\subsection{Proof of Lemma \ref{LemmaTechIntLemma}} \label{AppSubsecTechIntLemmaProof}

We prove Lemma \ref{LemmaTechIntLemma}, a technical bound used in the proof of Theorem \ref{thm:1}. Before doing so, we need a bound on the difference between the Hamiltonian path $\gamma_{p,q}(t)$ and its obvious linear approximation, defined by 
\be
\hat{\gamma}_{p,q}(t) = \gamma_{p,q}(0) + t \, \gamma_{p,q}'(0).
\ee 

The following bound will be used frequently in the proof of Theorem \ref{thm:1}:

\begin{lemma} \label{LemmaLinApprox}
Fix initial points $(p,q) \in \mathbb{R}^{2d}$, let $H$ be a smooth Hamiltonian,  let $\{(p(t),q(t))\}_{t \geq 0}$ be the solution to Hamilton's equations \eqref{EqHamEq} associated with this Hamiltonian and initial conditions, and define $\gamma_{p,q}$ as in the remainder of the paper.  Assume that there exists some set $\mathcal{X} \subset \mathbb{R}^{d}$ and some constant $0 < C < \infty$ so that $\sup_{p,q \in \mathcal{X}} \| \frac{\partial}{\partial q} H(p,q) \| < C$. Let $\tau_{\mathrm{exit}} = \inf \{ t \geq 0 \, : \, \gamma_{p,q} \notin \mathcal{X}\}$.  Then for all $0 \leq s \leq t \leq \tau_{\mathrm{exit}}$, 

\be \label{IneqLinApproxPosGen}
\max_{s \leq u \leq t} \| \gamma_{p,q}(u) - \hat{\gamma}_{\gamma_{p,q}'(s), \gamma_{p,q}(s)}(u-s) \| \leq \frac{C}{2}(t-s)^{2}
\ee

and 

\be \label{IneqLinApproxMomGen}
\max_{s \leq u \leq t} \| \gamma_{p,q}'(u) - \hat{\gamma}_{p,q}'(s) \| \leq C (t-s).
\ee

\end{lemma}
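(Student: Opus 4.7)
The two inequalities should follow from Taylor's theorem with integral remainder, applied to the $C^2$ curve $u \mapsto \gamma_{p,q}(u)$, together with Hamilton's equations to identify $\gamma_{p,q}''$. Concretely, since $\gamma_{p,q}(u) = q(u)$ and Hamilton's equations give $q'(u) = \partial_p H(p(u),q(u))$, differentiating once more yields $\gamma_{p,q}''(u) = \tfrac{d}{du}\bigl[\partial_p H(p(u),q(u))\bigr]$. For the standard Hamiltonian of Equation \eqref{EqStandardHam} this simplifies to $\gamma_{p,q}''(u) = \dot p(u) = -\partial_q H(p(u),q(u))$, so the hypothesis $\sup_{\mathcal{X}} \|\partial_q H\| < C$ gives the pointwise bound $\|\gamma_{p,q}''(u)\| \le C$ for all $u \in [s,t] \subset [0,\tau_{\mathrm{exit}}]$.

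With this second-derivative bound in hand, both inequalities are one-line consequences of the fundamental theorem of calculus. First I would write
\[
\gamma_{p,q}'(u) - \gamma_{p,q}'(s) \;=\; \int_s^u \gamma_{p,q}''(r)\,dr,
\]
so the triangle inequality gives $\|\gamma_{p,q}'(u) - \gamma_{p,q}'(s)\| \le C(u-s) \le C(t-s)$, which is Inequality \eqref{IneqLinApproxMomGen} after observing that $\hat\gamma_{\gamma_{p,q}'(s),\gamma_{p,q}(s)}'(u-s) = \gamma_{p,q}'(s)$ by definition of the linear approximation (so I am reading the statement's $\hat\gamma_{p,q}'(s)$ as shorthand for this re-based linear path). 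Iterating by integrating once more in the Taylor-remainder form
\[
\gamma_{p,q}(u) - \gamma_{p,q}(s) - (u-s)\gamma_{p,q}'(s) \;=\; \int_s^u (u-r)\,\gamma_{p,q}''(r)\,dr,
\]
the same pointwise bound gives $\|\cdot\| \le \tfrac{C}{2}(u-s)^2 \le \tfrac{C}{2}(t-s)^2$. Taking the supremum over $u \in [s,t]$ yields Inequality \eqref{IneqLinApproxPosGen} because the left-hand side equals $\gamma_{p,q}(u) - \hat\gamma_{\gamma_{p,q}'(s),\gamma_{p,q}(s)}(u-s)$.

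The only real obstacle is the Riemannian case of Algorithm \ref{alg:Riemannian_algorithm}, where $\partial_p H = G^{-1}(q)p$ and so $\gamma_{p,q}''(u)$ acquires an extra term from differentiating $G^{-1}$ along the trajectory. A clean bound of $\|\gamma_{p,q}''\|$ by $C$ alone then fails, and one must absorb the geometric factors $\|G^{-1}\|$ and $\|\nabla G^{-1}\|$ into the constant by invoking Assumption \ref{AssumptionBoundFIM} on the compact closure of the relevant portion of $\mathcal{X}$. Once those geometric norms are folded in, the two-step integration argument above goes through verbatim, and one recovers the same form of bound with a modified constant. Since the lemma is used only locally in the proof of Theorem \ref{thm:1} (through Equation \eqref{EqDerivCalc} on vanishingly short paths), any constant depending on $H$ and a compact neighbourhood suffices downstream.
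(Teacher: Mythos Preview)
Your proposal is correct and follows essentially the same route as the paper: bound $\|\gamma_{p,q}''\|$ by $C$ via Hamilton's equations and the hypothesis on $\partial_q H$, then integrate once for \eqref{IneqLinApproxMomGen} and twice for \eqref{IneqLinApproxPosGen}. The paper does the second integration as a plain iterated application of the fundamental theorem of calculus rather than invoking the Taylor integral remainder, and it simply reparameterizes to $s=0$ at the outset; your added remark that the Riemannian case requires folding the metric bounds from Assumption~\ref{AssumptionBoundFIM} into the constant is a point the paper's proof does not make explicit.
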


\begin{proof}
Note that, by reparameterization, we can assume WLOG that $s=0$. By Hamilton's equations, for all $0 \leq t \leq \tau_{\mathrm{exit}}$,
\be
\| \gamma_{p,q}'(t) - \gamma_{p,q}'(0) \| &= \| \int_{0}^{t} \gamma_{p,q}'(s) ds \| \\
&\leq \int_{0}^{t} \| \gamma_{p,q}'(s) \| ds \\
&\leq \int_{0}^{t} \, C ds = Ct.
\ee

This proves Inequality \eqref{IneqLinApproxMomGen}. Applying this bound, we have for $0 \leq t \leq \tau_{\mathrm{exit}}$,

\be
\| \gamma_{p,q}(t) - \hat{\gamma}_{p,q}(t) \| &= \| \int_{0}^{t} (\gamma_{p,q}'(s) - \hat{\gamma}_{p,q}'(s)) ds \| \\
&\leq  \int_{0}^{t} \| \gamma_{p,q}'(s) - \hat{\gamma}_{p,q}'(s) \| ds \\
&\leq \int_{0}^{t} C s ds = \frac{C}{2} t^{2}.
\ee
This proves Inequality \eqref{IneqLinApproxPosGen} and completes the proof of the lemma.
\end{proof}

We now prove Lemma \ref{LemmaTechIntLemma}:

\begin{proof} [Proof of Lemma \ref{LemmaTechIntLemma}]

Our proof strategy is to break $\mathbb{R}^{2d}$ into pieces, and then estimate the contributions of each piece to the derivative \eqref{EqDerivCalc}. We repeat the observation that the result of this calculation is exactly what one might expect from \textit{e.g.} taking naive Taylor expansions at this point and ignoring the possibility that any curve will intersect $S$ more than once.

Define the nearly-disjoint cover $\mathfrak{R} = \{ \mathbb{R}^{d} \times [a_{1},a_{1}+1] \times[a_{2},a_{2}+1] \times \ldots \times [a_{d},a_{d}+1]  \, : \, a_{1},\ldots,a_{d} \in \mathbb{Z}\}$ of $\mathbb{R}^{2d}$, whose elements look like rectangular ``slabs" that have side width 1 in their first $d$ dimensions and ``infinite" side width in their last $d$ dimensions. By the monotone convergence theorem, we can rewrite Equation \eqref{EqDerivCalc} as  

\be  \label{EqBreakUpPhiDeriv}
\frac{d \Phi^{+}(t)}{dt} = \frac{1}{2} \lim_{h \rightarrow 0} h^{-1}  \E [N_{\partial S}(P,Q,0,h)] = \frac{1}{2}  \sum_{A \in \mathfrak{R}} \lim_{h \rightarrow 0} h^{-1} \E [N_{\partial S}(P,Q,0,h) \, \mathbbm{1}_{(P,Q) \in A }] \\ 
\ee

We now estimate the terms on the right-hand side of this formula. Fix $A \in \mathfrak{R}$. For $(p,q) \in \mathbb{R}^{2d}$ and $t \in \mathbb{R}$, recall that $\hat{\gamma}_{p,q}(t) = \gamma_{p,q}(0) + t \, \gamma_{p,q}'(0) \in \mathbb{R}^{d}$ is the usual linear approximation to the path $\gamma_{p,q}(t)$. We then define 
\be 
\hat{N}_{\partial S}(p,q,a,b) := | \{t \in [a,b] \, : \, \hat{\gamma}_{p,q}(t) \in \partial S\} | 
\ee
when the set on the right-hand side is finite. By exactly the same argument as in the proof of Lemma \ref{LemCountingPossible}, the path $\hat{\gamma}_{p,q}(t)$ is transverse to $\partial S$ for almost every value of $(p,q) \in \mathbb{R}^{2d}$. As we are only interested in calculating integrals, we can ignore sets of measure 0 and so restrict our attention to values of $(p,q)$ for which $\gamma_{p,q}$ and $\hat{\gamma}_{p,q}$ are transverse to $\partial S$ over their entire paths.

We will now show that $\hat{N}_{\partial S}(p,q,0,h)$ agrees with $N_{\partial S}(p,q,0,h)$ outside of a set of measure $o(h)$, which will turn out to be negligible. Assume for the remainder of the proof that $h < 0.1$ and define $\epsilon(h) = h^{0.95}$. Although our argument involves checking several cases, all of our estimates are based on the following two heuristics: 
\begin{enumerate}
\item For $h$ small we have $\P[ \|P \| > \epsilon(h)] = o(h^{2})$. Thus, we can ignore everything outside of a ball of radius $\epsilon(h)$ around a point $q$ of interest. 
\item Within a small ball of radius $\epsilon(h)$ around $q$, the path $\gamma_{p,q}$ is close to its linear approximation $\hat{\gamma}_{p,q}$, and similarly the surface $\partial S$ is close to its linear approximation $\mathcal{T}_{x}$. In particular, Lemma \ref{LemmaLinApprox} and Assumption \ref{DefLocallyWellBehaved} tell us that we can approximate $\gamma_{p,q}$ and $\partial S$ by $\hat{\gamma}_{p,q}$ and $\mathcal{T}_{x}$ respectively, up to $O(\epsilon(h)^{2})$ errors. 
\end{enumerate}

Most of the proof is checking that the $O(\epsilon(h)^{2})$ deviations in \textit{paths} that occur in the second heuristic can be translated into $O(\epsilon(h)^{2})$ bounds on possible \textit{initial conditions} $p,q$.

We now begin bounding the measure of the set
\be 
\{ (p,q) \in A \, : \, N_{\partial S}(p,q,0,h) \neq \hat{N}_{\partial S}(p,q,0,h) \}
\ee 
for $h$ small by noting that $N_{\partial S}(p,q,0,h) = \hat{N}_{\partial S}(p,q,0,h)$ unless at least one of the following ``bad events" occurs: 

\begin{defn}\label{DefBadEvents}
\begin{enumerate}
\item \textbf{The Hamiltonian path is unusually fast:} that is,
\be 
\mathcal{E}_{1} = \{ (p,q) \in A \, : \, \sup_{ 0 \leq t \leq h} \|  \gamma_{p,q}'(t) \| \geq h^{-1} \epsilon(h) \}.
\ee
\item \textbf{The approximate path just barely misses the boundary:} that is,
\be 
\mathcal{E}_{2}  = \{ (p,q) \in A \, : \, N_{\partial S} (p,q,0,h) > 0, \, \hat{N}_{\partial S}(p,q,0,h) = 0  \}.
\ee 

\item \textbf{The Hamiltonian path just barely misses the boundary:} that is,
\be
\mathcal{E}_{3}  = \{ (p,q) \in A \, : \, N_{\partial S} (p,q,0,h) = 0, \, \hat{N}_{\partial S}(p,q,0,h) > 0  \}.
\ee

\item \textbf{One or both paths intersect the boundary more than once:}  that is,
\be 
\mathcal{E}_{4}  = \{ (p,q) \in A \, : \, \max(N_{\partial S}(p,q,0,h), \hat{N}_{\partial S }(p,q,0,h)) >  1  \}.
\ee 
\end{enumerate}

\end{defn}

More carefully, for $(p,q) \in A \backslash (\mathcal{E}_{1} \cup \mathcal{E}_{2} \cup \mathcal{E}_{3} \cup \mathcal{E}_{4})$, we have $N_{\partial S}(p,q,0,h) = \hat{N}_{\partial S}(p,q,0,h) \in \{0,1\}$. See Figure \ref{FigNearlyParallel} for a quick illustration of why we don't expect short paths to intersect $\partial S$ more than once.

\begin{figure}[H]
\includegraphics[scale=0.4]{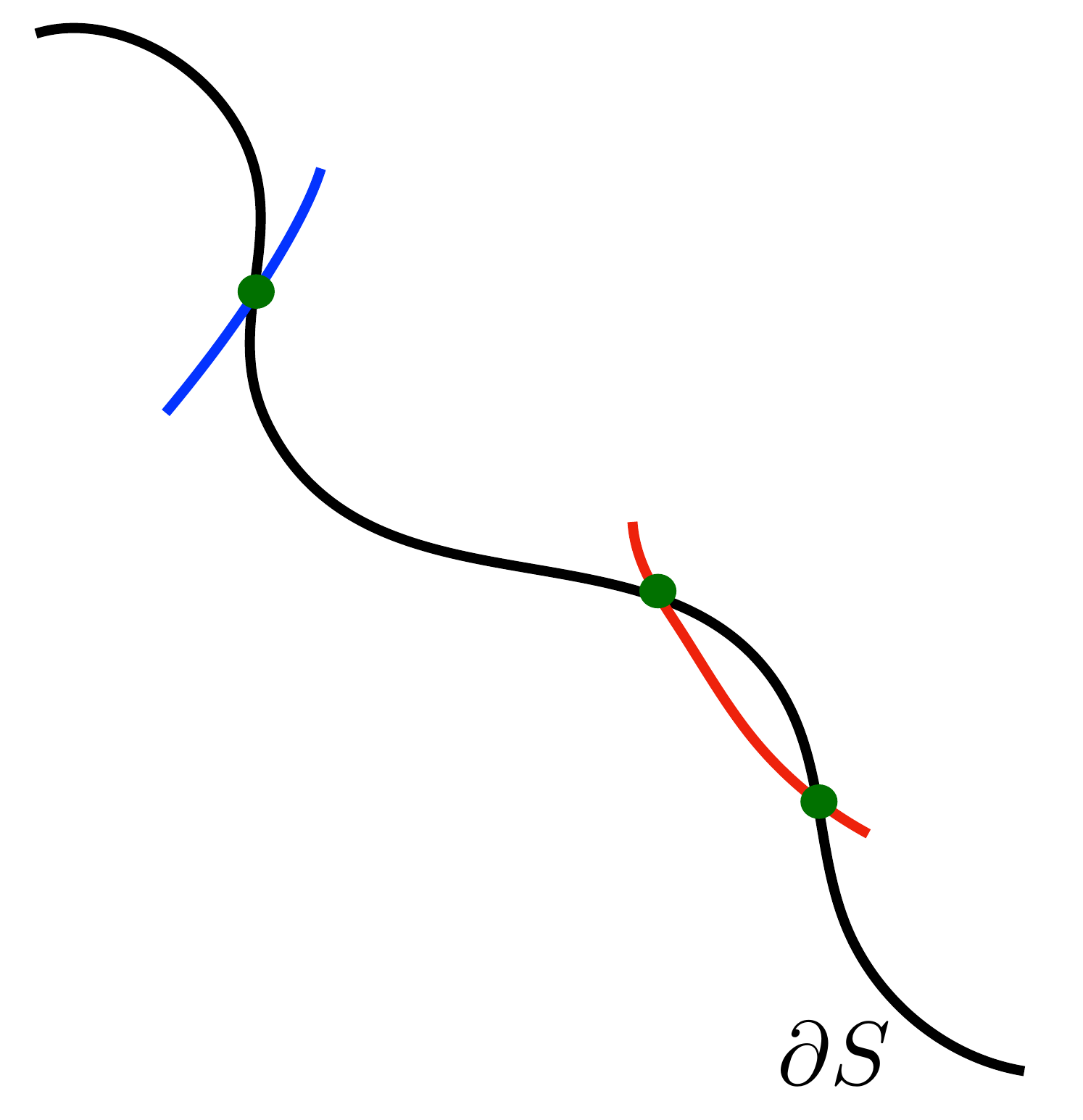}
\caption{A boundary $\partial S$ is shown in black. The blue trajectory intersects only once; the red trajectory hits a highly-curved region more than once. By Assumption \ref{DefLocallyWellBehaved}, these ``highly-curved" regions are not possible for very short paths. \label{FigMultiple}}
\end{figure}

We now show that, for $i \in \{1,2,3,4\}$,
\be \label{IneqManySetsDontContribute}
\P[\mathcal{E}_{i}] = o(h).
\ee
This will allow us to ignore these events when calculating the expectation in Equation \eqref{EqDerivCalc}.

Most of the remainder of this proof is dedicated to proving Inequality \eqref{IneqManySetsDontContribute}. We state a few conventions that we use throughout this proof. We will often say that an inequality ``holds for $h$ sufficiently small." This is always shorthand for the longer phrase: ``there exists some $H_{0} = H_{0}(A)$ so that the following inequality holds for all $h < H_{0}$." In particular, these bounds will always hold uniformly in $A$, but will generally not hold uniformly in $\mathbb{R}^{2d}$. We will use $c,C,C',$ etc as generic constants. Generally, we use $c,c'$, etc for a constant that appears only in the next line, while $C, C'$,etc are used for constants that persist over a longer section of the proof.

We begin by showing that $\mathcal{E}_{1}$ has negligible probability:

\begin{prop}
With notation as above, 
\be  \label{IneqE1Small}
\P[(P,Q) \in \mathcal{E}_{1} ] = o(h^{2}).
\ee 
\end{prop}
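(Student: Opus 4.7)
The strategy is to split on whether the initial momentum $\|P\|$ is large or small relative to a threshold $M = M(h)$ that grows like a small power of $h^{-1}$. On the tail event $\{\|P\| > M\}$ the (sub-)Gaussian conditional law of $P$ given $Q\in A_q$ supplies super-polynomially small probability; on the complement, a continuity bootstrap using Lemma~\ref{LemmaLinApprox} will show that the trajectory cannot pick up enough speed on $[0,h]$ to lie in $\mathcal{E}_1$.

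Let $A_q$ be the compact projection of the slab $A$ onto position space and set $M := c_0 h^{-0.05}$ for a constant $c_0>0$ to be chosen later. Decompose
$$\P[(P,Q)\in\mathcal{E}_1] \;\leq\; \P[Q\in A_q,\,\|P\|>M]\;+\;\P[(P,Q)\in\mathcal{E}_1,\,\|P\|\leq M].$$
For the first term, the conditional law of $P$ given $Q\in A_q$ is a centered Gaussian: in the isotropic case it is $\mathcal{N}(0,\mathrm{Id})$, and in the Riemannian case Assumption~\ref{AssumptionBoundFIM} bounds its covariance uniformly in operator norm on the compact $A_q$. A standard sub-Gaussian tail estimate then gives a bound of the form $C_1 \exp(-c_1 c_0^2 h^{-0.1})$, which is $o(h^k)$ for every $k\geq 0$ and hence $o(h^2)$.

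For the second term I claim the event is empty once $h$ is small enough. Let $\mathcal{X}$ be the unit thickening of $A_q$, and set $C := \sup\{\|\partial_q H(p,q)\|:q\in\mathcal{X},\,\|p\|\leq 2M\}$; smoothness of $\pi$ and, in the Riemannian case, of $G$, together with Assumption~\ref{AssumptionBoundFIM}, give $C=O(1)$ in the isotropic case and $C=O(M^2)$ in the Riemannian case (because the kinetic form $\frac12 p^\top G^{-1}(q)p$ differentiated in $q$ is quadratic in $p$). The initial velocity $\gamma'_{p,q}(0)$ equals $p$ or $G^{-1}(q)p$ respectively, so Assumption~\ref{AssumptionBoundFIM} yields $\|\gamma'_{p,q}(0)\|\leq c_2 M$ for some constant $c_2$ depending only on $A_q$. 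Setting $\tau_* := \inf\{t\geq 0:\gamma_{p,q}(t)\notin\mathcal{X}\}$, Lemma~\ref{LemmaLinApprox} gives, on $[0,\min(h,\tau_*)]$,
$$\|\gamma'_{p,q}(t)\| \;\leq\; c_2 M + C t \;\leq\; c_2 c_0 h^{-0.05} + O(h^{0.9}),$$
so the displacement is $O(h^{0.95})<1$ and a standard bootstrap forces $\tau_*\geq h$. Choosing $c_0<1/c_2$ at the outset then makes the right-hand side strictly less than $h^{-0.05}=h^{-1}\epsilon(h)$ for all sufficiently small $h$, contradicting $(p,q)\in\mathcal{E}_1$. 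Combining the two bounds yields $\P[(P,Q)\in\mathcal{E}_1]=o(h^2)$.

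The main technical obstacle is the Riemannian case of the bootstrap: there $\|\partial_q H\|$ depends quadratically on $\|p\|$, so one must first use conservation of energy (together with Assumption~\ref{AssumptionBoundFIM}) to keep $\|p(t)\|$ of order $\|p(0)\|=O(M)$ throughout $\mathcal{X}$, and then verify that the resulting $O(M^2)$ bound on $\|\partial_q H\|$ remains negligible once multiplied by the short integration time $h$, i.e.\ that $M^2 h = o(M)$, which holds by the choice $M=c_0 h^{-0.05}$.
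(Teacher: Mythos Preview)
Your proof is correct and follows essentially the same approach as the paper: control the change in velocity over $[0,h]$ via Lemma~\ref{LemmaLinApprox}, reducing $\mathcal{E}_1$ to a tail event for the initial momentum, and then invoke the (sub-)Gaussian tail. The paper does this in a single stroke---it argues $\sup_{0\le t\le \tau_{\mathrm{far}}}\|\gamma'_{p,q}(t)\|\le\|\gamma'_{p,q}(0)\|+c_1 h$ with a uniform $c_1$ and then bounds $\P[\|\gamma'_{P,Q}(0)\|\ge h^{-1}\epsilon(h)-c_1 h]$---whereas you split explicitly on $\|P\|\le M$ versus $\|P\|>M$. Your splitting is arguably more transparent in the Riemannian case: since $\partial_q H$ there is quadratic in $p$, a constant $c_1$ uniform over all of $A$ (which is unbounded in $p$) is not directly available, and your threshold $M=c_0 h^{-0.05}$ together with conservation of energy supplies the missing uniformity that the paper's one-line application of Lemma~\ref{LemmaLinApprox} leaves implicit. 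One minor imprecision: the conditional law of $P$ given $Q\in A_q$ is a \emph{mixture} of centered Gaussians rather than a single Gaussian, but since Assumption~\ref{AssumptionBoundFIM} bounds the operator norm of $G(q)$ uniformly on $A_q$, the mixture is still sub-Gaussian and your tail bound goes through unchanged.
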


\begin{proof}
Define $\tau_{\mathrm{far}}(p,q) = \min(h, \inf \{t \geq 0 \, : \, \gamma_{p,q}(t) \notin A_{10}\})$. Since the projection of $A_{10}$ onto the position space is contained in a compact subset of $\mathbb{R}^d$ and $\nabla U$ is continuous, $\|\nabla U(x)\|$ must be bounded on $A_{10}$. Therefore, by Lemma \ref{LemmaLinApprox} and the assumption that $S$ satisfies Assumption \ref{DefLocallyWellBehaved}, 
\be 
\sup_{0 \leq t \leq \tau_{\mathrm{far}} (p,q)} \| \gamma_{p,q}'(t) \| \leq \| \gamma_{p,q}'(0) \| + c_{1} h
\ee
for some constant $0 < c_{1} < \infty$ that does not depend on the starting point $(p,q) \in A$. Since $\inf_{x \in A, \, y \notin A_{10}} \|x - y \| = 10$, we have for all $0< h \ll c_{1}^{-2}$ sufficiently small that 
\be
\P[ \{ (P,Q) \in A\} \cap \{ \sup_{ 0 \leq t \leq h} \|  \gamma_{P,Q}'(t) \| \geq h^{-1} \epsilon(h)  \}] &\leq \P[ \{(P,Q) \in A\} \cap \{ \| \gamma_{P,Q}'(0) \| \geq h^{-1} \epsilon(h) - c_{1} h\}] \\
&\leq \P[  \| \gamma_{P,Q}'(0) \| \geq h^{-1} \epsilon(h) - c_{1} h\}] \\
&\leq e^{- \Omega(h^{-2} \epsilon(h))} =o(h^{2}),
\ee
where in the case of RHMC the second-last line uses Assumption \ref{AssumptionBoundFIM}.

Thus, we conclude
\be  
\P[(P,Q) \in \mathcal{E}_{1} ] = o(h^{2}),
\ee 
completing the proof.
\end{proof}

Next, we estimate the probability of $\mathcal{E}_{2} \backslash \mathcal{E}_{1}$. 

\begin{prop}
With notation as above,
\be  \label{IneqE2Small}
\P[ (P,Q) \in \mathcal{E}_{2} \backslash \mathcal{E}_{1}] = o(h).
\ee 
\end{prop}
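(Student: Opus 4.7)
The plan is to show $\P[\mathcal{E}_{2}\setminus\mathcal{E}_{1}] = O(\epsilon(h)^{2}) = O(h^{1.9})$, which is $o(h)$. The main insight is that, if $(p,q)\notin\mathcal{E}_{1}$, then the Hamiltonian trajectory stays within distance $\epsilon(h)$ of $q$ on $[0,h]$, so if the true path crosses $\partial S$ then $q$ lies in the $\epsilon(h)$-tubular neighborhood of $\partial S$; on this scale $\partial S$ deviates from its tangent hyperplane at the nearest point by $O(\epsilon(h)^{2})$ by Assumption \ref{DefLocallyWellBehaved}, while $\gamma_{p,q}$ deviates from $\hat{\gamma}_{p,q}$ by $O(h^{2}) = o(\epsilon(h)^{2})$ by Lemma \ref{LemmaLinApprox}.

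Accordingly, I would first restrict attention to $(p,q)\in A$ with $q$ in the $\epsilon(h)$-tube of $\partial S\cap A$, outside of which $\mathcal{E}_{2}\setminus\mathcal{E}_{1}$ is empty. On this tube I would introduce tubular coordinates $q = x + y_{\perp}\eta(x)$ with $x\in\partial S$ and $|y_{\perp}|\le\epsilon(h)$; since $A$ has compact $q$-projection, Assumption \ref{DefLocallyWellBehaved} gives a uniformly bounded Jacobian. Writing $p_{\perp} = \langle p,\eta(x)\rangle$, the signed distance from $\partial S$ along the linear trajectory equals $y_{\perp} + tp_{\perp} + e_{1}(t)$ and along the true trajectory equals $y_{\perp} + tp_{\perp} + e_{2}(t)$, where $e_{1}$ (coming from the curvature of $\partial S$ over an $\epsilon(h)$-region) and $e_{2}$ (coming from curvature plus the trajectory deviation from Lemma \ref{LemmaLinApprox}) both satisfy $\sup_{t\in[0,h]}|e_{j}(t)|\le C\epsilon(h)^{2}$ uniformly in $A$.

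The heart of the plan is then the following geometric observation: for fixed $(x,p)$, the sets $I_{\mathrm{true}}(x,p) = \{y_{\perp} : \exists\, t\in[0,h],\ y_{\perp}+tp_{\perp}+e_{2}(t)=0\}$ and $I_{\mathrm{lin}}(x,p) = \{y_{\perp} : \exists\, t\in[0,h],\ y_{\perp}+tp_{\perp}+e_{1}(t)=0\}$ are each $O(\epsilon(h)^{2})$-perturbations of the common interval with endpoints $0$ and $-hp_{\perp}$ (and, when $|p_{\perp}|$ is very small, are both contained in an $O(\epsilon(h)^{2})$-ball around $0$). In either case, their symmetric difference has Lebesgue measure $O(\epsilon(h)^{2})$ uniformly in $(x,p)$, and $\{y_{\perp}: (p,q)\in\mathcal{E}_{2}\}\subseteq I_{\mathrm{true}}\setminus I_{\mathrm{lin}}$. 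Integrating this sliver estimate against the bounded Jacobian, the bounded density of $\pi$ on $A$, the Gaussian density of $p$ (for RHMC the density $\mathcal{N}(0,G^{-1}(q))$, uniformly bounded via Assumption \ref{AssumptionBoundFIM}), and the finite $(d-1)$-dimensional area of $\partial S\cap A$ yields $\P[\mathcal{E}_{2}\setminus\mathcal{E}_{1}] = O(\epsilon(h)^{2}) = o(h)$.

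The main obstacle I anticipate is establishing the sliver estimate fully rigorously in the degenerate regime where $|p_{\perp}|$ is very small or the hitting time approaches the endpoints of $[0,h]$; there $I_{\mathrm{true}}$ and $I_{\mathrm{lin}}$ may fail to be genuine intervals, and one must use the full quadratic surface estimate of Assumption \ref{DefLocallyWellBehaved} rather than a mere Lipschitz bound to control the endpoints. The required bookkeeping is tedious but does not alter the $O(\epsilon(h)^{2})$ conclusion.
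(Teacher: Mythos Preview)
Your approach is correct and yields the slightly sharper bound $O(\epsilon(h)^{2})=O(h^{1.9})$, but it is organized quite differently from the paper's proof.

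The paper does not introduce tubular coordinates or the hitting-interval map $y_{\perp}\mapsto I_{\mathrm{true}},I_{\mathrm{lin}}$. Instead it argues topologically: on $\mathcal{E}_{2}\setminus\mathcal{E}_{1}$ the true path meets $\partial S$ inside a ball of radius $\epsilon(h)$, and by Assumption~\ref{DefLocallyWellBehaved} the surface $\partial S$ is trapped between two parallel hyperplanes $H_{1},H_{2}$ at distance $O(\epsilon(h)^{2})$. Since the linear path $\hat{\gamma}_{p,q}$ does \emph{not} meet $\partial S$, it cannot cross from one hyperplane to the other (else it would hit $\partial S$, which separates them locally); hence at least one endpoint $q$ or $\hat{\gamma}_{p,q}(h)$ must lie in the slab $W$ of width $O(\epsilon(h)^{2})$. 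The two endpoint events are then bounded separately: $\P[Q\in W]=O(\epsilon(h)^{2})$ directly, while $\P[\hat{\gamma}_{P,Q}(h)\in W,\,Q\in\mathcal{C}(h)]$ is handled by first paying $\P[Q\in\mathcal{C}(h)]=O(\epsilon(h))$ and then using that $\hat{\gamma}_{P,q}(h)$ is a Gaussian of scale $h$, giving $O(\epsilon(h))\cdot O(h^{-1}\epsilon(h)^{2})=O(h^{-1}\epsilon(h)^{3})$. Your route avoids this two-endpoint split and the extraneous $h^{-1}$ factor by integrating the $y_{\perp}$-sliver directly; the price is that you must set up tubular coordinates and control the Jacobian, which the paper sidesteps. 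Regarding the obstacle you flag in the degenerate regime $|p_{\perp}|\ll\epsilon(h)^{2}$: it is harmless, since both $I_{\mathrm{true}}$ and $I_{\mathrm{lin}}$ are then contained in $[-C\epsilon(h)^{2},C\epsilon(h)^{2}]$, so their symmetric difference still has measure $O(\epsilon(h)^{2})$ without any need to identify them as intervals.
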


\begin{proof}

Roughly speaking, this event has small probability because it can only occur if $\hat{\gamma}_{p,q}(h)$ is very close to $\partial S$. To make this intuition rigorous, define 
\be 
\mathcal{C}(h) = \{ q \in \mathbb{R}^{d} \, : \, (0,q) \in A, \, q \in (\partial S)_{\epsilon(h)} \}
\ee
to be the second coordinates of the elements of $A$ that are near $\partial S$. For $q \in \mathcal{C}(h)$, let 
\be 
s(q) = \mathrm{argmin}_{s \in \partial S} ( \|q - s \|)
\ee 
be the closest element of $\partial S$ to $q$. Note that, by Assumption \ref{DefLocallyWellBehaved} and the fact that $\partial S \cap A$ is compact, $\sup_{q \in \mathcal{C}(h)} |s(q)| = 1$ for all $h$ sufficiently small; thus by a slight abuse of notation we treat $s(q)$ as if it were a point rather than a set. Let $v(q) = \frac{s(q) - q}{\|s(q) - q\|}$ be the unit vector from $q$ to $s(q)$.

For a set $B \subset \mathbb{R}^{d}$ and a point $x \in \mathbb{R}^{d}$, define the usual notion of set translation
\be 
B + x = \{ b + x \, : \, b \in B\}.
\ee 

By Lemma \ref{LemmaLinApprox} and Assumption \ref{DefLocallyWellBehaved}, we have the containment condition
\be  \label{SomeContaintmentBoundingE2}
(\partial S) \cap (\mathcal{C}(h)_{10 \epsilon(h)}) \subset \cup_{- C \epsilon(h)^{2} \leq \delta \leq C \epsilon(h)^{2}} (\mathcal{T}_{s(q)} + \delta \, v(q)) 
\ee
for some $0 < C < \infty$. That is, the part of $\partial S$ contained in $\mathcal{C}(h)_{10 \epsilon(h)}$ is sandwiched between the two affine planes 

\be 
H_{1}(C) = \mathcal{T}_{s(q)} - C \epsilon h^{2} \, v(q), \, H_{2}(C) = \mathcal{T}_{s(q)} + C \epsilon h^{2} \, v(q). 
\ee

Next, fix $(p,q) \in  \mathcal{E}_{2} \backslash \mathcal{E}_{1}$ satisfying 
\be \label{EqFirstContainmentJan}
\gamma_{p,q}([0,h]) \cap \partial S \cap \mathcal{C}(h)_{10 \epsilon(h)} \neq \emptyset.
\ee 
Since the path $\gamma_{p,q}([0,h])$ passes through $\partial S \cap \mathcal{C}(h)_{10 \epsilon(h)}$, and that set is sandwiched between two hyperplanes $H_{1}(C)$, $H_{2}(C)$ by the containment bound \eqref{SomeContaintmentBoundingE2}, the path must either stay entirely between the two hyperplanes or must intersect one of them - in particular, for all $C' \geq C$, it must satisfy at least one of:

\begin{enumerate}
\item $\gamma_{p,q}([0,h]) \cap H_{1}(C') \neq \emptyset$, or 
\item $\gamma_{p,q}([0,h]) \cap H_{2}(C') \neq \emptyset$, or
\item $\gamma_{p,q}([0,h]) \subset \cup_{- C' \epsilon(h)^{2} \leq \delta \leq C' \epsilon(h)^{2}} (\mathcal{T}_{s(q)} + \delta \, v(q))$.
\end{enumerate}

Thus, applying Lemma \ref{LemmaLinApprox} again, for all $C'' \gg C' $ sufficiently large compared to $C' \geq C$ we must have that at least one of the following hold:

\begin{enumerate}
\item $\hat{\gamma}_{p,q}([0,h]) \cap H_{1}(C'') \neq \emptyset$, or 
\item $\hat{\gamma}_{p,q}([0,h]) \cap H_{2}(C'') \neq \emptyset$, or
\item $\hat{\gamma}_{p,q}([0,h]) \subset \cup_{- C'' \epsilon(h)^{2} \leq \delta \leq C'' \epsilon(h)^{2}} (\mathcal{T}_{s(q)} + \delta \, v(q))$.
\end{enumerate}
For $r > 0$, denote by $B_{r} = B_{r}(q)$ the ball of radius $r$ around $q$. Note that, by Assumption \ref{DefLocallyWellBehaved}, for all $0 < r < R_{0} = R_{0}(A)$ sufficiently small, the surface $(\partial S) \cap B_{r}$ must separate $H_{1}(C'') \cap  B_{r}$ from $H_{2}(C'') \cap  B_{r}$, as shown in Figure \ref{FigSepPic}:

\begin{figure}[H]
\includegraphics[scale=0.5]{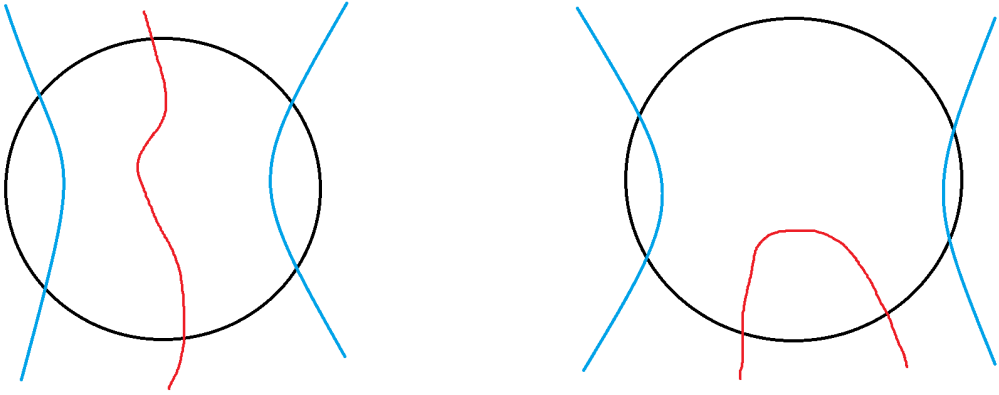}
\caption{We see $\partial S$ in red, $B_{r}$ in black, and $H_{1}, H_{2}$ in blue. The right-hand side picture is only possible for $r$ large, as it would require the normal vector of $\partial S$ to have a derivative of size that goes to infinity as $r$ goes to 0. } \label{FigSepPic}
\end{figure}

 Thus, the containment condition \eqref{SomeContaintmentBoundingE2} implies that $\hat{\gamma}_{p,q}$ cannot pass through both hyperplanes $H_{1}(C'')$, $H_{2}(C'')$ (or else it would intersect $\partial S$). Thus, for the same value of $C''$, we must have  

\be 
\min( |\hat{\gamma}_{p,q}([0,h]) \cap H_{1}(C'')|, \,  |\hat{\gamma}_{p,q}([0,h]) \cap H_{2}(C'')|) = 0.
\ee 

Summarizing, we have found that $\hat{\gamma}_{p,q}([0,h])$ must either:

\begin{enumerate}
\item remain sandwiched between $H_{1}(C'')$ and $H_{2}(C'')$, \textit{or}
\item intersect with \textit{exactly one} of $H_{1}(C'')$, $H_{2}(C'')$.
\end{enumerate}

This implies that at least one endpoint of $\hat{\gamma}_{p,q}([0,h])$ must also be sandwiched between $H_{1}(C'')$ and $H_{2}(C'')$. More precisely, if we define $W =  \cup_{- C'' \epsilon(h)^{2} \leq \delta \leq C'' \epsilon(h)^{2}} (\mathcal{T}_{s(q)} + \delta \, v(q))$, we have shown that at least one of the following must hold:

\begin{enumerate}
\item $q \in  W$, \textit{or}
\item $\hat{\gamma}_{p,q}(h) \in  W$.
\end{enumerate}

Thus, for all $h$ sufficiently small, 

\be \label{IneqBoundE2QCloseToBoundary}
\P[ \{Q \in \mathcal{C}(h)\} \cap \{ (P,Q) \in \mathcal{E}_{2} \backslash \mathcal{E}_{1} \}]  &\leq \P[Q \in \mathcal{C}(h) \cap W] \\
& \qquad + \P[ \{ \hat{\gamma}_{P,Q}(h) \in W \} \cap \{ Q \in \mathcal{C}(h)\} \cap \{(P,Q) \in \mathcal{E}_{2} \backslash \mathcal{E}_{1}\}] \\
&\leq \P[Q \in \mathcal{C}(h) \cap W]  + \P[ Q \in \mathcal{C}(h)] \times  \P[\hat{\gamma}_{(P,Q)}(h) \in W | Q \in \mathcal{C}(h)] \\
&\leq O(\epsilon(h)^{2}) + \P[ Q \in \mathcal{C}(h)] \sup_{q \in \mathcal{C}(h)} \P[\hat{\gamma}_{(P,q)}(h) \in W] \\
&= O( \epsilon(h)^{2}) + O( \epsilon(h))\,  \sup_{q \in \mathcal{C}(h)} \P[\hat{\gamma}(P,q) \in W] , \\
\ee
where the third inequality holds by independence of $P$ and $Q$.

To bound the last term, note that $\hat{\gamma}(P,q)$ is a $d$-dimensional Gaussian with covariance matrix of norm $O(h^{2})$ (by Assumption \ref{AssumptionBoundFIM} in the case of RHMC), while $W$ is the product of a hyperplane with an interval of length $C''\, \epsilon(h)^{2}$. Thus, if we let $Z$ be the standard $d$-dimensional Gaussian, we have by rescaling and rotation: 
\begin{align*}
\P[\hat{\gamma}(P,q) \in W] &\leq \P[Z \in [-C''' h^{-1} \epsilon(h)^{2},C''' h^{-1} \epsilon(h)^{2}] \times \mathbb{R}^{d-1}]  \\
&= \P[|Z[1]| \leq C''' h^{-1} \epsilon(h)^{2}] = O(h^{-1} \epsilon(h)^{2}),
\end{align*}
where the constant $C''' = C''$ in the case of isotropic HMC but may be larger in the case of RHMC. Combining this bound with Inequality \eqref{IneqBoundE2QCloseToBoundary}, we conclude 
\be  \label{IneqBoundE2NearFinal1}
\P[ \{Q \in \mathcal{C}(h)\} \cap \{ (P,Q) \in \mathcal{E}_{2} \backslash \mathcal{E}_{1} \}] = O(h^{-1} \epsilon(h)^{3}) = o(h).
\ee

Finally, we consider the case that $(p,q) \in \mathcal{E}_{2} \backslash \mathcal{E}_{1}$ but $q \notin \mathcal{C}(h)$. In this case, Lemma \ref{LemmaLinApprox} implies  
\be 
\| \gamma_{p,q}'(0) \| \geq c h^{-1} \epsilon(h)
\ee 
for some $c > 0$ that does not depend on $h$ (since otherwise $\gamma_{p,q}([0,h])$ would not intersect $\partial S$). Thus,
\be  \label{IneqBoundE2NearFinal2}
\P[ \{Q \notin \mathcal{C}(h)\} \cap \{ (P,Q) \in \mathcal{E}_{2} \backslash \mathcal{E}_{1} \}] \leq \P[\| \gamma_{P,Q}'(0) \| \geq c h^{-1} \epsilon(h)] = o(h),
\ee 
where again the last line uses Assumption \ref{AssumptionBoundFIM} in the case of RHMC. 

Combining Inequalities \eqref{IneqBoundE2NearFinal1} and \eqref{IneqBoundE2NearFinal2} completes the proof.

\end{proof}

The proof that 
\be  \label{IneqE3Small}
\P[(P,Q) \in\mathcal{E}_{3} \backslash \mathcal{E}_{1}] = o(h)
\ee
is essentially the same; we omit the details.

Finally, we bound $\P[\mathcal{E}_{4}]$:
\begin{prop}
With notation as above,
\be  \label{IneqE4Small}
\P[(P,Q) \in \mathcal{E}_{4}] = o(h).
\ee
\end{prop}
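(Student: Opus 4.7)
The plan is to restrict to $\mathcal{E}_4 \setminus \mathcal{E}_1$, since Inequality \eqref{IneqE1Small} already gives $\mathbb{P}[\mathcal{E}_1] = o(h^2)$. Outside $\mathcal{E}_1$ the velocity satisfies $\|\gamma'_{p,q}(t)\| \leq h^{-1}\epsilon(h)$ throughout $[0,h]$, so each path has total length at most $\epsilon(h) = h^{0.95}$; by Lemma \ref{LemmaLinApprox}, $\gamma_{p,q}$ and $\hat{\gamma}_{p,q}$ differ by $O(h^2)$, which is negligible compared with $\epsilon(h)^{2}$. It therefore suffices to argue about the essentially linear picture.

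The core geometric observation is that for such a short, nearly-straight curve to meet $\partial S$ at two distinct points $x_1, x_2$ at times $0 \leq t_1 < t_2 \leq h$, the initial velocity $v = \gamma'_{p,q}(0)$ must be nearly tangent to $\partial S$ at $x_1$. Indeed, the quadratic clause of Assumption \ref{DefLocallyWellBehaved} gives
\[
\|x_2 - \mathbbm{Proj}_{x_1}(x_2)\| \leq C \|x_2 - x_1\|^{2},
\]
and combining this with $x_2 - x_1 = (t_2 - t_1) v + O(h^2)$ forces $|\langle v, \eta(x_1)\rangle| = O(\epsilon(h)^{2}/h) = O(h^{0.9})$. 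In words, a short nearly-straight path can meet a locally graph-like surface twice only if its velocity is almost parallel to the surface.

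The probability bound then follows from a tube-and-slab estimate. The event $\mathcal{E}_4 \setminus \mathcal{E}_1$ is contained in the intersection of (a) $\{Q \in (\partial S)_{\epsilon(h)} \cap A_{\mathrm{proj}}\}$, whose probability is $O(\epsilon(h))$ because $\pi$ is bounded on the compact set $A_{\mathrm{proj}}$ and the $\epsilon(h)$-tube around $\partial S$ has volume $O(\epsilon(h))$ by Assumption \ref{DefLocallyWellBehaved}, and (b) the slab $\{|\langle \gamma'_{P,Q}(0), \eta(s(Q))\rangle| \leq C h^{0.9}\}$, where $s(q) \in \partial S$ is the nearest-point projection of $q$ (Assumption \ref{DefLocallyWellBehaved} allows replacing $\eta(x_1)$ by $\eta(s(q))$ with an $O(\epsilon(h)) \|v\| = O(h^{0.9})$ error absorbed into the constant). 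Conditioned on $Q = q \in A_{\mathrm{proj}}$, (b) is a slab condition of width $O(h^{0.9})$ on a Gaussian momentum, so has conditional probability $O(h^{0.9})$ uniformly in $q$; in the RHMC case, this uniformity uses Assumption \ref{AssumptionBoundFIM} to bound the density of the relevant coordinate. Multiplying gives $\mathbb{P}[\mathcal{E}_4 \setminus \mathcal{E}_1] = O(h^{0.95} \cdot h^{0.9}) = O(h^{1.85}) = o(h)$.

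The main obstacle I anticipate is the degenerate regime $t_2 - t_1 \ll h$, where the tangency bound $|\langle v, \eta(x_1)\rangle| = O((t_2-t_1)\|v\|^2)$ deteriorates. This can be handled by applying the intermediate value theorem (or Rolle's theorem) to the signed distance from $\gamma_{p,q}(t)$ to $\partial S$, producing a time $t^* \in (t_1, t_2)$ at which the normal velocity vanishes exactly; Lemma \ref{LemmaLinApprox} then gives $|\langle v, \eta(x_1)\rangle| = O(h)$, which is still small enough for the codimension-one slab argument on $P$ to go through. A small amount of case analysis (two intersections of $\hat{\gamma}_{p,q}$, of $\gamma_{p,q}$, or one of each, glued via Lemma \ref{LemmaLinApprox}) is routine but needs to be written out to absorb the linearization errors into the final constants.
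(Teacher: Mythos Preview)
Your proposal is correct and follows essentially the same strategy as the paper: restrict to slow paths via $\mathcal{E}_1$, use the quadratic-flatness clause of Assumption \ref{DefLocallyWellBehaved} to force the initial velocity to be nearly tangent to $\partial S$, and then bound the probability as a product of a tube estimate on $Q$ and a slab estimate on the Gaussian momentum $P$.

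There are a few presentational differences worth noting. The paper treats the linear count $\hat{N}_{\partial S}$ and the true count $N_{\partial S}$ in two separate but parallel passes, whereas you merge them by invoking Lemma \ref{LemmaLinApprox} up front; your ``small amount of case analysis'' at the end is exactly what the paper spells out in its second pass. The paper also works a bit harder to tighten the tube on $Q$ from width $\epsilon(h)$ down to $h^{-1}\epsilon(h)^{2}$ (by arguing that the small normal velocity forces $q$ itself to lie very close to $\partial S$), yielding $O(h^{-2}\epsilon(h)^{4})$; your looser tube of width $\epsilon(h)$ already gives $O(h^{1.85})$, which is equally $o(h)$, so this extra step is not needed. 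Finally, your Rolle/intermediate-value-theorem idea for the degenerate regime $t_2 - t_1 \ll h$ is a clean and explicit way to handle a case the paper absorbs somewhat implicitly into its envelope argument; it is a legitimate and arguably tidier alternative.
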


\begin{proof}
We will check that both $\hat{N}_{\partial S}(P,Q,0,h) \leq 1$ and $N_{\partial S}(P,Q,0,h) \leq 1$ with probability at least $1 - o(h)$.

We begin by bounding $\hat{N}_{\partial S}(P,Q,0,h)$, since it will be simpler. We consider two cases: 

\begin{enumerate}
\item \textbf{$\hat{\gamma}_{p,q}$ travels very far:} that is,
\be 
\widehat{\mathcal{E}}_{4,1} = \{ (p,q) \in A \, : \, \hat{N}_{\partial S}(p,q,0,h) > 1, \,   \| \gamma_{p,q}'(0) \| \geq \epsilon(h) \, h^{-1} \}.
\ee 
\item  \textbf{$\hat{\gamma}_{p,q}$ must intersect $\partial S$ in two somewhat nearby positions:} that is,
\be 
\widehat{\mathcal{E}}_{4,2} = \{ (p,q) \in A \, : \, \hat{N}_{\partial S}(p,q,0,h) > 1, \,  \| \gamma_{p,q}'(0) \| < \epsilon(h) \, h^{-1} \}.
\ee
\end{enumerate}

We note immediately that
\be  \label{IneqEasyEps41}
\P[(P,Q) \in \widehat{\mathcal{E}}_{4,1}] \leq \P[\|\gamma_{P,Q}'(0)\| \geq h^{-1} \epsilon(h)] = o(h^{2}),
\ee
where again the last line uses Assumption \ref{AssumptionBoundFIM} in the case of RHMC.

Next, we estimate the probability of $\widehat{\mathcal{E}}_{4,2}$. Let $(p,q) \in \widehat{\mathcal{E}}_{4,2}$. Define the first collision times 
\be
\tau_{\mathrm{hit}}(p,q) &= \inf \{t \geq 0 \, : \, \gamma_{p,q}(t) \in \partial S \}  \\
\hat{\tau}_{\mathrm{hit}}(p,q) &= \inf \{t \geq 0 \, : \, \hat{\gamma}_{p,q}(t) \in \partial S \}.
\ee

By Assumption \ref{DefLocallyWellBehaved}, there exists $C > 0$ so that
\be  \label{ContainmentAlsoNHatStarter}
(\partial S \cap A_{10}) \subset \hat{\gamma}_{p,q}(\hat{\tau}_{\mathrm{hit}}(p,q)) + \widehat{W}'(p,q),
\ee  
where $\widehat{W}'(p,q)$ is the envelope with quadratic boundary:
\be
\widehat{W}'(p,q) = \{ x \in \mathbb{R}^{d} \, : \, \| \mathbbm{Proj}_{\hat{\gamma}_{p,q}(\hat{\tau}_{\mathrm{hit}}(p,q))} (x) - x \| \leq C  \| \mathbbm{Proj}_{\hat{\gamma}_{p,q}(\hat{\tau}_{\mathrm{hit}}(p,q))} (x) - \hat{\gamma}_{p,q}(\hat{\tau}_{\mathrm{hit}}(p,q)) \|^{2} \}.
\ee

 Note that, since the boundary of $\widehat{W}'$ consists of two quadratic functions, we have for all sufficiently small $h$ that there exists $C_{1} > 0$ so that
\be 
\widehat{W}'(p,q) \cap \{x \, : \, \|q- x \| \leq \epsilon(h)\} \subset \widehat{W}''(p,q),
\ee 
where  
\be 
\widehat{W}''(p,q) = \{ \hat{\gamma}_{p,q}(\hat{\tau}_{\mathrm{hit}}(p,q)) + v \, : \,  \frac{|\langle v, \eta(\hat{\gamma}_{p,q}(\hat{\tau}_{\mathrm{hit}}(p,q))) \rangle|}{ \|v \| }  \leq C_{1} \epsilon(h)^{2} \}
\ee
is a linear envelope in the neighbourhood of the intersection point $\hat{\gamma}_{p,q}(\hat{\tau}_{\mathrm{hit}}(p,q))$. Thus, 
\be  \label{IneqAngleCondHatN42}
 \frac{|\langle \gamma_{p,q}'(0), \eta(\hat{\gamma}_{p,q}(\hat{\tau}_{\mathrm{hit}}(p,q))) \rangle|}{\|  \gamma_{p,q}'(0) \|} \leq C_{1}' \epsilon(h),
\ee 
for some constant $C_{1}' = C_{1}(1 + o(1))$.

However, by Assumption \ref{DefLocallyWellBehaved}, we have for all fixed $c > 0$

\be 
\sup_{x\in \partial S: \|x - q \| \leq c \epsilon(h)} \| \eta(x) - \eta(s(q)) \| = O(\epsilon(h)).
\ee

Combining this bound with Inequality \eqref{IneqAngleCondHatN42} and the fact that $\|s(q) - \hat{\gamma}_{p,q}(\hat{\tau}_{\mathrm{hit}}(p,q)) \| = O( \epsilon(h))$, there exists $C_{2} > 0$ so that 
\be \label{eq:b1}
\frac{| \langle \gamma_{p,q}'(0), \eta(s(q)) \rangle |}{\|  \gamma_{p,q}'(0) \|} \leq C_{2} \epsilon(h)
\ee 
for all $(p,q) \in \widehat{\mathcal{E}}_{4,2} \backslash \widehat{\mathcal{E}}_{4,1}$. Using the bound $\| \gamma_{p,q}'(0) \| \leq h^{-1} \epsilon(h)$ for $(p,q) \in \widehat{\mathcal{E}}_{4,2} \backslash \widehat{\mathcal{E}}_{4,1}$, we have:

\be  \label{IneqPNearOrthoNHat}
| \langle \gamma_{p,q}'(0), \eta(s(q)) \rangle | \leq C_{2} h^{-1} \epsilon(h)^{2}
\ee

for all $(p,q) \in \widehat{\mathcal{E}}_{4,2} \backslash \widehat{\mathcal{E}}_{4,1}$. Applying Assumption  \ref{DefLocallyWellBehaved} we can strengthen this to the uniform bound  
\be 
\sup_{ q' \in A \, : C_{2} \, \|q' - q \| \leq 10 \epsilon(h)} | \langle  \gamma_{p,q}'(0), \eta(s(q')) \rangle | \leq C_{3} h^{-1} \epsilon(h)^{2}
\ee
for some (perhaps larger) $C_{3} > 0$.
 However, this also implies that $\gamma_{p,q}'(0)$ must have a very small velocity in the direction of $\partial S$, and so applying Lemma \ref{LemmaLinApprox} gives 
\be \label{IneqQVeryCloseNHat}
\inf_{q'' \in \partial S} \| q - q'' \| = O(h^{-1} \epsilon(h)^{2}) + O(\epsilon(h)) = O(h^{-1} \epsilon(h)^{2}).
\ee 

Combining Inequalities \eqref{IneqPNearOrthoNHat} and \eqref{IneqQVeryCloseNHat}, there exists $0 < C_{4} < \infty$ so that 
\be  \label{EqMainCalcConcHatE4}
| \langle \gamma_{p,q}'(0), \eta(s(q)) \rangle | \leq C_{4} h^{-1} \epsilon(h)^{2}, \,  q \in (\partial S)_{C_{4} h^{-1} \epsilon(h)^{2}}
\ee 
both hold for all $(p,q) \in \widehat{\mathcal{E}}_{4,2} \backslash \widehat{\mathcal{E}}_{4,1}$. Setting 
\be 
\mathcal{A}(h) = \{ q \in (\partial S)_{C_{4} h^{-1} \epsilon(h)^{2}} \, : \, (0,q) \in A\}
\ee 
to simplify the notation in the following calculation, Condition \eqref{EqMainCalcConcHatE4} gives
\be
\P[(P,Q) \in \widehat{\mathcal{E}}_{4,2} \backslash \widehat{\mathcal{E}}_{4,1}] &\leq \P[ \{ | \langle  \gamma_{P,Q}'(0), \eta(s(Q)) \rangle | \leq C_{4} h^{-1} \epsilon(h)^{2} \} \cap \{ Q \in \mathcal{A}(h) \}] \\
&\leq \int_{q \in \mathcal{A}(h)} \P[| \langle  \gamma_{P,q}'(0), \eta(s(q)) \rangle | \leq C_{4} h^{-1} \epsilon(h)^{2}] \pi(q) \mathrm{d}q \\
&=O \left( \int_{q \in \mathcal{A}(h)} \, h^{-1} \epsilon(h)^{2} \, \pi(q) \mathrm{d}q \right)\\
&= O( h^{-1} \epsilon(h)^{2} \P[Q \in  \mathcal{A}(h) ]) \\
&= O(h^{-2} \epsilon(h)^{4}),
\ee
where in the last line we use the fact that $\sup_{ q \in \mathcal{A}(1)} \pi(q) < \infty$ (this follows from the fact that $\pi$ is continuous and $\mathcal{A}(1)$ is bounded).

Combining this with Inequality \eqref{IneqEasyEps41}, we conclude 
\be \label{IneqNoBadHats}
\P[\sup_{x\in \partial S: \|x - q \| \leq \epsilon(h)}, \hat{N}_{\partial S}(P,Q,0,h) > 1] = o(h).
\ee

We must next show the analogous bound for $N_{\partial S}(P,Q,0,h)$. The proof will be very similar, but with slightly wider envelopes to allow for the fact that $\gamma_{p,q}$ can be very slightly curved. Since the notation for this argument is fairly involved, we give all of the details;  we will keep closely analogous notation (\textit{e.g.} using the same generic constants for the analogous bounds).

As before, if $N_{\partial S}(p,q,0,h) > 1$, at least one of the following must occur:

\begin{enumerate}
\item \textbf{$\gamma_{p,q}$ travels very far:} we define this bad set to be 
\be 
\mathcal{E}_{4,1} = \{ (p,q) \in A \, : \, N_{\partial S}(p,q,0,h) > 1, \,   \max_{0 \leq t \leq h} \max( \|\gamma_{p,q}(t) - q \|,\,  h \, \epsilon(h)^{-1} \|\gamma_{p,q}'(t) \|) \geq  \epsilon(h)  \}.
\ee 
\item  \textbf{$\gamma_{p,q}$ must intersect $\partial S$ in two somewhat nearby positions:} we define this bad set to be
\be 
\mathcal{E}_{4,2} = \{ (p,q) \in A \, : \, N_{\partial S}(p,q,0,h) > 1, \,   \max_{0 \leq t \leq h} \max( \|\gamma_{p,q}(t) - q \|,\,  h \, \epsilon(h)^{-1} \|\gamma_{p,q}'(t) \|) <  \epsilon(h)  \}.
\ee
\end{enumerate}

We note by Lemma \ref{LemmaLinApprox} that there exists some $c > 0$ so that
\be 
\P[(P,Q) \in \mathcal{E}_{4,1}] \leq \P[\|\gamma_{P,Q}'(0)\| \geq c h^{-1} \epsilon(h)]
\ee
for all $h$ sufficiently small. Thus, applying Assumption \ref{AssumptionBoundFIM} in the case of RHMC,
\be  \label{IneqEasyEps41PartTwo}
\P[(P,Q) \in \mathcal{E}_{4,1}] = O(h^{2}).
\ee

Next, we estimate the probability of $\mathcal{E}_{4,2}$. Let $(p,q) \in \mathcal{E}_{4,2}$.  By Assumption \ref{DefLocallyWellBehaved}, there exists $C > 0$ so that
\be \label{ContainmentAlsoNHatStarter}
(\partial S \cap A_{10}) \subset \gamma_{p,q}(\tau_{\mathrm{hit}}(p,q)) + W'(p,q),
\ee 
where $W'(p,q)$ is the envelope with quadratic boundary:
\be
W'(p,q) = \{ x \in \mathbb{R}^{d} \, : \, \| \mathbbm{Proj}_{\gamma_{p,q}(\tau_{\mathrm{hit}}(p,q))} (x) - x \| \leq C  \| \mathbbm{Proj}_{\gamma_{p,q}(\tau_{\mathrm{hit}}(p,q))} (x) - \gamma_{p,q}(\tau_{\mathrm{hit}}(p,q)) \|^{2} \}.
\ee

Since the boundary of $W'$ consists of two quadratic functions, there exists some constant $C_{1} > 0$ so that, for all sufficiently small $h$, 
\be \label{EqContWP}
W'(p,q) \cap \{x \, : \, \|q- x \| \leq \epsilon(h)\} \subset W''(p,q),
\ee
where  
\be
W''(p,q) = \{ \gamma_{p,q}(\tau_{\mathrm{hit}}(p,q)) + v \, : \,  \frac{| \langle v, \eta(\gamma_{p,q}(\tau_{\mathrm{hit}}(p,q))) \rangle |}{ \|v \| }  \leq C_{1} \epsilon(h)^{2} \}
\ee
is a linear envelope in the neighbourhood of the intersection point $\gamma_{p,q}(\tau_{\mathrm{hit}}(p,q))$. 

We now make our first change from our proof of the bound on $\hat{N}_{\partial S}$. Before finding the analogue to Inequality \eqref{IneqAngleCondHatN42}, we bound how much $\gamma_{p,q}(t)$ can deviate from the direction $\gamma_{p,q}'(\tau_{\mathrm{hit}}(p,q))$ it was taking when it crossed $\partial S$. By Lemma \ref{LemmaLinApprox},  there exists $C_{1}'$ so that 

\be \label{IneqSmallDevIntersectionDirection}
\sup_{\tau_{\mathrm{hit}}(p,q) \leq t \leq h} \| \gamma_{p,q}(t) - (\gamma_{p,q}(\tau_{\mathrm{hit}}(p,q)) + \gamma_{p,q}'(\tau_{\mathrm{hit}}(p,q)) \, (t - \tau_{\mathrm{hit}}(p,q))) \| \leq C_{1}' h^{2}.
\ee 

Thus, combining Inequalities \eqref{EqContWP} and \eqref{IneqSmallDevIntersectionDirection}, there exists $C_{1}'' > 0$ so that
\be  \label{IneqAngleCondNoHatN42}
 \frac{|\langle \gamma_{p,q}'(0), \eta(\gamma_{p,q}(\tau_{\mathrm{hit}}(p,q))) \rangle|}{\|  \gamma_{p,q}'(0) \|} \leq C_{1}'' \epsilon(h).
\ee

However, by Assumption \ref{DefLocallyWellBehaved}, we have
\be 
\sup_{\|x - q \| \leq \epsilon(h)} \| \eta(x) - \eta(s(q)) \| = O(\epsilon(h)).
\ee 

Combining this bound with Inequality \eqref{IneqAngleCondNoHatN42}, there exists $C_{2} > 0$ so that 

\be 
\frac{| \langle \gamma_{p,q}'(0), \eta(s(q)) \rangle |}{\|  \gamma_{p,q}'(0) \|} \leq C_{2} \epsilon(h)
\ee 

for all $(p,q) \in \mathcal{E}_{4,2} \backslash \mathcal{E}_{4,1}$. Using the bound $\| \gamma_{p,q}'(0) \| \leq h^{-1} \epsilon(h)$ for $(p,q) \in \mathcal{E}_{4,2} \backslash \mathcal{E}_{4,1}$,

\be  \label{IneqPNearOrthoNHat2}
| \langle \gamma_{p,q}'(0), \eta(s(q)) \rangle | \leq C_{2} h^{-1} \epsilon(h)^{2}
\ee

for all $(p,q) \in \mathcal{E}_{4,2} \backslash \mathcal{E}_{4,1}$; applying Assumption  \ref{DefLocallyWellBehaved} we can strengthen this to

\be 
\sup_{ q' \, : \, \|q' - q \| \leq 10 \epsilon(h)} | \langle \gamma_{p.q}'(0), \eta(s(q')) \rangle | \leq C_{3} h^{-1} \epsilon(h)^{2}
\ee 

for some $C_{3} > 0$. However, this also implies that $\gamma_{p,q}'(0)$ must have a very small velocity in the direction of $\partial S$, and so applying Lemma \ref{LemmaLinApprox} gives
\be  \label{IneqQVeryCloseNHat2}
\inf_{q'' \in \partial S} \| q - q'' \| = O(h^{-1} \epsilon(h)^{2}) + O(\epsilon(h)) = O(h^{-1} \epsilon(h)^{2}).
\ee

Putting together Inequalities \eqref{IneqPNearOrthoNHat2} and \eqref{IneqQVeryCloseNHat2}, there exists some $0 < C_{4} < \infty$ so that

\be 
| \langle \gamma_{p,q}'(0), \eta(s(q)) \rangle | \leq C_{4} h^{-1} \epsilon(h)^{2}, \, q \in (\partial S)_{C_{4} h^{-1} \epsilon(h)^{2}}
\ee 
for all $(p,q) \in \mathcal{E}_{4,2} \backslash \mathcal{E}_{4,1}$. This bound is identical in form to the displayed expression \eqref{EqMainCalcConcHatE4}, and so using the same calculations we conclude

\be 
\P[(P,Q) \in \mathcal{E}_{4,2} \backslash \mathcal{E}_{4,1}] = o(h).
\ee

Combining this with Inequality \eqref{IneqEasyEps41}, we conclude
\be 
\P[N_{\partial S}(P,Q,0,h) > 1] = o(h).
\ee 

Combining this with Inequality \eqref{IneqNoBadHats} completes the proof.

\end{proof}

Finally, combining Inequalities \eqref{IneqE1Small}, \eqref{IneqE2Small}, \eqref{IneqE3Small} and \eqref{IneqE4Small} we conclude 
\be 
\P[(P,Q) \in A, \, N_{\partial S}(P,Q,0,h) \neq \hat{N}_{\partial S}(P,Q,0,h)] = o(h).
\ee

Actually, these four inequalities imply the slightly stronger bound:

\be 
\P[(P,Q) \in A, \, N_{\partial S}(P,Q,0,h) \neq \mathbbm{1}_{\hat{N}_{\partial S}(P,Q,0,h) > 0}] = o(h).
\ee 

Thus, for all fixed $n \in \mathbb{N}$ 

\be \label{EqGoodHLimBoundComp}
\lim_{h \rightarrow 0} h^{-1} \E[ \min(n, &N_{\partial S}(P,Q,0,h) \mathbbm{1}_{(P,Q) \in A} )] \\
&= \lim_{h \rightarrow 0} h^{-1} \P[\{ \hat{N}_{\partial S}(P,Q,0,h) > 0\} \cap \{(P,Q) \in A \}].
\ee

Define
\be
f(n,h,p,q) = \min(n+1, N_{\partial S}(p,q,0,h) \mathbbm{1}_{(p,q) \in A} )
\ee
and recall the definition of $v^+$ from Equation \eqref{EqDefVPLate}. Applying the monotone convergence theorem and then the dominated convergence theorem to justify the two interchanges of limits, and then applying Assumption \ref{DefLocallyWellBehaved}, Equality \eqref{EqGoodHLimBoundComp} implies   
\be
\lim_{h \rightarrow 0} h^{-1} \E[N_{\partial S}(P,Q,0,h) \mathbbm{1}_{(P,Q) \in A} ] &=  \lim_{h \rightarrow 0} h^{-1} \E[ \lim_{n \rightarrow \infty} f(n,h,P,Q) ] \\
&= \lim_{h \rightarrow 0} h^{-1} \E[ \sum_{n=0}^{\infty} ( f(n+1,h,P,Q) - f(n,h,P,Q) ) ]  \\
&=  \lim_{h \rightarrow 0} \sum_{n=0}^{\infty}  h^{-1} \E[ f(n+1,h,P,Q) -  f(n,h,P,Q) ]  \\
&= \sum_{n=0}^{\infty}  \lim_{h \rightarrow 0} h^{-1} \E[ f(n+1,h,P,Q) -  f(n,h,P,Q) ]  \\
&\stackrel{{\scriptsize \textrm{Eq. }}\ref{EqGoodHLimBoundComp}}{=}   \lim_{h \rightarrow 0} h^{-1} \P[\{ \hat{N}_{\partial S}(P,Q,0,h) > 0\} \cap \{(P,Q) \in A \}] \\
&= 2 \lim_{h \rightarrow 0} h^{-1} \int_{A \cap (\partial S) } \int_{\mathbb{R}^d} \mu_{H}(p,q) \cdot h \langle v^+(p,q), \eta(q) \rangle \mathrm{d}p \mathrm{d}q\\
&= 2 \int_{A \cap (\partial S) } \int_{\mathbb{R}^d} \mu_{H}(p,q) \cdot \langle v^+(p,q), \eta(q) \rangle \mathrm{d}p \mathrm{d}q, 
\ee
where the formula in the third equality holds because $\hat{N}_{\partial S}(P,Q,0,h)$ counts the number of intersections of \emph{linear} trajectories.

Combining this with Equality \eqref{EqBreakUpPhiDeriv}, we conclude  that 
\be
\frac{d \Phi^{+}(t)}{dt} =  \sum_{A \in \mathfrak{R}} \int_{A \cap (\partial S) } \int_{\mathbb{R}^d} \mu_{H}(p,q) \cdot \langle v^+(p,q), \eta(q) \rangle \mathrm{d}p \mathrm{d}q,
\ee
so that
\be
\begin{split}
\Phi^+ =  \int_0^T \int_{\partial S} \int_{\mathbb{R}^d} \mu_{H}(p,q) \cdot \langle v^+(p,q), \eta(q) \rangle \mathrm{d}p \mathrm{d}q \mathrm{d}t \\
=T \cdot \int_{\partial S} \int_{\mathbb{R}^d}  \mu_{H}(p,q) \cdot \langle v^+(p,q), \eta(q) \rangle \mathrm{d}q.
\end{split}
\ee

\end{proof}

\subsection{Proof of Lemma \ref{LemmSuffLoc}} \label{SecLemmSuffLoc}

Since $\mathfrak{A} \subset \mathfrak{B}$, it is clear that $\inf_{S \in \mathfrak{A}} \Phi(K_{T},S) \geq \inf_{S \in \mathfrak{B}} \Phi(K_{T},S)$. Thus, it remains to show only that $\inf_{S \in \mathfrak{A}} \Phi(K_{T},S) \leq \inf_{S \in \mathfrak{B}} \Phi(K_{T},S)$. 

Fix some set $S \in \mathfrak{B}$ with $0 < \pi(S) < \frac{1}{2}$ and fix some constant $0 < \epsilon < \frac{\pi(S)}{100}$. Since $\pi$ has a continuous density with respect to Lebesgue measure, there exists a countable collection of open rectangles $\{ \hat{R}_{i}\}_{i \in \mathbb{N}}$ such that the following 
both hold:
\be 
S & \subset \cup_{i \in \mathbb{N}} \hat{R}_{i} \\
\pi(S) &\leq \sum_{i \in \mathbb{N}} \pi(\hat{R}_{i}) + \epsilon.
\ee 
The existence of a collection of rectangles with this property is shown during standard proofs of the Lebesgue regularity theorem (see \textit{e.g.} the proof of Lemma 1.2.12 of \cite{tao2011introduction}). Since $\pi$ is a probability measure with a continuous density with respect to Lebesgue measure, there exists some $n \in \mathbb{N}$ such that
\be 
\pi(S \Delta \cup_{i=1}^{n} \hat{R}_{i}) \leq 2 \epsilon,
\ee 
where $\Delta$ denotes the symmetric difference of two sets, namely $A \Delta B:=   (A \cup B) \backslash (A \cap B)$ for any two sets $A$ and $B$.
Let $R = \cup_{i=1}^{n} \hat{R}_{i}$. We next note that $R$ can be covered by a ``rounded" polyhedron that is an element of $\mathfrak{A}$, and which has only slightly larger measure. We give an explicit construction of such a covering here. 

To make the notation more legible, we denote by $A(\delta) = A_{\delta}$ the $\delta$-thickening of a set $A$. Since $R$ is a finite union of rectangles, there exists some $\delta_{1} > 0$ so that $R(\delta) \in \mathfrak{A}$ for all $0 < \delta < \delta_{1}$. Since $\pi$ is a probability measure with continuous density, there exists some $\delta_{2} > 0$ so that $\pi(R(\delta) \backslash R) \leq \epsilon$ for all $0 < \delta < \delta_{2}$. Defining $U = R(\frac{\min(\delta_{1},\delta_{2})}{2})$, we have 
\be 
U &\in \mathfrak{A} \\
\pi(S \Delta U) &\leq \pi(S \Delta R) + \pi(U \backslash R) \leq 3 \epsilon.
\ee 
For any $S \in \mathfrak{B}$ such that $12 \epsilon \leq \pi(S) \leq \frac{1}{2}$, we then have

\be \label{IneqIntRelSetCond}
\Phi(K_{T},U) &= \frac{\P[Q \in U, \gamma_{P,Q}(T) \notin U]}{\pi(U) } \\
&\leq \frac{\P[Q \in S, \gamma_{P,Q}(T) \notin S] + \P[Q \in U \backslash A] + \P[\gamma_{P,Q}(T) \in S \backslash U]}{\pi(U)}. \\
&\leq \frac{\P[Q \in S, \gamma_{P,Q}(T) \notin S] + \P[Q \in U \backslash S] + \P[\gamma_{P,Q}(T) \in S \backslash U]}{\pi(S) - \pi(S \backslash U)  } \\
&\leq \frac{\P[Q \in S, \gamma_{P,Q}(T) \notin S] + 6 \epsilon}{\pi(S) - 3 \epsilon } \\
&\leq  \frac{\P[Q \in S, \gamma_{P,Q}(T) \notin S] + 6 \epsilon}{\pi(S) } \left(1 + \frac{6 \epsilon}{\pi(S)} \right) \\
&= \left( \Phi(K_{T},S) + \frac{6 \epsilon}{\pi(S) } \right)\left(1 + \frac{6 \epsilon}{\pi(S)} \right),
\ee 
where the fourth inequality holds since $\pi(S) \geq 12 \epsilon$ and $\epsilon < \frac{1}{100}$. Since $\epsilon > 0$ can be chosen to be arbitrarily small, this implies 
\be 
\inf_{U \in \mathfrak{A}} \Phi(K_{T},U) \leq \Phi(K_{T},S).
\ee 
for any $S \in \mathfrak{B}$  such that $0<\pi(S) <\frac{1}{2}$. Since $S \in \mathfrak{B}$ is arbitrary, this completes the proof.

\section{Example Calculations}

We give some generic bounds related to metastability, and then analyze our main examples. We recall some general definition that will be used throughout this appendix.

\begin{defn} [Trace Chain]
Let  $K$ be the transition kernel of an ergodic Markov chain on state space $\Omega$ with stationary measure $\mu$, and let $S \subset \Omega$ be a subset with $\mu(S) > 0$. Let $\{X_{t}\}_{t \geq 0}$ be a Markov chain evolving according to $K$, and iteratively define
\be 
c_{0} &= \inf \{t \geq 0 \, : \, X_{t} \in S \} \\
c_{i+1} &= \inf \{t > c_{i} \, : \, X_{t} \in S \}.
\ee 
Then 
\be \label{EqTraceCoup}
\hat{X}_{t} = X_{c_{t}}, \quad t \geq 0
\ee 
is the \textit{trace of $\{X_{t}\}_{t \geq 0}$ on $S$}. Note that $\{\hat{X}_{t}\}_{t \geq 0}$ is a Markov chain with state space $S$, and so this procedure also defines a transition kernel with state space $S$. We call this kernel the \textit{trace of the kernel $K$ on $S$}.
\end{defn}

\begin{defn} [Hitting Time]
Let $\{X_{t}\}_{t \geq 0}$ be a Markov chain with initial point $X_{0} = x$ and let $S$ be a measurable set. Then 
\be 
\tau_{x,S} = \inf \{t \geq 0 \, : \, X_{t} \in S\}
\ee 
is called the \textit{hitting time} of $S$.
\end{defn}

\subsection{Generic Metastability Bounds} \label{SecGenMetaBd}

We recall some results from our companion paper \cite{mangoubi2018simple}. We need some standard definitions:

\begin{defn} [Trace Chain]
Let  $K$ be the transition kernel of an ergodic Markov chain on state space $\Omega$ with stationary measure $\mu$, and let $S \subset \Omega$ be a subset with $\mu(S) > 0$. Let $\{X_{t}\}_{t \geq 0}$ be a Markov chain evolving according to $K$, and iteratively define
\be 
c_{0} &= \inf \{t \geq 0 \, : \, X_{t} \in S \} \\
c_{i+1} &= \inf \{t > c_{i} \, : \, X_{t} \in S \}.
\ee 
Then 
\be \label{EqTraceCoup}
\hat{X}_{t} = X_{c_{t}}, \quad t \geq 0
\ee 
is the \textit{trace of $\{X_{t}\}_{t \geq 0}$ on $S$}. Note that $\{\hat{X}_{t}\}_{t \geq 0}$ is a Markov chain with state space $S$, and so this procedure also defines a transition kernel with state space $S$. We call this kernel the \textit{trace of the kernel $K$ on $S$}.
\end{defn}

\begin{defn} [Hitting Time]
Let $\{X_{t}\}_{t \geq 0}$ be a Markov chain with initial point $X_{0} = x$ and let $S$ be a measurable set. Then 
\be 
\tau_{x,S} = \inf \{t \geq 0 \, : \, X_{t} \in S\}
\ee 
is called the \textit{hitting time} of $S$.
\end{defn}

Denote by $\{Q_{\beta}\}_{\beta \geq 0}$ the transition kernels of ergodic Markov chains with stationary measures $\{ \mu_{\beta}\}_{ \beta \geq 0}$ on common state space $\Omega$, which we take to be a convex subset of Euclidean spac. Throughout, we will always use the subscript $\beta$ to indicate which chain is being used - for example, $\Phi_{\beta}(S)$ is the conductance of the set $S$ with respect to the chain $Q_{\beta}$. For any set $S$ with $\pi_{\beta}(S) > 0$, define the restriction $\pi_{\beta} |_{S}$ of $\pi_{\beta}$ to $S$
\be 
\pi_{\beta} |_{S}(A) = \frac{\pi_{\beta}(S \cap A)}{\pi_{\beta}(S)}.
\ee

Fix $S \subset \Omega$ with $\inf_{\beta \geq 0} \pi_{\beta}(S) \equiv c_{1} > 0$ and, for all $\beta \geq 0$. Let $\gd_{\beta}, \bd_{\beta}, \cov_{\beta} \subset S$ satisfy 
\be 
\gd_{\beta} \subset \cov_{\beta}, \qquad \bd_{\beta} \subset \cov_{\beta}^{c}.
\ee 
In the following assumption, we think of the set $\gd_{\beta}$ as the points that are ``deep within" the mode $S$, the points $\bd_{\beta}$ as the points that are ``far in the tails" of the target distribution, and the ``covering set" $\cov_{\beta}$ as a way of separating these two regions.

\begin{assumptions} \label{AssumptionsMeta1}
 We assume the following all hold for $\beta > \beta_{0}$ sufficiently large:
\begin{enumerate}
\item \textbf{Small Conductance:} There exists some $c > 0$ such that $\Phi_{\beta}(S) \leq e^{-c \beta}$.
\item \textbf{Rapid Mixing Within $\gd_{\beta}$:}  Let $\hat{Q}_{\beta}$ be the Metropolis-Hastings chain with proposal kernel $Q_{\beta}$ and target distribution $\pi_{\beta} |_{S}$. There exists some function $r_{1}$ bounded by a polynomial such that 
\be  \label{IneqMeta1MixingRestrictions}
\sup_{x \in \gd_{\beta}} \| \hat{Q}_{\beta}^{r_{1}(\beta)}(x,\cdot) - \pi_{\beta} |_{S}(\cdot) \|_{\mathrm{TV}} \leq \beta^{-2} \Phi_{\beta}(S).
\ee 
\item \textbf{Never Stuck In $\cov_{\beta}\backslash \gd_{\beta}$:} There exists some function $r_{2}$ bounded by a polynomial such that 
\be \label{IneqNeverStuckInEscM1}
\sup_{x \in \cov_{\beta}\backslash \gd_{\beta}} \P[\tau_{x, \gd_{\beta} \cup S^{c}} > r_{2}(\beta)] \leq  \beta^{-2} \Phi_{\beta}(S).
\ee 
\item \textbf{Never Hitting $\cov_{\beta}^{c}$:} We have 
\be \label{IneqNeverHittingBad1}
\sup_{x \in \gd_{\beta} } \P[\tau_{x, \cov_{\beta}^{c}} < \min( r_{1}(\beta) + r_{2}(\beta) + 1, \tau_{x,S^{c}})] \leq \Phi_{\beta}(S)^{4}.
\ee 
\end{enumerate}
\end{assumptions}

Theorem 1 of \cite{mangoubi2018simple} is:

\begin{lemma} [Hitting Times and Conductance] \label{LemmaMeta1}
Let Assumptions \ref{AssumptionsMeta1} hold, and fix a point $x$ that is in  $\gd_{\beta} $ for all $\beta > \beta_{0}(x)$ sufficiently large. Then for all $\epsilon > 0$,
\be 
\P \left[ \frac{\log(\tau_{x,S^{c}})}{\log(\Phi_{\beta}(S))} > 1 + \epsilon \right] = o(1).
\ee 

\end{lemma}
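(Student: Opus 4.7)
The event $\{\log(\tau_{x,S^c})/\log(\Phi_\beta(S)) > 1+\epsilon\}$ is read as the upper-tail event that $\tau_{x,S^c}$ exceeds the natural scale $\Phi_\beta(S)^{-1}$ by an exponential factor, i.e.\ $\{\tau_{x,S^c} > \Phi_\beta(S)^{-(1+\epsilon)}\}$ (the ratio itself is negative for $\beta$ large since $\log \Phi_\beta(S) < 0$ by Assumption (1), so the inequality must be interpreted with appropriate signs). Writing $\Phi := \Phi_\beta(S)$ and $T := \lceil \Phi^{-(1+\epsilon)}\rceil$, my goal is $\P_x[\tau_{x,S^c} > T] = o(1)$. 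The heuristic I would turn into a proof is that from near-stationarity on $S$ the per-step escape probability is exactly $\Phi$, so over $T$ steps the expected number of escape attempts is $T\Phi = \Phi^{-\epsilon} \to \infty$; Assumptions (2)--(4) are designed to upgrade this heuristic for the chain started at $x \in \gd_\beta$.

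The first step is a coupling. I would couple $\tilde X_t$ (kernel $Q_\beta$, started at $x$) to the restricted Metropolis--Hastings chain $\hat X_t$ (kernel $\hat Q_\beta$, started at $x$) by sharing proposals: draw $Y_{t+1} \sim Q_\beta(\tilde X_t,\cdot)$, set $\tilde X_{t+1} = Y_{t+1}$, and set $\hat X_{t+1} = Y_{t+1}$ if $Y_{t+1} \in S$ else $\hat X_{t+1} = \hat X_t$. Then $\tilde X_t = \hat X_t$ for all $t \le \tau_{x,S^c}$, and $\tau_{x,S^c}$ equals the first ``escape proposal'' time of $\hat X$. Writing $g(y) := Q_\beta(y,S^c)$, which satisfies $\int g\,d\pi_\beta |_{S} = \Phi$, this identity yields
\[
\P_x[\tau_{x,S^c} > T] = \E_x\!\left[\prod_{t=0}^{T-1}\bigl(1 - g(\hat X_t)\bigr)\right] \le \E_x\!\left[\exp\!\Big({-}\sum_{t=0}^{T-1} g(\hat X_t)\Big)\right],
\]
reducing matters to a high-probability lower bound on $\sum_{t<T} g(\hat X_t)$.

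I would obtain this lower bound by partitioning time into $N := T/K$ epochs of length $K := r_1(\beta) + r_2(\beta) + 1$, which is polynomial in $\beta$. Starting an epoch from $y \in \gd_\beta$, Assumption (4) bounds by $\Phi^4$ the probability that $\hat X$ reaches $\cov_\beta^c$ during the epoch prior to any escape; on that good event, Assumption (2) gives $\|\mathrm{law}(\hat X_{r_1}) - \pi_\beta |_{S}\|_{\TV} \le \beta^{-2}\Phi$, hence (since $\|g\|_\infty \le 1$) $\E_y[g(\hat X_{r_1})] \ge (1-\beta^{-2})\Phi$; and every time $\hat X$ drifts from $\gd_\beta$ into $\cov_\beta \setminus \gd_\beta$, Assumption (3) returns it to $\gd_\beta$ (or to $S^c$) within $r_2$ further steps with failure probability $\beta^{-2}\Phi$, after which the mixing estimate may be reapplied. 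Iterating yields a per-epoch expected contribution $\gtrsim K\Phi$, so $\E_x[\sum_{t<T} g(\hat X_t)] \gtrsim T\Phi = \Phi^{-\epsilon}$; a concentration inequality exploiting weak dependence between epochs upgrades this to an in-probability lower bound, which when substituted into the exponential bound above gives $\P_x[\tau_{x,S^c}>T] = o(1)$.

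The principal obstacle is error accounting across $N \sim \Phi^{-(1+\epsilon)}/K$ epochs. Assumption (4) is quantitatively strong enough, since $N\cdot \Phi^4 = o(1)$, but a naive union bound over Assumption (3) gives total failure $N \beta^{-2}\Phi = \beta^{-2}\Phi^{-\epsilon}/K$, which does not tend to $0$ (because $\Phi^{-\epsilon}$ grows exponentially while $\beta^{-2}$ is only polynomial). Overcoming this requires charging each invocation of Assumption (3) not to the total number of epochs but only to the actual excursions of $\hat X$ out of $\gd_\beta$: under near-stationary behavior the expected number of such excursions per unit time is polynomial in $\beta$, so the total number of invocations of Assumption (3) over the full run is only polynomially larger than $N$, and the corresponding total failure is $o(1)$. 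Setting up this regenerative decomposition---cutting the trajectory at successive entries into $\gd_\beta$, applying Assumption (3) only at those excursion times, and verifying that the per-excursion escape and return rates really do accumulate to $T\Phi$---is the technical heart of the proof.
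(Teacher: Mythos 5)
The paper itself does not prove Lemma \ref{LemmaMeta1}: it is quoted as Theorem 1 of the companion paper \cite{mangoubi2018simple}. Judged on its own merits, your outline has the right ingredients (couple with the restricted chain, epochs of length $r_{1}+r_{2}+1$, invoke Assumptions (2)--(4)), but two steps are genuinely broken.

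First, the opening ``identity'' $\P_x[\tau_{x,S^c}>T]=\E_x\big[\prod_{t<T}(1-g(\hat X_t))\big]$, with $\hat X$ the Metropolized restricted chain, is false, and false in the direction you need. The correct Feynman--Kac representation of the survival probability uses the \emph{proposal-conditioned} kernel $\bar Q(x,dy)\propto Q_\beta(x,dy)\,\mathbbm{1}\{y\in S\}$, not $\hat Q_\beta$, which places the rejected mass on the diagonal: $\hat X$ stays put precisely at steps where it proposed to exit, so it lingers at high-$g$ states and the product over its paths can be strictly \emph{smaller} than $\P_x[\tau>T]$. A three-state reversible example: $S=\{a,b\}$, symmetric $Q$ with $Q(a,a)=0.3$, $Q(a,b)=0.2$, $Q(a,S^c)=0.5$, $Q(b,b)=0.8$, $Q(b,a)=0.2$, $Q(b,S^c)=0$; then $\P_a[\tau>2]=0.35$ while $\E_a[(1-g(\hat X_0))(1-g(\hat X_1))]=0.30$. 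Hence your displayed bound cannot serve as an upper bound on $\P_x[\tau>T]$, and the reduction to a lower bound on $\sum_{t<T} g(\hat X_t)$ collapses; moreover Assumption (2) is stated for $\hat Q_\beta$, not for $\bar Q$, so even a repaired representation would need extra transfer work. None of this machinery is needed: since the unrestricted chain coincides with $\hat X$ up to the first exit-proposal, for any $y\in \gd_{\beta}$ one has directly $\P_y[\tau_{y,S^c}\le r_1(\beta)+1]\ \ge\ \E_y[g(\hat X_{r_1})]\ \ge\ (1-2\beta^{-2})\,\Phi_\beta(S)$, using Assumption (2) and $\int g\,d\pi_\beta|_S=\Phi_\beta(S)$; a single well-timed escape attempt per epoch suffices.

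Second, the error accounting. You correctly note that a union bound of the Assumption (3) failures over all $N\sim\Phi_\beta(S)^{-(1+\epsilon)}/K$ epochs diverges, but your proposed remedy---charging failures to excursions out of $\gd_{\beta}$ and claiming their number is only polynomially larger than $N$---does not help: over an exponentially long horizon the number of excursions is itself typically exponential, so the total failure mass is still of order $\beta^{-2}\Phi_\beta(S)^{-\epsilon}$. The resolution is that failures need only be controlled \emph{up to the first escape}. Run epochs of length $K=r_1+r_2+1$ started in $\gd_{\beta}$ (this is exactly the window built into Assumption (4)); per epoch the escape probability is $p\ge(1-O(\beta^{-2}))\Phi_\beta(S)$, while the probability of a bad epoch end (entering $\cov_{\beta}^{c}$, or failing to return to $\gd_{\beta}\cup S^{c}$ within $r_2$ steps) is $q\le \Phi_\beta(S)^4+\beta^{-2}\Phi_\beta(S)$. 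The geometric race stopped at the first escape gives $\P_x[\tau_{x,S^c}>NK]\le q/p+(1-p)^{N} = O(\beta^{-2})+\exp\!\big(-\Omega(\Phi_\beta(S)^{-\epsilon}/K)\big)=o(1)$, with no need for concentration of $\sum_t g(\hat X_t)$ or any regenerative decomposition.
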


The other result we need has assumptions:

\begin{assumptions} \label{AssumptionsMeta2}
Let $\Omega = S^{(1)} \sqcup S^{(2)}$ be a partition of $\Omega$ into two pieces. Set $\Phi_{\min} = \min(\Phi_{\beta}(S^{(1)}), \Phi_{\beta}(S^{(2)}))$ and $\Phi_{\max} = \max(\Phi_{\beta}(S^{(1)}), \Phi_{\beta}(S^{(2)}))$. Assume that:

\begin{enumerate}
\item \textbf{Metastability of Sets:} Each set $S^{(i)}$ satisfies Assumption \ref{AssumptionsMeta1} (with $\Phi_{\beta}(S)$ replaced by $\Phi_{\max}$ in Part \textbf{(1)} and replaced by $\Phi_{\min}$ in Parts \textbf{(2-4)}). We use the superscript $(i)$ to extend the notation of that assumption in the obvious way.
\item \textbf{Lyapunov Control of Tails:} 
Denote by $B_{r}(x)$ the ball of radius $r > 0$ around a point $x \in \Omega$. Assume there exist $0 < m, M < \infty$ satisfying
\be \label{IneqLyapContain}
\cup_{i=1}^{2} \cov_{\beta}^{(i)} \subset B_{M}(0), \qquad
B_{m}(0) \subset \cup_{i=1}^{2} \gd_{\beta}^{(i)}.
\ee 
Assume there exist a collection of privileged points $s_{i} \in \gd_{\beta}^{(i)}$ such that the function $V_{\beta}(x) = e^{\beta \min_{1 \leq i \leq k} \| x - s_{i} \|}$ satisfies
\be \label{IneqLyapMain}
(Q_{\beta} V_{\beta})( x) \leq (1 - \frac{1}{r_{3}(\beta)}) V_{\beta}( x) + r_{4} e^{\ell \beta}
\ee 
for all $x \in \Omega$, where $r_{3}, r_{4}$ are bounded by polynomials and $0 \leq \ell < m$. 
\item \textbf{Never Hitting $\cov_{\beta}^{c}$:} We have the following variant of Inequality \eqref{IneqNeverHittingBad1}:
\be \label{IneqNeverHittingBad2}
\sup_{x \in \gd_{\beta}^{(1)} \cup \gd_{\beta}^{(2)} } \P[\tau_{x, (\cov_{\beta}^{(1)} \cup \cov_{\beta}^{(2)})^{c}} < \Phi_{\min}^{-2}] &\leq \Phi_{\min}^{4}.
\ee 

\item \textbf{Non-Periodicity:} We have 
\be \label{IneqNP1}
\inf_{\beta} \inf_{x \in S^{(1)} } Q_{\beta}(x,S^{(1)}) \equiv c_{2}^{(1)} > 0, \quad 
\inf_{\beta}  \inf_{x \in S^{(2)} }  Q_{\beta}(x,S^{(2)}) \equiv c_{2}^{(2)} > 0, \\
\ee 
and,
\be \label{IneqNP2}
\sup_{x \in S^{(1)}} Q_{\beta}(x,S^{(2)} \backslash \gd_{\beta}^{(2)}) < \Phi_{\min}^{4}, \quad
\sup_{x \in S^{(2)}} Q_{\beta}(x,S^{(1)} \backslash \gd_{\beta}^{(1)}) < \Phi_{\min}^{4}.
\ee 

\end{enumerate} 
\end{assumptions}

Theorem 2 of \cite{mangoubi2018simple} gives:

\begin{lemma} [Spectral Gap and Conductance] \label{LemmaMeta2}
Let Assumptions \ref{AssumptionsMeta2} hold. Denote by $\lambda_{\beta}$ and $\Phi_{\beta}$ the spectral gap and conductance of $Q_{\beta}$. Then
\be \label{EqSpecGapCondConc}
\lim_{\beta \rightarrow \infty} \frac{\log(\lambda_{\beta})}{\log(\Phi_{\min})} = \lim_{\beta \rightarrow \infty} \frac{\log(\Phi_{\beta})}{\log(\Phi_{\min})} = 1.
\ee 

\end{lemma}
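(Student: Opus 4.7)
The plan is to prove the two limit statements by establishing matching upper and lower bounds of the form $\Phi_\beta = \Phi_{\min}^{1+o(1)}$ and $\lambda_\beta = \Phi_{\min}^{1+o(1)}$ as $\beta \to \infty$. The upper bounds are essentially immediate: since each $S^{(i)}$ is itself an admissible set in the definition of $\Phi_\beta$, we have $\Phi_\beta \leq \min_i \Phi_\beta(S^{(i)}) = \Phi_{\min}$, and Cheeger's inequality \eqref{IneqCheegPoin} then gives $\lambda_\beta \leq 2 \Phi_\beta \leq 2 \Phi_{\min}$. Because $\Phi_{\min} \to 0$, taking logarithms and dividing by $\log(\Phi_{\min})$ (which is negative and tends to $-\infty$) yields $\log(\Phi_\beta)/\log(\Phi_{\min}) \geq 1 + o(1)$ and $\log(\lambda_\beta)/\log(\Phi_{\min}) \geq 1 + o(1)$.

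For the conductance lower bound, I would fix $\epsilon > 0$ and take an arbitrary measurable $A$ with $\pi_\beta(A) \in (0, 1/2]$. Writing $\alpha_i = \pi_\beta(A \cap S^{(i)})/\pi_\beta(S^{(i)})$, the argument splits into two cases. If $\min(\alpha_i, 1 - \alpha_i) \geq \Phi_{\min}^{\epsilon}$ for some $i$, then $A$ properly partitions one of the modes, and the rapid mixing inside $\gd_\beta^{(i)}$ (Assumption \ref{AssumptionsMeta1}(2)), combined with the rarity of excursions into $(\cov_\beta^{(i)})^c$ (Assumptions \ref{AssumptionsMeta1}(4) and \ref{AssumptionsMeta2}(3)) and the Lyapunov containment \eqref{IneqLyapContain}, forces the conductance of $A$ within $S^{(i)}$ to be inverse-polynomial in $\beta$, which overwhelms $\Phi_{\min}^{1+\epsilon}$. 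Otherwise each $\alpha_i$ is within $\Phi_{\min}^{\epsilon}$ of $\{0,1\}$, so $A$ differs from one of $\emptyset, S^{(1)}, S^{(2)}, S^{(1)} \cup S^{(2)}$ by a set of $\pi_\beta$-mass $O(\Phi_{\min}^{\epsilon})$. A short perturbation estimate analogous to Inequality \eqref{IneqIntRelSetCond} then gives $\Phi_\beta(A) \geq (1 - o(1)) \Phi_{\min}$.

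For the spectral gap lower bound, the naive Cheeger bound $\lambda_\beta \geq \Phi_\beta^2/2$ only yields $\log(\lambda_\beta)/\log(\Phi_{\min}) \leq 2 + o(1)$, which is off by a factor of $2$. To sharpen this I would invoke a decomposition (or restriction/projection) theorem in the style of Jerrum--Sinclair--Vigoda or Madras--Randall, relating $\lambda_\beta$ to (i) the spectral gaps of the trace chains $\hat{Q}_\beta^{(i)}$ on each mode, and (ii) the spectral gap of the projected two-state chain on $\{S^{(1)}, S^{(2)}\}$. Assumption \ref{AssumptionsMeta1}(2) upgrades mixing-time bounds on $\hat{Q}_\beta^{(i)}$ into inverse-polynomial spectral gaps, while the projected chain has off-diagonal transition probabilities $\asymp \Phi_\beta(S^{(i)})$ by the definition of conductance and hence spectral gap $\asymp \Phi_{\min}$. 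The decomposition inequality then yields $\lambda_\beta \gtrsim \Phi_{\min}/\mathrm{poly}(\beta)$, so $\log(\lambda_\beta)/\log(\Phi_{\min}) \leq 1 + o(1)$ as desired.

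The main obstacle is checking the hypotheses of the decomposition theorem in our setup. The theorem as usually stated applies to restrictions of the chain to each $S^{(i)}$, whereas our natural object is the trace chain, which identifies all excursions that leave and return to $S^{(i)}$ via $(\cov_\beta^{(1)} \cup \cov_\beta^{(2)})^c$. Assumption \ref{AssumptionsMeta2}(3) ensures such excursions have probability at most $\Phi_{\min}^4$ per step, making the trace and restriction close in operator norm to within $O(\Phi_{\min}^2)$, which is negligible compared with the target spectral gap $\Phi_{\min}^{1+o(1)}$. The non-periodicity conditions \eqref{IneqNP1}--\eqref{IneqNP2} then prevent the bottom of the spectrum of $Q_\beta^\perp$ from approaching $-1$, ensuring that $\rho(Q_\beta) = 1 - \lambda_2(Q_\beta)$ is indeed controlled by the slow positive eigenvalue produced by the projected chain rather than by a negative eigenvalue from near-periodicity.
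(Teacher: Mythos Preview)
The paper does not prove Lemma~\ref{LemmaMeta2} at all: immediately before the statement it writes ``Theorem 2 of \cite{mangoubi2018simple} gives:'' and then simply quotes the result. The proof lives entirely in the companion paper, so there is no in-paper argument to compare your proposal against.

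That said, your outline is a plausible reconstruction of how such a result is typically proved, and it is worth flagging one point where you have misread the assumptions. You write that ``our natural object is the trace chain'' and worry about transferring bounds between trace and restriction. But Assumption~\ref{AssumptionsMeta1}(2) defines $\hat{Q}_\beta$ as ``the Metropolis-Hastings chain with proposal kernel $Q_\beta$ and target distribution $\pi_\beta|_S$''---this \emph{is} the restriction chain (reject moves leaving $S$), not the trace chain. So the object you need for a Madras--Randall / Jerrum--Son--Tetali--Vigoda decomposition is already the one appearing in the hypotheses, and the trace-versus-restriction detour is unnecessary. The genuine residual work is that the mixing bound \eqref{IneqMeta1MixingRestrictions} is only asserted for starting points in $\gd_\beta^{(i)}$, not all of $S^{(i)}$; extending it to the whole piece is where Parts (3)--(4) of Assumption~\ref{AssumptionsMeta1} and the Lyapunov control in Assumption~\ref{AssumptionsMeta2}(2) are actually used, and this step deserves more than a passing mention.

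Your upper-bound paragraph and the conductance case split are sound in outline; the spectral-gap lower bound via decomposition plus non-periodicity to rule out eigenvalues near $-1$ is the right shape. Whether this matches the companion paper's argument in detail cannot be determined from the present paper.
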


\subsection{Proof of Theorem \ref{ThmHmcMultimodal}} \label{AppSubsecPfHmcMulti}

We prove Theorem \ref{ThmHmcMultimodal}, keeping the notation of that theorem. We begin by proving Equation \eqref{EqMultiCheegAsym}. By Theorem \ref{thm:1}, 
\be \label{IneqCondUpperBoundThm1}
\Phi_{\sigma} &= \Phi^+ \cdot \mathbb{E}_\mathbb{Q}\bigg[\frac{1}{N_{\{0\}}} \cdot \mathbbm{1}\{N_{\{0\}} \, \mathrm{odd}\}\bigg] \bigg/ \pi_{\sigma}((-\infty,0)) \\
&= T_{\sigma} \cdot \int_{\{0\}} f_{\sigma}(q) \mathrm{d}q \cdot  \mathbb{E}_\mathbb{Q}\bigg[\frac{1}{N_{\{0\}}} \cdot \mathbbm{1}\{N_{\{ 0\}} \, \mathrm{odd}\}\bigg] \\
&= \sigma \frac{1}{\sqrt{2 \pi} \sigma} e^{-\frac{1}{2 \sigma^{2}}}  \mathbb{E}_\mathbb{Q}\bigg[\frac{1}{N_{\{0\}}} \cdot \mathbbm{1}\{N_{\{ 0\}} \, \mathrm{odd}\}\bigg]   \\
&\leq   \frac{1}{\sqrt{2 \pi}} e^{-\frac{1}{2 \sigma^{2}}}.
\ee

Taking logs, this bound immediately implies the upper bound in Equality \eqref{EqMultiCheegAsym}. The lower bound required for Equality \eqref{EqMultiCheegAsym} is very weak, and we give rather weak estimates to obtain it. Define $I_{\sigma} = (-\sigma^{20},0)$ and $ J_{\sigma} = (\sigma^{12},\sigma^{8})$. Noting that $\sup_{|x| \leq \sigma^{7}} | \nabla \log(f_{\sigma}(x)) | = O(\sigma)$, we have by Lemma \ref{LemmaLinApprox} that

\be 
\inf_{q \in I_{\sigma}, \, p \in J_{\sigma}} \gamma_{p,q}(T_{\sigma}) > 0
\ee 
for all $\sigma$ sufficiently small. Thus,  
\be 
\Phi_{\sigma} &\geq \P[Q \in I_{\sigma}, \, P \in J_{\sigma}] \\
&= \Omega(e^{-\frac{1}{2 \sigma^{2}}} \times \sigma^{32}).
\ee 
This immediately proves the lower bound on Equality \eqref{EqMultiCheegAsym}.

Next, we prove Equation \eqref{EqMultiCheegHitting}, which is the bulk of the proof of Theorem \ref{ThmHmcMultimodal}. The proof will entirely consist of verifying the assumptions \ref{AssumptionsMeta1}, \ref{AssumptionsMeta2} of Lemmas \ref{LemmaMeta1} and \ref{LemmaMeta2}.  In order to keep the proof to a reasonable length, some bounds in this proof are not very explicit. Throughout the proof, we generally absorb all terms that are of smaller order (with respect to the parameter $\sigma$) from line to line in our calculations in order to minimize notational clutter. In particular, we freely use some very weak bounds without comment - most frequently, the fact that a Gaussian with mean 0 and small variance $\sigma^{2} \ll 1$ has negligible probability of taking values as large as \textit{e.g.} $\sigma^{-10}$ on the time-scale of our analysis. Our argument will involve coupling several Markov chains, and so the standard notation for Markov chains ($\{X_{t}\}_{t \in \mathbb{N}}$, $\{Y_{t}\}_{t \in \mathbb{N}}$, etc) will be recycled during the proof; this notational re-use is clearly indicated when it occurs.

We will also find it useful to consider the following auxillary transition kernels. First, denote by $\hat{K}_{\sigma}$ the Metropolis-Hastings  transition kernel on $(-\infty,0)$ that has as its proposal kernel $K_{\sigma}$ and as its target distribution the density 
\be  
\hat{\rho}_{\sigma}(x) = 2 f_{\sigma}(x)  
\ee
on $(-\infty,0)$. In other words, $\hat{K}_{\sigma}$ evolves like $K_{\sigma}$, but rejects moves that take the Markov chain outside of $(-\infty,0)$. 

Next, let $\ell(x)= - \log(f_{\sigma}(x))$, fix $0 < \delta < \frac{1}{20}$, and define $f(x)$ to be the unique quadratic satisfying 
\be \label{UniqueQuadratic1}
f(-2 \delta) = \ell(-2 \delta), \, f'(-2 \delta) = \ell'(-2 \delta), \, f''(-2 \delta) = \ell''(-2 \delta),
\ee 
and define the potential function $U_{\sigma,\delta}$ by the formula 
\be \label{UniqueQuadratic2}
U_{\sigma,\delta}(x) &= \begin{cases}
 - \log(f_{\sigma}(x)), \qquad x < - 2 \delta \\
f(x), \qquad \qquad \quad \, \, x \geq -2 \delta.
\end{cases}
\ee

Define the density $\mu_{\sigma,\delta}(x) \propto e^{-U_{\sigma,\delta}(x)}$ and let $Q_{\sigma,\delta}$ be the transition kernel defined by Algorithm \ref{alg:isotropic_HMC}, with target distribution $\mu_{\sigma,\delta}$ and integration time $T_{\sigma}$.

Throughout the proof, we will construct couplings using the  ``forward mapping" representations of $K_{\sigma}$ and $Q_{\sigma,\delta}$.  Inspecting Algorithm \ref{alg:isotropic_HMC}, we see that the output of the algorithm is entirely defined by deterministic parameters and the i.i.d. sequence of momentums $\{ p_{t}\}_{t \in \mathbb{N}}$ sampled in Step 3 of the for loop. In other words, Algorithm \ref{alg:isotropic_HMC} gives a \textit{forward mapping representation} of the transition kernels $K_{\sigma}$ and $Q_{\sigma,\delta}$. In particular, to couple two Markov chains $\{X_{t}\}_{t \in \mathbb{N}}$, $\{Y_{t}\}_{t \in \mathbb{N}}$ defined by Algorithm \ref{alg:isotropic_HMC}, it is enough to couple the associated i.i.d. sequences. All of our couplings will be defined this way; if two chains share the \textit{same} sequence, we call this the \textit{identity coupling}.

In the notation of  Lemmas \ref{LemmaMeta1} and \ref{LemmaMeta2}, we will use the partition
\be 
S^{(1)} = (-\infty,0), \qquad S^{(2)} = [0,\infty),
\ee 
with decomposition of $S^{(1)}$
\be 
\gd_{\beta}^{(1)} = (-\sigma^{-9},0), \quad \cov_{\beta}^{(1)} = (-\sigma^{-10},0), \quad \bd_{\beta}^{(1)} = (-\infty, -\sigma^{-10})
\ee 
and $\gd_{\beta}^{(2)}$, $\cov_{\beta}^{(2)}$, $\bd_{\beta}^{(2)}$ defined analogously (see Figure \ref{FigPartitionEx}). Note that Part \textbf{(1)} of Assumptions \ref{AssumptionsMeta1} follows immediately from Inequality \eqref{EqMultiCheegAsym}, which we have already proved. 

\begin{figure}[H]
\includegraphics[scale=0.5]{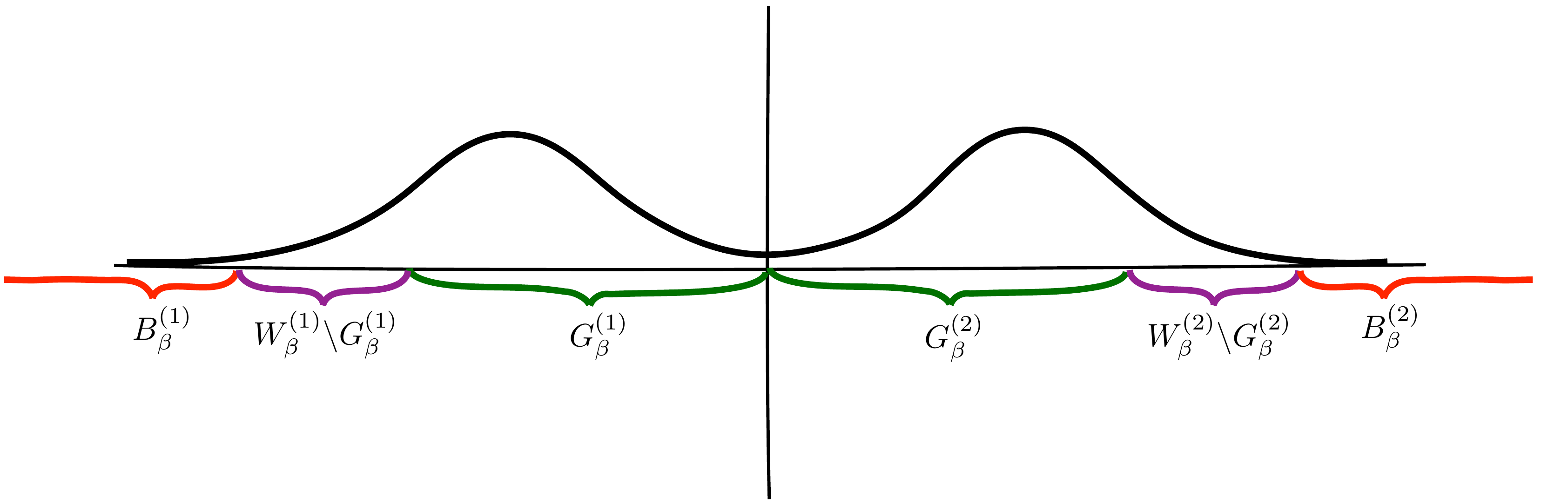}
\caption{A cartoon plot of the target density $\mu_{\sigma}$ with the regions illustrated. Note that we have substantially compressed the regions; in a to-scale drawing, $\bd_{\beta}$ would not be visible. \label{FigPartitionEx}}
\end{figure}

We note that Part \textbf{(1)} of Assumptions \ref{AssumptionsMeta1} follows immediately from Inequality \eqref{IneqCondUpperBoundThm1}. We skip Part \textbf{(2)} for now to prove some Lyapunov-like bounds for $K_{\sigma}$ and $\hat{K}_{\sigma}$:

\begin{lemma} \label{LemmaLyapunovEx}
Let $V_{\sigma}(x) = e^{\sigma^{-1} \min(\|x - 1\|, \|x+1\|)}$. Then there exist $0 < \alpha \leq 1$, $0 \leq M < \infty$ and $C = C(\sigma)$ bounded by a polynomial so that for all $K \in \{K_{\sigma}, \hat{K}_{\sigma}\}$ and $x \in (-\infty,-M \sigma)$, 
\be \label{IneqLyapMulti}
(K V_{\sigma})(x) \leq (1 - \alpha) V_{\sigma}(x) + C.
\ee 
Furthermore, Part \textbf{(2)} of Assumptions \ref{AssumptionsMeta2} holds.
\end{lemma}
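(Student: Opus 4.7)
The plan is to exploit that, on $(-\infty,-M\sigma)$, the density $f_\sigma$ is utterly dominated by its left-Gaussian component---the ratio of the right to the left component at any such point is $e^{2x/\sigma^2}\le e^{-2M/\sigma}$---so that the Hamiltonian flow of $U = -\log f_\sigma$ is exponentially close in $\sigma^{-1}$ to the flow of the exact harmonic Hamiltonian $\tilde H(p,q) = \tfrac12 p^2 + (q+1)^2/(2\sigma^2)$, whose explicit solution will supply the contraction. Writing $y = q+1$, one first notes that $V_\sigma(x) = e^{|y|/\sigma}$ throughout the domain (since $|x+1| \le |x-1|$ whenever $x \le 0$), and solves the harmonic equations to obtain the reference flow $\tilde\gamma_{p,x}(\sigma) = -1 + y\cos(1) + \sigma p\sin(1)$.

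I would then compare $\gamma_{p,x}$ to $\tilde\gamma_{p,x}$ using the Duhamel/variation-of-parameters formula: since $\nabla U - \nabla\tilde U = O(\sigma^{-2} e^{-2M/\sigma})$ uniformly on bounded subsets of $(-\infty,-M\sigma/2)$, the explicit solution of the inhomogeneous harmonic ODE yields $|\gamma_{p,x}(\sigma) - \tilde\gamma_{p,x}(\sigma)| = O(e^{-2M/\sigma})$ as long as the trajectory stays in this favorable region. The escape event---the trajectory reaching $q \ge -M\sigma/2$---requires $|p| \ge \sqrt{M/\sigma}$ (by the explicit form of the harmonic solution) and has probability at most $e^{-M/(2\sigma)}$; on its complement, one obtains the clean pathwise bound $|y(\sigma)|/\sigma \le \cos(1)\,|y|/\sigma + |\sin(1)|\,|p| + o(1)$, where the $o(1)$ is driven by the super-polynomial smallness of $|\gamma - \tilde\gamma|/\sigma$.

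Exponentiating and taking expectation over $p \sim \mathcal{N}(0,1)$ yields $\mathbb{E}[V_\sigma(X_1)\mathbf{1}_{\mathrm{fav}}] \le C_0\, V_\sigma(x)^{\cos(1)}$ with $C_0 := \mathbb{E}[e^{|\sin(1)||P|}]$ absolute. The escape contribution is controlled by the pathwise energy-conservation estimate $V_\sigma(X_1) \le V_\sigma(x)\, e^{|p| + O(1)}$ (from $H$-conservation together with the lower bound $U(q) \ge \min(|q+1|,|q-1|)^2/(2\sigma^2) - O(1)$) combined with Cauchy--Schwarz, yielding at most $V_\sigma(x) \cdot O(e^{-M/(4\sigma)})$; choosing $M > 4(1-\cos(1))$ makes this absorbable into $C_0 V_\sigma(x)^{\cos(1)}$. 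Splitting on whether $V_\sigma(x) \ge V_\ast := (2C_0)^{1/(1-\cos(1))}$ converts the multiplicative estimate into the additive form $(K_\sigma V_\sigma)(x) \le \tfrac12 V_\sigma(x) + C$, with constant $C = C_0 V_\ast^{\cos(1)}$. For $\hat K_\sigma$, the only new term is $V_\sigma(x)\,\mathbb{P}[\gamma_{p,x}(\sigma) \ge 0]$; the harmonic formula shows that reaching $q = 0$ from $y \le 1 - M\sigma$ demands $p \ge c'/\sigma$ for some constant $c' > 0$ uniform in $x$, so this probability is $\le e^{-(c')^2/(3\sigma^2)}$ and the product is uniformly $o(1)$, giving the same inequality for $\hat K_\sigma$ with slightly larger constant.

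The ``furthermore'' clause extends by symmetry to $x > M\sigma$ (swap modes, privileged points $s_1 = -1$, $s_2 = +1$). On the compact middle region $[-M\sigma, M\sigma]$ a direct estimate suffices: $V_\sigma(x) \le e^{1/\sigma}$, and since HMC trajectories initiated near the unstable saddle at $q = 0$ escape to a neighborhood of a mode with overwhelming probability in time $\sigma$ (because the unstable linearization $\ddot q \approx q/\sigma^4$ has growth rate $\sigma^{-2}$, so escape occurs in time $O(\sigma^2 \log \sigma^{-1}) \ll \sigma$, controlled via Lemma \ref{LemmaLinApprox}), $\mathbb{E}[V_\sigma(X_1)]$ is polynomially bounded in $\sigma^{-1}$, so the required form $(Q_\beta V_\beta)(x) \le (1 - 1/r_3(\beta)) V_\beta(x) + r_4 e^{\ell\beta}$ holds trivially (in fact with $\ell = 0$). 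The main obstacle throughout is the \emph{exponential} sensitivity of $V_\sigma$: a mere $O(1)$ error in the exponent $|y(\sigma)|/\sigma$ would destroy the contraction when exponentiated. Happily, the non-harmonic perturbation of $U$ is itself exponentially small in $\sigma^{-1}$, so the Gr\"onwall/Duhamel estimate delivers an $o(1)$ error on the favorable event with ample margin, and on the escape event the pathwise energy bound suffices once $M$ is chosen sufficiently large.
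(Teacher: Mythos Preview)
Your core strategy---replacing the true potential by the harmonic well centered at $-1$, bounding the discrepancy via Duhamel, and reading off the contraction factor $\cos(1)$ from the exact harmonic flow---is correct and is genuinely different from the paper's route. The paper instead observes that $-\log f_\sigma$ is $\tfrac{1}{8\sigma^2}$-strongly convex on $(-\infty,-10\sigma)$, invokes a black-box contraction result for HMC on strongly log-concave targets (Theorem~7 of \cite{mangoubi2017concave}) to obtain the drift inequality modulo a boundary term $\E[V_\sigma(X)\mathbf{1}_{X>-10\sigma}]$, and then defers the boundary term to the companion paper. Your approach is more self-contained and yields explicit constants; the paper's is more modular. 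Both are valid for the main inequality.

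There is, however, a real gap in your escape-contribution estimate when $x<-1$. You claim the escape event requires $|p|\ge\sqrt{M/\sigma}$ uniformly, giving an escape contribution $V_\sigma(x)\cdot O(e^{-M/(4\sigma)})$, and then assert this is absorbable into $C_0 V_\sigma(x)^{\cos(1)}$ once $M>4(1-\cos(1))$. But absorbability requires $V_\sigma(x)^{1-\cos(1)}\le C_0\,e^{M/(4\sigma)}$, i.e.\ $(1-\cos(1))|x+1|\le M/4+O(\sigma)$, which fails for $|x+1|$ large. The fix is to use the $x$-dependent escape threshold: for $y=x+1<0$ the harmonic trajectory satisfies $y(t)\le y\cos(1)+\sigma|p|\sin(1)$ on $[0,\sigma]$, so reaching $y(t)\ge 1-M\sigma/2$ forces $|p|\gtrsim(1+|y|\cos(1))/\sigma$, whence the escape contribution is at most $e^{|y|/\sigma}\cdot Ce^{-c(1+|y|)^2/\sigma^2}\to 0$ uniformly in $y<0$. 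Separately, your claim for the middle region $[-M\sigma,M\sigma]$ that $\E[V_\sigma(X_1)]$ is polynomially bounded is false: a trajectory from the saddle covers only about one radian of the well in time $\sigma$, landing near $q\approx\pm(1-\cos(1))$, so $V_\sigma(X_1)\approx e^{\cos(1)/\sigma}$. The drift inequality still holds there (since $e^{\cos(1)/\sigma}\ll e^{1/\sigma}\approx V_\sigma(x)$), but via the same $\cos(1)$-contraction mechanism, not via a polynomial bound. The paper sidesteps this by using the crude estimate $\sup_{|x|\le M\sigma}(K_\sigma V_\sigma)(x)\le e^{\sigma^{-3}}$.
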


\begin{proof}

After a few small changes to the first lines, the proof is very similar to the proof of Lemma 4.3 in our companion paper \cite{mangoubi2018simple}. Observe that
\be 
\inf_{x \in (-\infty,-10 \sigma)} \frac{d^{2}}{dx^{2}} -\log(f_{\sigma}(x)) \geq \frac{1}{8 \sigma^{2}}.
\ee 
In particular, $f_{\sigma}$ is strongly log-concave on the interval $(-\infty,-10 \sigma)$ with parameter at least $\frac{1}{8 \sigma^{2}}$. Fix $M > 10$, $x \in (-\infty,-M \sigma)$, $K \in \{K_{\sigma}, \hat{K}_{\sigma}\}$ and let $X \sim K_{\sigma}(x,\cdot)$, and let $P \sim \mathcal{N}(0,1)$ be the associated momentum variable for the random mapping representation of $K_{\sigma}$; by Theorem 7 of \cite{mangoubi2017concave}, we have
\be \label{LyapProofPt1}
(K V_{\sigma})(x) &\leq (1 -\alpha) V_{\sigma}(x) + C_{\sigma} + \E[V_{\sigma}(X) \textbf{1}_{X > -10 \sigma}] \\ 
&= (1 -\alpha) V_{\sigma}(x) + C_{\sigma} + \E[V_{\sigma}(X) \textbf{1}_{\sup_{0 \leq s \leq \sigma} \gamma_{P,x}(s) >  -10 \sigma}].
\ee 

The proof of Inequality \eqref{IneqLyapMulti} now concludes as the proof of Lemma 4.3 in \cite{mangoubi2018simple} following Inequality (4.7) (which is nearly identical to Inequality \eqref{LyapProofPt1}).

Finally, Part \textbf{(2)} of Assumptions \ref{AssumptionsMeta2} immediately follows from Inequality \eqref{IneqLyapMulti} in the case $K = K_{\sigma}$ and the trivial inequality
\be 
\sup_{|x| \leq M \sigma} (K_{\sigma} V_{\sigma})(x) \leq e^{\sigma^{-3}}
\ee 
for any fixed $M$ and all $\sigma < \sigma_{0} = \sigma_{0}(A)$ sufficiently small.
\end{proof}

Most of our proof consists of checking that Part \textbf{(2)} of Assumptions \ref{AssumptionsMeta1} holds: 

\begin{lemma} \label{ThmMulti1AppMainMixingMode}
With notation as above, Part \textbf{(2)} of  Assumptions \ref{AssumptionsMeta1} is satisfied.
\end{lemma}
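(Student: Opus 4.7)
The plan is to show rapid mixing of the Metropolis-Hastings chain $\hat{K}_\sigma$ on $(-\infty,0)$ by comparing it to the auxiliary chain $Q_{\sigma,\delta}$ whose target $\mu_{\sigma,\delta}$ has already been introduced via Equations \eqref{UniqueQuadratic1}--\eqref{UniqueQuadratic2}. The key point is that $\mu_{\sigma,\delta}$ agrees with $f_\sigma$ on $(-\infty,-2\delta)$, but replaces the bimodal extension on $[-2\delta,\infty)$ by the unique quadratic continuation of the left-mode potential. Consequently $-\log \mu_{\sigma,\delta}$ is globally $\Omega(\sigma^{-2})$-strongly convex, so existing rapid mixing results for HMC on strongly log-concave targets (as in \cite{mangoubi2017concave, mangoubi2017rapidp1}) give that $Q_{\sigma,\delta}$ mixes in a time $r_{\mathrm{LC}}(\sigma)$ bounded by a polynomial in $\sigma^{-1}$.

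First I would use the Lyapunov bound of Lemma \ref{LemmaLyapunovEx}: since $V_\sigma(x) = e^{\sigma^{-1}|x+1|}$ contracts geometrically under $\hat{K}_\sigma$, any chain started at $x \in \gd_\beta^{(1)} = (-\sigma^{-9},0)$ enters the compact window $A_\sigma = \{y \,:\, |y+1| \leq C \sigma \log(\sigma^{-1})\}$ after $r_0(\sigma) = O(\log(\sigma^{-1}))$ steps, with probability of failure at most, say, $\sigma^{10} \Phi_\sigma$. Having entered $A_\sigma$, I would start a second chain $\{Y_t\}$ from $\mu_{\sigma,\delta}$-stationarity and couple the two chains via the identity coupling on the momentum variables $\{p_t\}$ in Algorithm \ref{alg:isotropic_HMC}.

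The main obstacle is controlling the coupling-failure probability. Under the identity coupling the chains $\{X_t\}$ and $\{Y_t\}$ evolve identically except on two bad events: (i) a $Y_t$-trajectory leaves $(-\infty,-2\delta)$, where the potentials differ, or (ii) an $X_t$-proposal crosses $0$ and is rejected by the Metropolis filter in $\hat{K}_\sigma$. Both events require a single HMC step to displace by $\Omega(1)$ from a starting point that is within $O(\sigma \log \sigma^{-1})$ of $-1$. Since the integration time is $T_\sigma=\sigma$ and the potential is locally quadratic with curvature $\sigma^{-2}$, the Hamiltonian flow on this time scale is close to a quarter-period harmonic oscillator with amplitude controlled by $\|P\| \cdot \sigma$, so a displacement of size $\Omega(1)$ requires $\|P\| = \Omega(\sigma^{-1})$; this occurs with Gaussian tail probability $e^{-\Omega(\sigma^{-2})}$, which is comparable to $\Phi_\sigma$. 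A union bound over $r(\sigma) := r_0(\sigma) + r_{\mathrm{LC}}(\sigma)$ steps gives a total decoupling probability of $O(\mathrm{poly}(\sigma^{-1})\cdot \Phi_\sigma)$.

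Combining these ingredients yields
\be
\|\hat{K}_\sigma^{r(\sigma)}(x,\cdot) - \hat{\rho}_\sigma(\cdot)\|_{\mathrm{TV}}
\leq \|Q_{\sigma,\delta}^{r_{\mathrm{LC}}(\sigma)}(\cdot,\cdot) - \mu_{\sigma,\delta}\|_{\mathrm{TV}}
+ \mathbb{P}[\text{coupling fails}] + \|\mu_{\sigma,\delta}\!\mid_{(-\infty,-2\delta)} - \hat{\rho}_\sigma\!\mid_{(-\infty,-2\delta)}\|_{\mathrm{TV}},
\ee
and each term is at most $\sigma^{-2}\Phi_\sigma$ for $\sigma$ sufficiently small: the first by strong-convexity rapid mixing, the second by the tail calculation above, and the third because both conditional densities on $(-\infty,-2\delta)$ are essentially the Gaussian $\mathcal{N}(-1,\sigma^2)$ truncated away from $\{\pm 2\delta\}$, differing only by normalizing constants that match up to an $e^{-\Omega(\sigma^{-2})}$ correction. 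This gives exactly the bound \eqref{IneqMeta1MixingRestrictions} and verifies Part \textbf{(2)} of Assumptions \ref{AssumptionsMeta1}.
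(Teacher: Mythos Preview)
Your overall strategy---compare $\hat K_\sigma$ to the strongly log-concave auxiliary chain $Q_{\sigma,\delta}$ via an identity coupling and invoke \cite{mangoubi2017concave}---is exactly the paper's starting point. But there is a genuine gap in the quantitative step, and it is not a matter of bookkeeping: your single-pass coupling cannot deliver the required bound $\beta^{-2}\Phi_\beta(S)\approx\sigma^{2}e^{-\frac{1}{2\sigma^{2}}}$.

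The problem is the constant in the exponent. Your bad event (i) is that a trajectory enters $[-2\delta,\infty)$ from a point within $O(\sigma\log\sigma^{-1})$ of $-1$. Under the approximately harmonic dynamics this requires $|P|\gtrsim(1-2\delta)/\sigma$, which has Gaussian tail probability $\exp\bigl(-\tfrac{(1-2\delta)^{2}}{2\sigma^{2}}\bigr)$. For any fixed $\delta>0$ this is larger than $\Phi_\sigma\approx e^{-\frac{1}{2\sigma^{2}}}$ by the enormous factor $e^{(2\delta-2\delta^{2})/\sigma^{2}}$, so a union bound over $\mathrm{poly}(\sigma^{-1})$ steps does \emph{not} give $O(\mathrm{poly}(\sigma^{-1})\cdot\Phi_\sigma)$ as you claim. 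Writing ``$e^{-\Omega(\sigma^{-2})}$, which is comparable to $\Phi_\sigma$'' conflates the order of the exponent with its leading constant; here the leading constant is exactly what matters. You cannot repair this by sending $\delta\to0$: near $x=0$ the potential $-\log f_\sigma$ is \emph{concave} (by symmetry $\ell'(0)=0$ and a direct computation gives $\ell''(0)<0$), so the quadratic extension \eqref{UniqueQuadratic1}--\eqref{UniqueQuadratic2} loses strong convexity and the rapid-mixing input from \cite{mangoubi2017concave} evaporates.

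This is why the paper does not attempt a one-shot argument. Its Proposition \ref{LemmaInitialModeMixingEstimate} uses essentially your coupling to obtain only a \emph{weak} contraction, $\|\hat K_\sigma^{S}(x,\cdot)-\hat K_\sigma^{S}(y,\cdot)\|_{\mathrm{TV}}\le C_2 e^{-c_2\sigma^{-1}}$ for $x,y\in(-\sigma^{-10},-2\delta)$, accepting that this is far larger than $\Phi_\sigma$. The essential extra ingredient is Proposition \ref{LemmaIterateHatMixing}: one shows separately that excursions into $(-2\delta,0)$ are short (via a random-walk domination argument) and excursions below $-\sigma^{-10}$ are rare (via the Lyapunov bound), and then \emph{iterates} the weak contraction $\sim\sigma^{-4}$ times to drive the residual down to $e^{-c_6\sigma^{-5}}\ll\Phi_\sigma$. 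Your sketch is missing precisely this bootstrap, and without it the argument stalls at an error that is exponentially larger (in $\sigma^{-2}$) than the target.

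A secondary issue: your displayed triangle inequality is not quite well-posed. You couple a $\hat K_\sigma$-chain from $x$ directly with a $Q_{\sigma,\delta}$-chain started at stationarity using common momenta; with different kernels \emph{and} different initial points this does not force agreement even on the good set. The paper instead couples (a) the $\hat K_\sigma$-chain with a $Q_{\sigma,\delta}$-chain from the \emph{same} start (these agree until the trajectory leaves $(-\infty,-2\delta)$), and then (b) invokes rapid mixing of $Q_{\sigma,\delta}$ to compare to stationarity. This is easily fixed, but the exponent issue above is not.
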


\begin{proof}

We begin by proving an initial weak estimate of the mixing of both $K_{\sigma}$ and $\hat{K}_{\sigma}$ on the mode $(-\infty,-2 \delta)$:

\begin{prop} \label{LemmaInitialModeMixingEstimate}
With notation as above, there exist some constants $0 < c_{1}, c_{2},C_{1}, C_{2} < \infty$ so that
\be   \label{IneqMixingBulkNoHat}
\sup_{-\sigma^{-10} < x,y < -2\delta} \| K_{\sigma}^{S}(x,\cdot) -  K_{\sigma}^{S}(y,\cdot) \|_{\mathrm{TV}} \leq 2 \,C_{2} e^{-c_{2} \sigma^{-1}}
\ee
and
\be  \label{IneqMixingBulkHat}
\sup_{-\sigma^{-10} < x,y < -2\delta} \| \hat{K}_{\sigma}^{S}(x,\cdot) -  \hat{K}_{\sigma}^{S}(y,\cdot) \|_{\mathrm{TV}} \leq 2 \,C_{2} e^{-c_{2} \sigma^{-1}}
\ee
for $S > C_{1} \sigma^{-c_{1}}$.
\end{prop}

\begin{proof}
Our argument is obtained with the coupling method. Let $\{X_{t}\}_{t \in \mathbb{N}}$ be a Markov chain with transition kernel $K_{\sigma}$ and starting point $X_{1} = x$. Let $\{Y_{t}\}_{t \in \mathbb{N}}$ be a Markov chain with transition kernel $Q_{\sigma,\delta}$ and starting point $Y_{1} = x$. Finally, let $\{Z_{t}\}_{t \in \mathbb{N}}$ with transition kernel $Q_{\sigma,\delta}$, started at stationarity with $Z_{1} \sim \mu_{\sigma,\delta}$. 

We now define a coupling of $\{X_{t},Y_{t},Z_{t}\}_{t \in \mathbb{N}}$. We begin by coupling $\{X_{t}, Y_{t}\}_{t \in \mathbb{N}}$ according to the \textit{identity coupling} given near the start of this section. Under this coupling, define the random times
\be 
\eta_{1} &= \inf \{ t \in \mathbb{N} \, : \, X_{t} > -\delta \} \\
\eta_{2} &= \inf \{ t \in \mathbb{N} \, : \, \frac{1}{2} p_{t}^{2} - \log(f_{\sigma}(X_{t})) > f_{\sigma}(- \frac{ \delta}{2}) \} \\
\eta_{3} &= \inf \{ t \in \mathbb{N} \, : \, X_{t} < -\sigma^{-10} \} \\
\eta &= \min(\eta_{1},\eta_{2}, \eta_{3}).
\ee 

We observe that, under this coupling, $X_{t} = Y_{t}$ for all $t \leq \eta$. Next, we extend this coupling of $\{X_{t},Y_{t}\}_{t \in \mathbb{N}}$ to a coupling of $\{X_{t},Y_{t},Z_{t}\}_{t \in \mathbb{N}}$. 

By Theorem 1 of \cite{mangoubi2017concave}, there exist $c_{1} = c_{1}(\delta) > 0$, $C_{1} = C_{1}(\delta) > 0$ so that 
\be  \label{IneqMixingModeCitePrev}
\| \mathcal{L}(Y_{T}) - \mathcal{L}(Z_{T}) \|_{\mathrm{TV}} \leq e^{-  \sigma^{-4}}
\ee 
for all $T > C_{1} \sigma^{-c_{1}}$. Thus, for fixed $T > C_{1} \sigma^{-c_{1}}$, it is possible to couple $\{Y_{t}\}_{t=1}^{T}$, $\{Z_{t}\}_{t=1}^{T}$ so that 
\be \label{IneqCoupVStrong}
\P[Y_{T} = Z_{T}] \geq 1 - e^{-  \sigma^{-4}}.
\ee 
We fix $T = \lceil C_{1} \sigma^{-c_{1}} \rceil$ and couple $\{Z_{t}\}_{t=1}^{T}$ to $\{Y_{t}\}_{t =1}^{T}$ in this way; for $t > T$ we allow $\{Z_{t}\}_{t \geq T+1}$, $\{Y_{t}\}_{t \geq T+1}$ to evolve independently. 

By Equation \eqref{IneqCoupVStrong}, we have under this coupling that 
\be \label{IneqSimpCoupTail1}
\P[\{X_{T} = Z_{T}\} \cap \{X_{T} \in (-\sigma^{-10}, - 2 \delta)\}] &\geq \P[X_{T} = Z_{t}] - \P[X_{T} \notin (-\sigma^{-10}, - 2 \delta)] \\
&\geq \P[X_{T} = Z_{t}] - \P[\eta < T] \\
&\geq  1  - e^{-\sigma^{-4}} - \P[\eta < T].
\ee
By the drift condition \eqref{IneqLyapMulti} and Markov's inequality, there exist constants $c_{2}' = c_{2}'(\delta) > 0$,  and $C_{2}' = C_{2}'(\delta) > 0$ so that 
\be \label{IneqSimpCoupTail2}
\P[ \eta_{1} < T] \leq C_{2}' e^{-c_{2}' \sigma^{-1}}.
\ee
By standard Gaussian tail inequalities, there exist constants $c_{2}'' = c_{2}''(\delta) > 0$,  and $C_{2}'' = C_{2}''(\delta) > 0$ so that 
\be \label{IneqSimpCoupTail3}
\P[ \eta_{2} < \min(\eta_{1},T)] \leq C_{2}'' e^{-c_{2}'' \sigma^{-2}}.
\ee

Thus, combining Inequalities \eqref{IneqSimpCoupTail1}, \eqref{IneqSimpCoupTail2} and \eqref{IneqSimpCoupTail3}, there exist constants $c_{2} = c_{2}(\delta) > 0$,  and $C_{2} = C_{2}(\delta) > 0$ so that
\be 
\P[\{X_{T} = Z_{T}\} \cap \{X_{T} \in (-\sigma^{-10}, - 2 \delta)] \geq 1 - C_{2} e^{-c_{2} \sigma^{-1}}.
\ee

This completes the proof of Inequality \eqref{IneqMixingBulkNoHat}. Inequality \eqref{IneqMixingBulkHat} follows from the same argument.
\end{proof}

Note that, although the total variation distance in Inequalities \eqref{IneqMixingBulkNoHat}, \eqref{IneqMixingBulkHat} is very small, it is still much larger than $\Phi_{\sigma}$ and so Proposition \ref{LemmaInitialModeMixingEstimate} does not imply Lemma \ref{ThmMulti1AppMainMixingMode}.  By iteratively applying this bound and paying more attention to small error terms, we find the improved estimate:

\begin{prop} \label{LemmaIterateHatMixing}
There exist constants $0 < c_{6}, c_{7}, C_{6}, C_{7} < \infty$ so that 
\be  \label{EqVeryStrongHatMixing2}
\sup_{-\sigma^{-9} < x,y < 0} \| \hat{K}_{\sigma}^{S}(x,\cdot) -  \hat{K}_{\sigma}^{S}(y,\cdot) \|_{\mathrm{TV}} \leq C_{6} e^{-c_{6} \sigma^{-5}}
\ee
for $S = \lceil C_{7} \sigma^{-c_{7}} \rceil$. 
\end{prop}

\begin{proof}
We initially fix $0 < \delta < \frac{1}{20}$ and consider pairs of points $-\sigma^{-0} < x,y < -2\delta$. To improve on the bound in Proposition \ref{LemmaInitialModeMixingEstimate}, we must control what can occur when coupling does not happen quickly. There are two possibilities to control: the possibility that $X_{t}$ goes above $-2 \delta$, and the possibility that $X_{t}$ goes below $-\sigma^{-10}$. The latter is easier to control; by the Lyapunov Inequality \eqref{IneqLyapMulti} and Markov's inequality, for all $\delta > 0$ there exist constants $c_{3} = c_{3}(\delta), C_{3} = C_{3}(\delta) > 0$ so that  
\be  \label{IneqVeryLowExcursionsVeryUnlikely}
\sup_{|X_{1}| \leq \sigma^{-\beta}} \P[\min_{1 \leq t \leq e^{\sigma^{-\beta}}} X_{t} < -\sigma^{-\beta - \delta}] &\leq e^{\sigma^{-\beta}} \, \sup_{|X_{1}| \leq \sigma^{-\beta}} \sup_{1 \leq t \leq e^{\sigma^{-\beta}}} \frac{\E[e^{|X_{t}|}]}{e^{\sigma^{-\beta - \delta}}} \\
&\leq e^{\sigma^{-\beta}} \left( \frac{e^{\sigma^{-\beta}} + \alpha^{-1} C_{\sigma}'}{e^{\sigma^{-\beta - \delta}}} \right) \\
&\leq C_{3} e^{-c_{3} \sigma^{-\beta - \delta}}
\ee
uniformly in $\beta \geq 1$.

The possibility that $X_{t}$ goes above $-2 \delta$ cannot be controlled in the same way, because it does not have negligible probability on the time scale of interest. Instead, we use the fact that $X_{t}$ will generally exit the interval $(-2 \delta, 0)$ fairly quickly, and that it often returns to the interval $(-\infty, - 2 \delta)$ when it does so. 

We now prove that the HMC Markov chain exits $(-2\delta, 0)$ quickly. To do so, we reset some of our notation in order to define a new coupling. We let $\{p_{t}\}_{t \geq 0}$ be a sequence of i.i.d. $\mathcal{N}(0,\sigma^{2})$ random variables. We let $\{X_{t}\}_{t \in \mathbb{N}}$ now denote a Markov chain evolving according to the kernel $K_{\sigma}$ defined in Algorithm \ref{alg:isotropic_HMC}, with this choice of update variable $\{p_{t}\}_{t \geq 0}$ in Step 3 of the algorithm, and some starting point $X_{1} = x \in (-2\delta,0)$. We let $\{ \hat{X}_{t}\}_{t \geq 0}$ be a Markov chain with transition kernel $\hat{K}_{\sigma}$  and coupled to $\{X_{t}\}_{t \geq 0}$ by the identity coupling. Finally, redefine  $\{Y_{t}\}_{t \in \mathbb{N}}$ to be the Markov chain: 
\be 
Y_{t} = X_{0} + \sum_{s=1}^{t-1} p_{s}.
\ee

Let $(\xi_{t}^{(1)}(s), \xi_{t}^{(2)}(s))_{s \geq 0}$ be solutions to Hamilton's equations \eqref{EqHamEq} with starting points $\xi_{t}^{(1)}(0) = p_{t}, \xi_{t}^{(2)}(0) = X_{t}$. Then define the stopping times:

\be 
\tau^{(1)} &= \min \{t  \in \mathbb{N} \, : \, \{\xi_{t}^{(2)}(s)\}_{s=0}^{\sigma} \cap (-2 \delta,0)^{c} \neq \emptyset \} \\
\tilde{\tau}^{(1)} &= \min \{t \in \mathbb{N} \, : \, X_{t} \notin (-2 \delta, 0) \}, \, \tau^{(2)} = \min \{t \in \mathbb{N} \, : \, Y_{t} \notin (-2 \delta, 0) \}.
\ee 
Roughly speaking, these are the first time that any of the Hamiltonian paths followed in the construction of $X_{t}$, or the first time that $X_{t}$, $Y_{t}$ themselves, exits $(-2 \delta,0)$. Note that we always have $\tau^{(1)} \leq \tilde{\tau}^{(1)}$.

Under our coupling, we have that $X_{t}, \, \hat{X}_{t} \leq Y_{t}$ for all $0 \leq t \leq \tau^{(1)}$. This immediately implies that 
\be 
\tau^{(2)} \leq \tilde{\tau}^{(1)}.
\ee  
By a straightforward calculation for the simple random walk $\{Y_{t}\}_{t \in \mathbb{N}}$, this implies the existence of constants $c_{4}' = c_{4}'(\delta) > 0$, $C_{4}' = C_{4}'(\delta) > 0$ so that 
\be \label{IneqLeavingIntermodalIneq}
\P[ \tilde{\tau}^{(1)} < C_{4}' \sigma^{-c_{4}'}] \geq \P[ \tau^{(2)} < C_{4}' \sigma^{-c_{4}'}] \geq \frac{999}{1000}.
\ee
By a direct calculation, for all $\epsilon > 0$,
\be 
\sup_{X_{0} = x \in (-2 \delta, 0)} \P[\sup_{0 \leq s \leq \sigma} \xi_{1}^{(2)}(s) > X_{0} + \epsilon] = e^{-\Omega(\sigma^{-2})}.
\ee 
Combining this observation with an application of Inequality \eqref{IneqLeavingIntermodalIneq} for a slightly larger value of $\delta$, we conclude that there exist constants $c_{4}'' = c_{4}''(\delta) > 0$, $C_{4}'' = C_{4}''(\delta) > 0$ so that 
\be \label{IneqLeavingIntermodalIneq2}
\P[ \tau^{(1)} < C_{4}'' \sigma^{-c_{4}''}] \geq \frac{999}{1000}.
\ee
Let $c_{4} = \max(c_{4}', c_{4}'')$ and $C_{4} = \max(C_{4}', C_{4}'')$. Let $L = 100 \lceil C_{4} \sigma^{-c_{4}} \rceil$, and define the intervals $I_{k} = \{ (L-1)k+1, \ldots, Lk \}$ for $k \in \mathbb{N}$. Using the Markov property to apply Inequality \eqref{IneqLeavingIntermodalIneq2}  iteratively over the intervals  $I_{1},\ldots,I_{\lceil \sigma^{-10} \rceil}$, this implies that for $c_{5} = 10 c_{4}$ and some constant $C_{5} = C_{5}(\delta) > 0$,
\be  \label{IneqEscapeSmallSet}
\P[ \tau^{(1)} < C_{5} \sigma^{-c_{5}}], \, \P[ \hat{\tau}^{(1)} < C_{5} \sigma^{-c_{5}}] \geq 1 - e^{-\sigma^{-10}}.
\ee

Combining the bound  \eqref{IneqMixingBulkHat} on the mixing of $\hat{K}_{\sigma}$ on $(-\sigma^{-10},-2\delta)$ with the bound \eqref{IneqVeryLowExcursionsVeryUnlikely} on the possibility of excursions below $-\sigma^{-10}$ and the bound \eqref{IneqEscapeSmallSet} on the length of excursions above $-2 \delta$ implies that 
\be  \label{EqVeryStrongHatMixing2}
\sup_{-\sigma^{-9} < x,y < -2 \delta} \| \hat{K}_{\sigma}^{S}(x,\cdot) -  \hat{K}_{\sigma}^{S}(y,\cdot) \|_{\mathrm{TV}} \leq C_{6} e^{-c_{6} \sigma^{-5}}
\ee
for some constants  $0 < c_{6}, c_{7}, C_{6}, C_{7} < \infty$ that depend only on $\delta$ and running time $S = \lceil C_{7} \sigma^{-c_{7}} \rceil$. Fixing \textit{e.g.} $\delta = 0.01$ and combining this with the bound \eqref{IneqEscapeSmallSet} on the length of excursions above $-2 \delta$ completes the proof of the proposition for all starting points.
 \end{proof}

This completes the proof of the Lemma. 
\end{proof}

Next, we have:

\begin{lemma} 
With notation as above, Parts \textbf{(3-4)} of Assumptions \ref{AssumptionsMeta1} and Parts \textbf{(1-4)} of Assumptions \ref{AssumptionsMeta2} hold.
\end{lemma}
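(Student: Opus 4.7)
The plan is to verify all the remaining assumptions by exploiting three tools that have already been established: the Lyapunov bound of Lemma \ref{LemmaLyapunovEx}, the symmetry of the target distribution $\pi_\sigma$ about $0$, and the exponentially small stationary mass of the tail regions. First, by the symmetry $f_\sigma(x) = f_\sigma(-x)$, it suffices to check Part (3) and Part (4) of Assumptions \ref{AssumptionsMeta1} on $S^{(1)} = (-\infty,0)$; the corresponding bounds on $S^{(2)}$ then follow automatically, which will also immediately give Part (1) of Assumptions \ref{AssumptionsMeta2} since the remaining pieces of Assumptions \ref{AssumptionsMeta1} have already been established in Lemma \ref{ThmMulti1AppMainMixingMode}.

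For Part (3) of Assumptions \ref{AssumptionsMeta1}, starting from $x \in \cov_\beta^{(1)} \setminus \gd_\beta^{(1)} = (-\sigma^{-10}, -\sigma^{-9}]$, I will iterate the Lyapunov inequality \eqref{IneqLyapMulti} to show that $\E[V_\sigma(X_t) \mid X_1 = x] \leq (1-\alpha)^{t-1} V_\sigma(x) + C/\alpha$. Since $V_\sigma(x) \leq e^{\sigma^{-11}}$ for $x \in \cov_\beta^{(1)}$, a polynomial number of steps $r_2(\sigma) = \sigma^{-20}$ suffices to drive $\E[V_\sigma]$ down to $O(1)$, and then Markov's inequality together with standard hitting-time arguments (as in Proposition \ref{LemmaInitialModeMixingEstimate}) give an escape probability bounded by $\beta^{-2}\Phi_\beta(S)$. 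For Part (4), starting from $x \in \gd_\beta^{(1)}$, I need $\tau_{x, \cov_\beta^{(1),c}} \geq r_1(\sigma) + r_2(\sigma)$ with probability at least $1 - \Phi_\sigma^4$. Since $V_\sigma$ at the boundary $\{-\sigma^{-10}\}$ equals $e^{\sigma^{-11}-\sigma^{-1}}$ while $\Phi_\sigma^{-4} = e^{2\sigma^{-2}+O(1)}$, applying Markov's inequality to $V_\sigma(X_t)$ along with a union bound over the polynomial time horizon gives the required estimate because $e^{\sigma^{-11}}/e^{2\sigma^{-2}}$ is astronomically large.

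For Parts (2) and (3) of Assumptions \ref{AssumptionsMeta2}, Part (2) was already established in Lemma \ref{LemmaLyapunovEx}; the containment conditions \eqref{IneqLyapContain} follow with $m$ and $M$ any constants surrounding the modes $\pm 1$, and the Lyapunov inequality \eqref{IneqLyapMain} follows from \eqref{IneqLyapMulti} (choosing privileged points $s_1 = -1$ and $s_2 = 1$). For Part (3), I want to extend the Part (4) bound of Assumptions \ref{AssumptionsMeta1} to a time horizon of $\Phi_{\min}^{-2} = e^{\sigma^{-2}+O(1)}$; since $V_\sigma$ on the covering region is bounded by $e^{\sigma^{-11}}$, and $\Phi_\sigma^{-4} = e^{2\sigma^{-2}+O(1)} \ll e^{\sigma^{-11}}$, the same Markov-plus-union-bound argument survives.

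The most delicate step will be Part (4) of Assumptions \ref{AssumptionsMeta2}, non-periodicity. The lower bound $Q_\sigma(x, S^{(i)}) \geq c$ for $x \in S^{(i)}$ is straightforward: for $x$ in the bulk of a mode the HMC trajectory over time $T_\sigma = \sigma$ moves at most $O(\sigma \cdot \|P\|) = O(\sigma)$ on average, so with probability bounded below the chain stays on the same side of $0$; for $x$ deep in the tail of $S^{(i)}$ the strong concavity of $-\log f_\sigma$ and Theorem 7 of \cite{mangoubi2017concave} give a restoring force that keeps the trajectory on the same side. The subtler requirement is \eqref{IneqNP2}, namely that a transition from $S^{(1)}$ to $S^{(2)}$ almost never lands in $S^{(2)} \setminus \gd_\beta^{(2)} = [\sigma^{-9}, \infty)$. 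I would prove this by combining Theorem \ref{thm:1} with the energy conservation law: a trajectory starting in $S^{(1)}$ that ends at position $q \geq \sigma^{-9}$ requires initial momentum with magnitude at least $\Omega(\sigma^{-10})$ (to traverse a distance $\sigma^{-9}$ against the restoring potential in time $\sigma$, or alternatively by the pointwise energy bound $\tfrac{1}{2}p^2 \geq -\log f_\sigma(q) + O(1)$ at the endpoint), so the Gaussian tail of $P$ kills this event with probability $e^{-\Omega(\sigma^{-20})} \ll \Phi_\sigma^4$. This energy-conservation argument is where careful bookkeeping of the Hamiltonian trajectory is needed; the remaining verifications are all straightforward drift-and-tail estimates.
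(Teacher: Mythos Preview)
Your approach is essentially the same as the paper's: iterate the Lyapunov inequality \eqref{IneqLyapMulti}, apply Markov's inequality, and invoke the symmetry $f_\sigma(x)=f_\sigma(-x)$ for Part \textbf{(1)} of Assumptions \ref{AssumptionsMeta2}. The paper's proof is almost a one-liner---it writes the iterated Lyapunov bound $\E[V_\sigma(X_t)]\le(1-\alpha)^t V_\sigma(X_0)+C/\alpha$ and then says Parts \textbf{(3--4)} of Assumptions \ref{AssumptionsMeta1} and Parts \textbf{(2--3)} of Assumptions \ref{AssumptionsMeta2} follow by Markov, exactly as you outline.

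The only substantive difference is your treatment of Inequality \eqref{IneqNP2}. The paper disposes of it in one line (``follows immediately from the single-step Lyapunov condition \eqref{IneqLyapMulti} and Markov's inequality''), whereas you propose an energy-conservation / kinematic argument. Your instinct that this step has actual content is well-placed: the single-step Lyapunov bound $\P(V_\sigma(X_1)\ge L)\le (V_\sigma(x)+C)/L$ becomes vacuous when $x$ is itself far in the tail of $S^{(1)}$ (since then $V_\sigma(x)$ is comparable to or larger than $L$), so some additional argument is needed for such $x$. Your energy-conservation bound has the same defect in raw form---if $q_0$ is very negative then $-\log f_\sigma(q_0)$ dominates and the inequality $\tfrac12 p_0^2\ge -\log f_\sigma(q_T)+\log f_\sigma(q_0)$ gives nothing. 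Your kinematic phrasing (``traverse a distance $\sigma^{-9}$ in time $\sigma$, against the restoring force'') is the right fix and works uniformly: in the near-quadratic well the trajectory in time $T_\sigma=\sigma$ is $q(\sigma)\approx -1+(q_0+1)\cos(1)+p_0\sigma\sin(1)$, so reaching $q(\sigma)\ge\sigma^{-9}$ from any $q_0<0$ forces $|p_0|$ to be at least polynomially large in $\sigma^{-1}$, and the Gaussian tail then gives a bound far smaller than $\Phi_{\min}^4$. Your invocation of Theorem \ref{thm:1} here is unnecessary---the conductance formula plays no role in this one-step transition estimate.
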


\begin{proof}
 Parts \textbf{(3-4)} of Assumptions \ref{AssumptionsMeta1} and Parts \textbf{(2-3)} of Assumptions \ref{AssumptionsMeta2}  follow almost immediately from the Lyapunov condition \eqref{IneqLyapMulti}. In particular, consider a copy of the chain $\{X_{t}\}_{t \geq 0}$ started at any point $x$. Iteratively applying \eqref{IneqLyapMulti},
\be 
\E[V_{\sigma}(X_{t}) \textbf{1}_{\tau_{x,U}}] \leq \E[(1-\alpha)V_{\sigma}(X_{t-1}) + C_{\sigma}] \leq \ldots \leq (1 - \alpha)^{t} V_{\sigma}(X_{0}) + \frac{C_{\sigma}}{\alpha}. 
\ee 
Applying Markov's inequality with an appropriate choice of $x$ immediately gives Parts \textbf{(3-4)} of Assumptions \ref{AssumptionsMeta1} and Part \textbf{(3)} of Assumptions \ref{AssumptionsMeta2}. Part \textbf{(3)} of Assumptions \ref{AssumptionsMeta2} is of course implied by  \eqref{IneqLyapMulti} directly.

Part \textbf{(1)} of Assumptions \ref{AssumptionsMeta2} holds from the symmetry of the situation and the fact that we have proved Assumption \ref{AssumptionsMeta1} holds.

For Part \textbf{(4)} of Assumption \ref{AssumptionsMeta2}, Inequality \eqref{IneqNP1} is clear. Inequality \eqref{IneqNP2} follows immediately from the single-step Lyapunov condition \eqref{IneqLyapMulti} and Markov's inequality. 
\end{proof}

Since we have verified all the assumptions of Lemma \ref{LemmaMeta1} and \ref{LemmaMeta2}, applying them completes the proof of Theorem \ref{ThmHmcMultimodal}.

\subsection{Proof of Theorem \ref{ThmHmcDegenerate}}

We begin by writing down an exact expression for the transition kernel of interest. Recall that $K_{\sigma}$ is the transition kernel for a Markov chain in $\mathbb{R}$, even though the original problem is about a Markov chain in $\mathbb{R}^{2}$. By a small abuse of notation, write $K_{\sigma}(x,y)$ for the density of $K_{\sigma}(x,\cdot)$ at $y$. Next, fix $p,q \in \mathbb{R}$, and let $(p(t),q(t))$ be a solution to Hamilton's equations with potential $U(p,q) = \frac{1}{2} p^{2} + \frac{1}{2} q^{2}$ and initial conditions $q(0) = q$, $p(0) = p$. $(p(t),q(t))$ solve the coupled equations
\be
q'(t) = p(t), \qquad p'(t) = - q(t).
\ee
These equations are well-known to have solutions of the form
\be
q(t) &= A \cos(t) + B \sin(t) \\
p(t) &= C \cos(t) + D \sin(t);
\ee
applying the initial conditions to compute $A,B,C,D$, we find that the solution at time $t = T_{\sigma}$ is
\be \label{EqExpHarmonic}
q(T_{\sigma}) = q \, \cos(T_{\sigma}) + p \, \sin(T_{\sigma}).\\
\ee
Thus, for fixed $q,y \in \mathbb{R}$ and $0 < T_{\sigma} < 1$, we have
\be 
\{p \in \mathbb{R} \, : \, q(T_{\sigma}) = y \} = \{ \frac{y - q \cos(T_{\sigma})}{\sin(T_{\sigma})} \}.
\ee
Thus, for fixed $x,y \in \mathbb{R}$, we have that
\be  \label{EqExactDensUni}
K_{\sigma}(x,y) = \frac{1}{\sin(T_{\sigma})} \frac{1}{\sqrt{2 \pi}} e^{-\frac{1}{2} (\frac{y - x \cos(T_{\sigma})}{\sin(T_{\sigma})})^{2}} \\
\ee
is the density of a standard normal with mean $x \cos(\sigma)$ and variance $\sin(\sigma)^{2}$.

Next, we prove Inequality \eqref{IneqRelDeg} by calculating $\|K_{\sigma} f \|_{L_2(\pi_{\sigma})}$ for the test function $f(x) = x$. By Equation \eqref{EqExactDensUni}, we can exactly calculate

\be 
(K_{\sigma} f)(x) = \int_{y} K_{\sigma}(x,y) \, y dy = x \cos(\sigma).
\ee

Thus, 
\be
\rho_{\sigma} &\leq 1- \frac{ \| K_{\sigma} f\|_{L_{2}(\pi)}} { \| f \|_{L_{2}(\pi)}} \\
&= 1-\frac{\sqrt{ \int_{x} \frac{x^{2} \cos(T_{\sigma})^{2}}{\sqrt{2 \pi}} e^{-\frac{1}{2} x^{2}} dx}} {\sqrt{ \int_{x} \frac{x^{2} }{\sqrt{2 \pi}} e^{-\frac{1}{2} x^{2}} dx}} \\
&= 1- \cos(T_{\sigma})= T_{\sigma}^{2} + O(T_{\sigma}^{4}). 
\ee
This immediately implies Inequality \eqref{IneqRelDeg}.

Finally, we prove Inequality \eqref{EqMultiCheegDeg}.  By Theorem \ref{thm:1}, 
\be
\Phi_{\sigma} &= \Phi^+ \, \mathbb{E}_\mathbb{Q}\bigg[\frac{1}{N_{\{0\}}} \cdot \mathbbm{1}\{N_{\{0\}} \, \mathrm{odd}\}\bigg] \bigg/ \pi_{\sigma}((-\infty,0)) \\
&=  T_{\sigma} \cdot \int_{\{0\}} f_{\sigma}(q) \mathrm{d}q \cdot \mathbb{E}_\mathbb{Q}\bigg[\frac{1}{N_{\{0\}}} \cdot \mathbbm{1}\{N_{\{ 0\}} \, \mathrm{odd}\}\bigg] \\
&= T_{\sigma}  \mathbb{E}_\mathbb{Q}\bigg[\frac{1}{N_{\{0\}}} \cdot \mathbbm{1}\{N_{\{0\}} \, \mathrm{odd}\}\bigg] \leq \sigma.
\ee
Taking logs, this immediately implies Inequality \eqref{EqMultiCheegDeg}.

\end{document}